\theoremstyle{plain}
\newtheorem{thm}{\protect\theoremname}
\theoremstyle{remark}
\newtheorem{rem}[thm]{\protect\remarkname}
\theoremstyle{remark}
\newtheorem*{rem*}{\protect\remarkname}
\theoremstyle{plain}
\newtheorem{lem}[thm]{\protect\lemmaname}
\theoremstyle{definition}
\newtheorem{defn}[thm]{\protect\definitionname}
\theoremstyle{plain}
\newtheorem{prop}[thm]{\protect\propositionname}
\theoremstyle{plain}
\newtheorem{cor}[thm]{\protect\corollaryname}
\newcommand{\lyxaddress}[1]{
	\par {\raggedright #1
	\vspace{1.4em}
	\noindent\par}
}
\providecommand{\corollaryname}{Corollary}
\providecommand{\definitionname}{Definition}
\providecommand{\lemmaname}{Lemma}
\providecommand{\propositionname}{Proposition}
\providecommand{\remarkname}{Remark}
\providecommand{\theoremname}{Theorem}
\begin{document}
\global\long\def\R{\mathbb{R}}%
\global\long\def\graph{\mathop{\mathrm{graph}}}%
\global\long\def\Oh{\mathcal{O}}%
\global\long\def\dist{\mathrm{dist}}%

\title{Smooth convergence to the enveloping cylinder for mean curvature flow
of complete graphical hypersurfaces}
\author{Wolfgang Maurer\thanks{funded by the Deutsche Forschungsgemeinschaft (DFG, German Research Foundation) Project number 336454636}}
\maketitle
\begin{abstract}
For a mean curvature flow of complete graphical hypersurfaces $M_{t}=\graph u(\cdot,t)$
defined over domains $\Omega_{t}$, the enveloping cylinder is $\partial\Omega_{t}\times\R$.
We prove the smooth convergence of $M_{t}-h\,e_{n+1}$ to the enveloping
cylinder under certain circumstances. Moreover, we give examples demonstrating
that there is no uniform curvature bound in terms of the inital curvature
and the geometry of $\Omega_{t}$. Furthermore, we provide an example
where the hypersurface increasingly oscillates towards infinity in
both space and time. It has unbounded curvature at all times and is
not smoothly asymptotic to the enveloping cylinder. We also prove
a relation between the initial spatial asymptotics at the boundary
and the temporal asymptotics of how the surface vanishes to infinity
for certain rates in the case $\Omega_{t}$ are balls.
\end{abstract}

\section{Introduction}

Mean curvature flow is the evolution of a hypersurface that moves
with normal velocity equal to the mean curvature. It is described
by a particularly appealing geometric evolution equation for the embedding
($\frac{\mathrm{d}}{\mathrm{d}t}X=\Delta_{t}X$), while it is also
the negative $L^{2}$-gradient flow of the area functional.

For hypersurfaces $M_{t}$ that are the graphs of function $u(\cdot,t)$,
they solve mean curvature flow if and only if $u$ solves the quasilinear
parabolic partial differential equation
\[
\dot{u}=\left(\delta^{ij}-\frac{u^{i}\,u^{j}}{1+|\nabla u|^{2}}\right)u_{ij}\,.
\]
(We make use of Einstein summation convention.) This equation has
been extensively studied. To name a few of these works, that are in
line with the present article, the author would like to mention \cite{Hui},
where the case of boundary values on a bounded domain is considered,
\cite{EH}, where the long-time existence of solutions for entire
graphs is proven, and \cite{SS}, where complete graphical hypersurfaces
are considered. In the case of complete graphs, the representing functions
$u(\cdot,t)$ are defined on open subsets $\Omega_{t}\subset\R^{n}$
and they diverge to infinity toward the boundary $\partial\Omega_{t}$
such that no boundary values occur and the graphs of $u(\cdot,t)$
are in fact complete hypersurfaces. The domains $\Omega_{t}$ must
be time dependent in this setting and it turns out that they, or respectively
their boundaries $\partial\Omega_{t}$, form a weak solution of mean
curvature flow. This can be understood by the following heuristic.
The hypersurfaces $\graph u(\cdot,t)$ are in some sense asymptotic
to the cylinder $\partial\Omega_{t}\times\R$. Therefore, mean curvature
flow is also expected for $\partial\Omega_{t}\times\R$, and hence
for $\partial\Omega_{t}$ because the $\R$-factor does not contribute
to the mean curvature or the evolution by mean curvature flow.

The main aim of this article is to prove that $\graph u(\cdot,t)$
is \emph{smoothly} asymptotic to the cylinder $\partial\Omega_{t}\times\R$
under certain assumptions. For initially bounded curvature and bounded
$\Omega_{0}$ the asymptotic is smooth for positive times $t>0$ as
long as $\partial\Omega_{t}$ is not singular. It seems to be difficult
to apply the method of proof, which relies on local graphical representations
and pseudolocality, past singularities. Therefore, we investigate
the case of noncollapsed mean curvature flow with a different method
(the curvature bound of \cite{HK}) that allows going beyond singularities.
So for the noncollapsed mean curvature flow of complete graphs the
smooth asymptotics holds generally (still assuming initially bounded
curvature).

If one drops the assumption of an initial curvature bound, the asymptotic
may not be smooth anymore. We provide an interesting example where
the asymptotic is non-smooth at any time and instead the graphical
hypersurfaces infinitely sheets towards the enveloping cylinder. For
the purpose of the construction we explicitely construct a barrier
function which is defined over a shrinking annulus. This barrier allows
for an estimate of $u$ in terms of the value of $u$ on a surrounding
annulus earlier in time. Using this estimate we can prove that there
are solutions $u$ over a shrinking ball $\Omega_{t}$ for which $\dot{u}(0,t)$
oscillates infinitely such that there are sequences $t_{k}\to T$
and $\tau_{k}\to T$ with $\dot{u}(0,t_{k})\to+\infty$ and $\dot{u}(0,\tau_{k})\to-\infty$.
The example shows that complicated kinds of singularities can appear
at infinity for complete graphical hypersurfaces.

The barrier can also be used to prove a relationship between the spatial
asymptotic of $u(x,0)$ as $x\to\partial\Omega_{0}$ and the temporal
asymptotic $u(0,t)$ as $t\to T$ for rotationally symmetric graphs.
In this context it is mandatory to mention \cite{IW,IWZ} where examples
of rotationally symmetric, complete, graphical surfaces with a continuous
spectrum of blow-up rates are constructed. The blow-up rates in their
examples were intimately connected to the spatial asymptotics, too.

The article is organized as follows. Firstly, we will review the mean
curvature flow of complete graphical hypersurfaces, because it is
central to this paper and to set up the notation. In Section \ref{sec curvaturebound},
we prove the smooth asymptotics to the enveloping cylinder up to the
first singularity for initially bounded curvature. A curvature bound
above some height is central to the proof. Despite this curvature
bound above a height, we include in this section a sequence of examples
demonstrating that there is no uniform curvature bound (independent
of height) depending only on the initial curvature bound and the geometry
of $\Omega_{t}$. In Section \ref{sec noncollComplGraph}, we prove
the smooth asymptotics for noncollapsed mean curvature flows of complete
graphs. The section also provides a construction establishing existence
for such flows. Section \ref{sec barrier} is devoted to the barrier
over an annulus and its applications. The appendix contains invaluable
information about normal graphs and noncollapsed mean curvature flow
as well as weak mean curvature flows.

The author thanks O.\ Schnürer for posing the question discussed
here and for his supervision of the authors PhD-thesis, where this
work originates from. The author is also very grateful for the conversations,
stimulations and for O.\ Schnürer's patience.

\section[Mean curvature flow without singularities]{Recapitulation of mean curvature flow without singularities}

\label{sec MCFwS} Since this paper builds on the ideas and results
of ``Mean curvature flow without singularities'' (\cite{SS}), it
is worthwhile to summarize the main points of \cite{SS}. This section
does not contain new results. (We follow \cite{SS}, or \cite{Mau1}
when we diverge from \cite{SS}.) Instead, it helps setting the notation
and allows us to shorten our exposition lateron when similar steps
as here are needed to be taken.

Mean curvature flow without singularities is mainly about the mean
curvature flow of complete graphical hypersurfaces. 
\begin{enumerate}
\item \textbf{Initial Data:} Let $\Omega_{0}\subset\R^{n}$ be open and
let $u_{0}\colon\Omega_{0}\to\R$ be a locally Lipschitz-continuous
function. We assume that there is a continuous extension $\overline{u}_{0}\colon\R^{n}\to\overline{\R}=[-\infty,\infty]$
of $u_{0}$ such that $\{x:-\infty<\overline{u}_{0}(x)<\infty\}=\Omega_{0}$
and $\overline{u}_{0}|_{\Omega_{0}}=u_{0}$ hold.
\item \textbf{Solution Data:} A \emph{mean curvature flow without singularities}
is a pair $(u,\Omega)$ of an relatively open subset $\Omega\subset\R^{n}\times[0,\infty)$
and a continuous function $u\colon\Omega\to\R$. The zero time slice
of $\Omega$ is supposed to be $\Omega_{0}$, in line with a consistent
notation $\Omega_{t}$ for the time slices ($\Omega=\bigcup_{t\ge0}\Omega_{t}\times\{t\}$).
Moreover, we suppose that $u(\cdot,0)=u_{0}$ holds. For this reason,
we call $(u_{0},\Omega_{0})$ the initial data for $(u,\Omega)$.

\textbf{Maximality condition:} We suppose that there exists a continuous
function $\overline{u}\colon\R^{n}\times[0,\infty)\to\overline{\R}$
such that $\{(x,t):-\infty<\overline{u}(x,t)<\infty\}=\Omega$ and
$u|_{\Omega}=u$ hold.

\textbf{Equation:} The function $u$ is supposed to be smooth and
to satisfy the equation of graphical mean curvature flow on $\Omega\setminus(\Omega_{0}\times\{0\})$,
i.e., 
\begin{equation}
\partial_{t}u=\left(\delta^{ij}-\frac{u^{i}\,u^{j}}{1+|Du|^{2}}\right)u_{ij}\;.\label{eq GMCFwS}
\end{equation}

\item \textbf{Hypersurfaces:} We denote by $M_{t}\coloneqq\graph u(\cdot,t)$
the graphical hypersurfaces that move by their mean curvature (locally
in a classical sense). 
\item \textbf{Shadow flow:} The family $(\Omega_{t})_{t\ge0}$ is called
the \emph{shadow flow}.
\end{enumerate}
\begin{thm}
\label{thm MCFwS main} For any such initial data $(u_{0},\Omega_{0})$,
there exists a corresponding mean curvature flow without singularities
$(u,\Omega)$. The shadow flow is a weak solution of mean curvature
flow in dimension $n-1$ in a sense explained below (Remark \ref{rem MCFwS}
(\ref{it MCFwS weak flow})).
\end{thm}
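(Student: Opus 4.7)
The plan is to construct $(u,\Omega)$ by approximation from interior Dirichlet problems and extract a limit by standard interior estimates for graphical mean curvature flow. First I would choose a sequence of smooth, relatively compact domains $\Omega_0^{(k)}\subset\subset\Omega_0^{(k+1)}\subset\subset\Omega_0$ exhausting $\Omega_0$, together with smooth approximations $u_0^{(k)}\colon\overline{\Omega_0^{(k)}}\to\R$ that approximate $u_0$ locally uniformly on $\Omega_0$ and whose boundary values are chosen in accordance with $\overline{u}_0$: on parts of $\partial\Omega_0^{(k)}$ close to points where $\overline{u}_0=+\infty$ they are pushed to $+k$, symmetrically for $-\infty$, and elsewhere they realize the appropriate finite limit. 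For every $k$ the Dirichlet problem theory for graphical mean curvature flow (\cite{Hui}) produces a smooth solution $u_k$ of \eqref{eq GMCFwS} on $\Omega_0^{(k)}\times[0,\infty)$ with these fixed boundary values.

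Second, I would pass to a limit. Interior gradient estimates together with the Ecker--Huisken interior curvature estimates (\cite{EH}) yield locally uniform $C^\infty$-bounds on $u_k$ on every compact subset of $\Omega_0\times(0,\infty)$ on which the sequence stays locally bounded in $C^0$. A standard comparison argument -- sliding $u_\ell$ vertically and applying the maximum principle against $u_k$ for $k\ge\ell$ -- shows that, for the chosen approximation, the sequences $u_k$ and $-u_k$ are monotone in the one-sided sense needed to define $\overline{u}(x,t)\coloneqq\lim_k u_k(x,t)\in\overline{\R}$. Setting $\Omega\coloneqq\{(x,t):\overline{u}(x,t)\in\R\}$ and $u\coloneqq\overline{u}|_{\Omega}$ produces the desired pair; smoothness of $u$ on $\Omega\setminus(\Omega_0\times\{0\})$ and the equation \eqref{eq GMCFwS} follow from $C^\infty_{\mathrm{loc}}$-convergence of a subsequence.

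Third, I would verify the initial condition $u(\cdot,0)=u_0$ on $\Omega_0$ and the maximality condition. The former follows from upper and lower barriers -- translates of $u_k$ and auxiliary graphs over small balls -- that pinch $u_k(x,t)$ between values close to $u_0(x)$ for small $t>0$. Continuity of $\overline{u}$ as a map into $\overline{\R}$ is the delicate point: one has to rule out that the graphs detach from the enveloping cylinder too early in time or that $\overline{u}$ jumps across the boundary of $\Omega$. This is handled by comparison with tall cylindrical barriers and by exploiting that $\overline{u}_0$ is already continuous into $\overline{\R}$ at $t=0$, so that the approximating boundary values propagate the correct infinite behavior for short positive times, and then for all times by iteration.

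Finally, to identify the shadow flow as a weak mean curvature flow in dimension $n-1$: each cylinder $\partial\Omega_0^{(k)}\times\R$ is smooth and, together with the cylinder $\partial\Omega_t^{(k)}\times\R$ obtained from the boundary of the solution domain of $u_k$, moves by mean curvature classically (after slight thickening, these are just graphical flows whose boundaries carry no extra curvature contribution from the $\R$-factor). Taking the Kuratowski limit as $k\to\infty$ and invoking the stability of the chosen weak notion of mean curvature flow (to be made precise in Remark~\ref{rem MCFwS}) yields that $\partial\Omega_t\times\R$, hence $\partial\Omega_t$, evolves weakly by mean curvature. The hardest step is the last combined with the maximality condition: both depend on the existence of matching barriers on either side of $\partial\Omega$ at points where $\overline{u}_0$ takes the value $\pm\infty$, and it is here that the careful design of the boundary values $u_0^{(k)}|_{\partial\Omega_0^{(k)}}$ pays off.
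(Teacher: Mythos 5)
Your overall scheme—approximate by solvable Dirichlet problems for \eqref{eq GMCFwS}, use interior estimates to pass to a locally smooth limit, then read off $\Omega$ as the finiteness set of a limit function $\overline{u}$—is the same general strategy the paper sketches (there the approximators are obtained by truncating $\overline{u}_{0}$ at height $\pm a$, mollifying, and solving on large balls with fixed boundary values). But two steps of your proposal have genuine gaps. First, the monotonicity you invoke to define $\overline{u}(x,t)=\lim_{k}u_{k}(x,t)$ is not available in this setting: the paper follows \cite{Mau1}, so $\overline{u}_{0}$ may attain both $+\infty$ and $-\infty$, and with boundary values pushed to $+k$ on one part of $\partial\Omega_{0}^{(k)}$ and to $-k$ on another (and with growing domains) the comparison principle gives no one-sided ordering of the $u_{k}$. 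More importantly, even granting a pointwise limit, you never establish what the definition of a mean curvature flow without singularities actually demands: that $\overline{u}$ is \emph{continuous} as a map into $\overline{\R}$ (the maximality condition). A monotone or subsequential pointwise limit of continuous functions need only be semicontinuous, and your appeal to ``tall cylindrical barriers and iteration'' is not an argument; this is precisely the point the paper's discussion addresses by requiring a quantified local equicontinuity of the approximators (an estimate at points where $|v_{k}|<a$, $|x|<R$) and a variant of the Arzel\`a--Ascoli theorem. Also, solving the Dirichlet problem on arbitrary smooth exhausting subdomains with boundary data blowing up like $\pm k$ needs boundary gradient estimates that general (non mean-convex) $\Omega_{0}^{(k)}$ do not provide; the paper's choice of balls and height-truncated data sidesteps this.

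Second, the shadow-flow step does not work as stated: in your construction the approximating solutions $u_{k}$ live on the \emph{fixed} spatial domains $\Omega_{0}^{(k)}$ with time-independent boundary values, so there are no evolving domains $\Omega_{t}^{(k)}$ and the cylinders $\partial\Omega_{0}^{(k)}\times\R$ certainly do not move by mean curvature; hence there is no family of classical flows whose Kuratowski limit you can take, and ``stability of the weak notion'' has nothing to act on. The shadow flow $(\Omega_{t})_{t}$ only appears in the limit as the finiteness set of $\overline{u}$, and identifying it as a weak (level-set/domain) flow in dimension $n-1$ is a separate argument, carried out in \cite{SS} and \cite{Mau1}; the paper explicitly declines to reprove it. So your proposal reproduces the paper's construction in outline, but the two load-bearing points—continuity of $\overline{u}$ into $\overline{\R}$ and the weak-flow property of $(\Omega_{t})_{t}$—are exactly the ones left unproved.
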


\begin{rem}
\label{rem MCFwS} \,
\begin{enumerate}
\item The maximality condition implies $|u(x,t)|\to\infty$ for $(x,t)\to\partial\Omega$.
($\partial\Omega$ denotes the relative boundary of $\Omega$ in $\R^{n}\times[0,\infty)$.)
In particular, the hypersurfaces $M_{t}$ are complete. Moreover,
the maximality condition implies that the solution is maximal in $t$;
stopping the flow at an arbitrary time may prevent the maximality
condition to hold.

The maximality condition is defined slightly differently in \cite{SS}.
Only positive and proper functions $u$ are considered there. We follow
\cite{Mau1} here, where these assumptions are dropped to some extent
and the maximality condition is adapted accordingly. 
\item Although $M_{t}$ is smooth, the formalism allows for changes of the
topology of $M_{t}$. Singularities of $\partial\Omega_{t}$ may be
interpreted as singularities of $M_{t}$ at infinity. 
\item \label{it MCFwS weak flow} In \cite{SS}, the shadow flow is advertised
as a weak solution. They underpin this by showing that (for their
solution) $(\Omega_{t})_{t\in[0,\infty)}$ coincides with the level-set
flow starting from $\Omega_{0}$ $\mathcal{H}^{n}$-almost everywhere
if the level-set flow is non-fattening.

In \cite{Mau1}, for an arbitrary solution $(u,\Omega)$ the shadow
flow is interpreted as a weak solution in the sense of a domain flow
(cf.\ Definition \ref{def ncMCF domain flow}). 

\end{enumerate}
\end{rem}

\begin{proof}[Discussion of the proof of Theorem \ref{thm MCFwS main}]
 One constructs an approximating sequence of functions $v_{k}\colon\R^{n}\times[0,\infty)\to\overline{\R}$.
It needs to satisfy the following form of local equicontinuity: For
any $a\in\R$, any $R\in\R$, and any $\varepsilon>0$ there is $\delta=\delta(a,R,\varepsilon)>0$
and an index $K=K(a,R,\varepsilon)\in\mathbb{N}$ such that for any
$k\ge K$ and any $(x,s),(y,t)\in\R^{n}\times[0,\infty)$ with $|x|<R$,
$|v_{k}(x,s)|<a$, and $|x-y|+|s-t|<\delta$ we have $|v_{k}(x,s)-v_{k}(y,t)|<\varepsilon$.
A variation on the Arzelà-Ascoli theorem then shows that a subsequence
of $(v_{k})_{k\in\mathbb{N}}$ converges pointwise to a continuous
function $\overline{u}\colon\R^{n}\times[0,\infty)\to\overline{\R}$.
We set $\Omega\coloneqq\{(x,t):-\infty<\overline{u}(x,t)<\infty\}$
and $u\coloneqq\overline{u}|_{\Omega}$. The convergence is locally
uniform on $\Omega$.

To be approximating, the sequence needs to satisfy $v_{k}(\cdot,0)\to\overline{u}_{0}$
pointwise, where $\overline{u}_{0}$ is the above extension of the
initial function such that $\overline{u}(\cdot,0)=\overline{u}_{0}$
holds. Furthermore, for any $a,R\in\R$ and any $t_{0}>0$, we suppose
that there is an index $K=K(a,R,t_{0})$ such that $v_{k}$ is a smooth
solution of (\ref{eq GMCFwS}) on the set $\{(x,t):|v_{k}(x,t)|<a,\,|x|<R,\,t_{0}<|t|<R\}$.
Moreover, we assume uniform estimates of the form $|\partial^{\alpha}v_{k}|\le C(\alpha,a,R,t_{0})$
for $k\ge K$ and for all multi-indices $\alpha$ on this set. The
subsequential convergence $v_{k}\to u$ is then locally smooth on
$\Omega\setminus(\Omega_{0}\times\{0\})$. As a consequence, $u$
solves (\ref{eq GMCFwS}) on $\Omega\setminus(\Omega_{0}\times\{0\})$
and is as asserted.

We have summarized how one obtains a solution from an approximating
sequence and what are sufficient conditions on this sequence. One
still has to find the approximating sequence and prove the local estimates
on the functions and its derivatives. For the approximations one could
either solve initial boundary value problems or use the flow of closed
hypersurfaces which have graphical parts. These are just two options
and one cannot say in general how to approximate; it depends on the
given problem.

For example, we can find an approximating sequence in the following
way. One considers for $a\in\R_{+}$ the functions $\varphi_{a}\colon\overline{\R}\to\R$
with 
\begin{equation}
\varphi_{a}(x)\coloneqq\begin{cases}
x & |x|<a,\\
a & x\ge a,\\
-a & x\le-a.
\end{cases}\label{eq MCFwS phi}
\end{equation}
Then one mollifies $\varphi_{a}\circ\overline{u}$ and restricts to
a ball. Solving graphical mean curvature flow on this ball with this
initial function and holding the boundary values fixed over time,
we find an approximator. (It can be extended to all of $\R^{n}$ by
an arbitrary value.) An approximating sequence is obtained by taking
increasingly larger $a$, finer mollification parameter, and larger
balls.

For the local estimates it is often possible to use the height function
to construct a cut-off function.

We don't say anything about the shadow flow here because the details
of that part of the proof are not important to us.
\end{proof}

\section[Curvature Bounds]{Curvature bounds for mean curvature flow of complete graphs}

\label{sec curvaturebound}

In this section we give hypothesis that render the heuristics rigorous
that for a mean curvature flow without singularities $(u,\Omega)$,
the graph of $u(\cdot,t)$ and $\partial\Omega_{t}\times\R$ behave
alike in sufficient heights. The first section, however, lowers our
expectations towards a curvature bound for mean curvature flow of
complete graphs.

\subsection{Impossibility of a controlled curvature bound}

In the title of this paragraph, one would need to explain what is
meant by ``controlled''. The issue is, what quantities is the curvature
bound allowed to depend on? We will only allow the geometry of $\Omega_{t}$
and the initial curvature here. We cannot exclude curvature bounds
that depend on more elaborate quantities.

We are going to construct a sequence of examples with uniformly bounded
initial curvature over a domain which is a shrinking ball. Nevertheless,
the curvature in these examples becomes arbitrarily large at some
fixed time before the ball disappears. This demonstrates that there
is no uniform curvature bound for the mean curvature flow without
singularities that depends only on the initial curvature and the geometry
of $\Omega_{t}$.

We start with $\Omega_{0}=B_{1}(0)\subset\R^{2}$. Every solution
$(u,\Omega)$ of the mean curvature without singularities has $\Omega$
resemble the shrinking ball solution, which exists up to the time
$t=\frac{1}{2}$ when $u$ vanishes to infinity. We start with an
initial $u_{0}$ that is sketched in Figure \ref{fig torus}. The
torus shown in that figure is assumed to close at time $t=\frac{1}{4}$
when moved by its mean curvature. The mean curvature flow without
singularities $M_{t}$ stays a smooth graph. So by the avoidance principle
$M_{t}$ must have already passed through the torus by the time the
torus closes. Now consider a sequence of initial surfaces $M_{0}$
that look essentially the same but have their tip further and further
downstairs. These surfaces can be constructed such that their curvature
is uniformly bounded among all of them. But at time $t=\frac{1}{4}$,
the tips must have passed through the torus. For this reason, the
speeds of the tips must become arbitrarily large because they have
to travel increasing distances in the same time. That means the mean
curvature becomes arbitrarily large. We conclude that controlling
the curvature of $M_{0}$ and the geometry of $\Omega_{t}$ is not
sufficient to deduce a curvature bound for $M_{t}$ at later times.

\begin{figure}
\begin{centering}
\includegraphics[height=5.5cm]{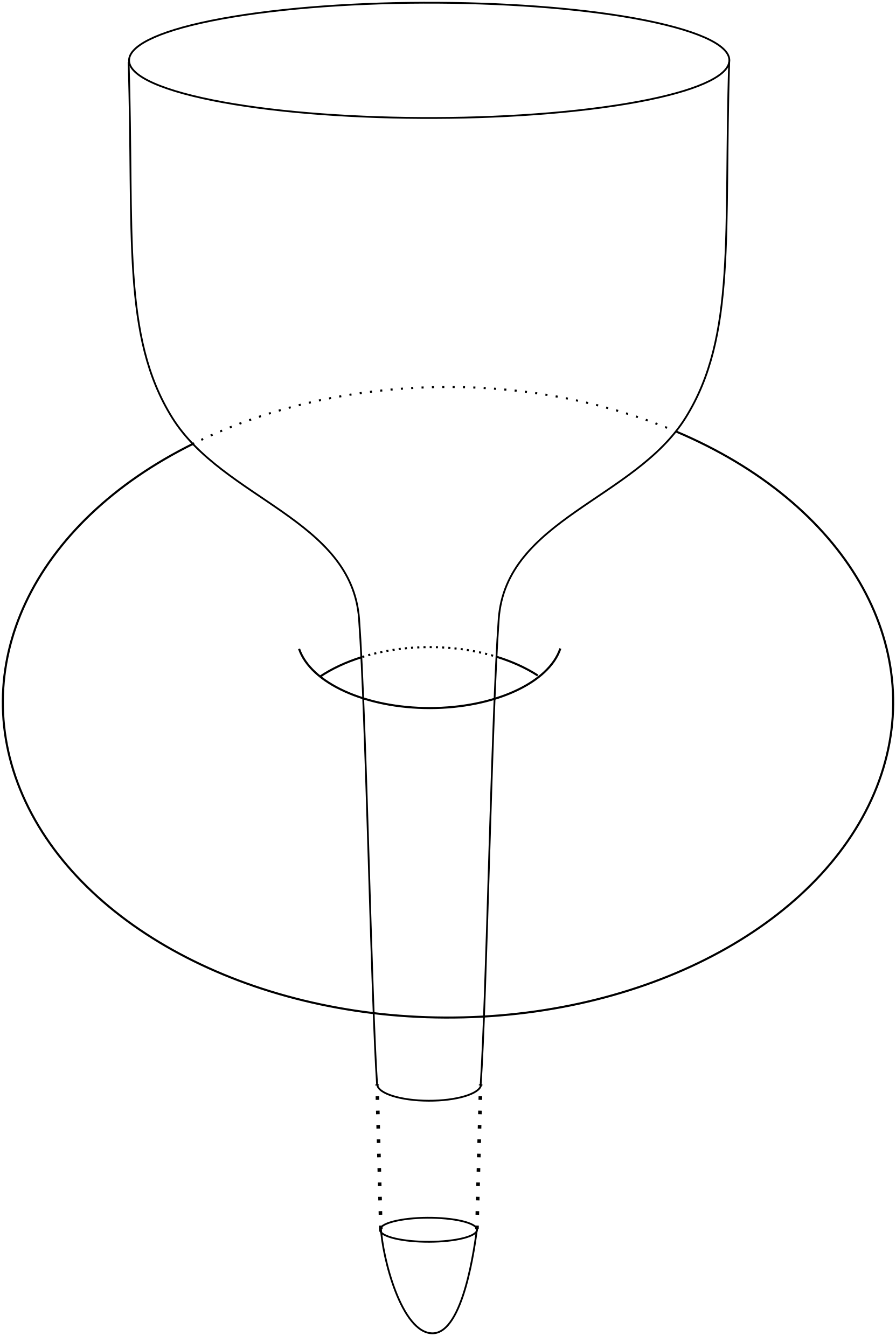}
\par\end{centering}
\caption{The torus closes and the solution must have passed through before.}
\label{fig torus}
\end{figure}

In this counterexample sequence the problem does not arise from infinity.
In fact, there everything is well-behaved. Furthermore, none of the
members of the sequence actually has unbounded curvature. Therefore,
we will keep the hypothesis of initially bounded curvature and of
controlled geometry of $\Omega_{t}$, but we will weaken the assertion.
We will only obtain an \emph{uncontrolled} curvature bound. In accordance
with the fact that in our counterexample sequence the problem does
not come from infinity, we obtain that under those hypothesis the
solution $M_{t}$ is smoothly asymptotic to the cylinder $\Omega_{t}\times\R$.

\subsection{Smooth asymptotics to the cylinder}

For interior curvature estimates we rely on the pseudolocality of
mean curvature flow. This remarkable property of mean curvature flow
means that one has control on solutions by means of local data; the
behavior of the solution far out is irrelevant for that control. This
is not to be confused with locality because a change far out will
have effects everywhere, but these are mild enough such that estimates
can still be derived from local data. This should be contrasted to
the heat equation, where one can realize any given temperature after
any given time at any given point in a room by making the walls of
the room sufficiently hot.

We use the interior curvature estimates from \cite{CY}, but one could
also use \cite{Lah}, which is based on the pseudolocality theorem
in \cite{INS}, in conjunction with \cite{EH}.
\begin{thm}
\label{thm cb Yin} There exist $0<\varepsilon\le1$ and $L>0$ with
the following property. Let $r>0$. Let $(M_{t})_{0\le t\le T}$ be
a smooth solution of mean curvature flow which is properly embedded
in $B_{r}(x_{0})$ with $x_{0}\in M_{0}$ and such that $M_{0}$ is
graphical in $B_{r}(x_{0})$ with gradient bounded by $L$. Then the
second fundamental form is estimated by 
\begin{equation}
|A(x,t)|^{2}\le\frac{1}{t}+(\varepsilon\,r)^{-2}\label{eq Yin 1}
\end{equation}
for $t\in(0,\min\{(\varepsilon r)^{2},T\}]$ and $x\in B_{\varepsilon r}(x_{0})\cap M_{t}$.

If, in addition, the second fundamental form of $M_{0}\cap B_{r}(x_{0})$
is bounded by $r^{-1}$, then 
\begin{equation}
|A(x,t)|^{2}\le(\varepsilon\,r)^{-2}\label{eq Yin 2}
\end{equation}
holds for $t\in[0,\min\{(\varepsilon r)^{2},T\}]$ and $x\in B_{\varepsilon r}(x_{0})\cap M_{t}$.
\end{thm}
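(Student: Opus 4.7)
The plan is to reduce the statement, via pseudolocality, to the Ecker--Huisken interior curvature estimate for graphical mean curvature flow. The heuristic is simple: pseudolocality guarantees that slightly Lipschitz initial data preserves a graphical structure for a short time, and once $M_{t}\cap B_{\varepsilon r}(x_{0})$ can be written as a graph, the Bernstein-type maximum principle argument of \cite{EH} delivers exactly an estimate of the form $1/t+1/r^{2}$.

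The first step is to rotate coordinates so that $T_{x_{0}}M_{0}$ is horizontal, then to fix the universal constant $L$ small enough that pseudolocality (as in \cite{INS} or in the form used in \cite{CY}) applies to $M_{0}\cap B_{r}(x_{0})$. This yields universal $\varepsilon_{1}\in(0,1]$ and $L_{1}>0$ such that for every $t\in[0,\min\{(\varepsilon_{1}r)^{2},T\}]$ the piece $M_{t}\cap B_{\varepsilon_{1}r/2}(x_{0})$ is the smooth graph of a function $u(\cdot,t)$ over a disk in $T_{x_{0}}M_{0}$, with $|\nabla u|\le L_{1}$. By parabolic rescaling it is enough to treat $r=1$, so all constants are universal.

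The second step is to apply the Ecker--Huisken interior curvature estimate to the graphical representation $u$. With a spatial cutoff $\eta$ supported in a disk of radius comparable to $\varepsilon_{1}r$ about the projection of $x_{0}$ and the gradient factor $v=\sqrt{1+|\nabla u|^{2}}$, the quantity $t\,\eta\,v^{2\alpha}|A|^{2}$ for suitable $\alpha$ satisfies a parabolic inequality whose maximum principle yields $|A|^{2}(x,t)\le C(t^{-1}+(\varepsilon_{1}r)^{-2})$ on a further concentric ball; shrinking $\varepsilon$ once more absorbs the constant $C$ and gives (\ref{eq Yin 1}). Under the stronger hypothesis $|A|(\cdot,0)\le r^{-1}$, the initial curvature bound renders the $t$-factor in the Bernstein quantity unnecessary, and the same argument applied to $\eta\,v^{2\alpha}|A|^{2}$ produces the time-uniform bound (\ref{eq Yin 2}).

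The main technical obstacle is matching the two ingredients cleanly: pseudolocality is usually phrased in terms of a local Lipschitz graph over a fixed hyperplane, whereas the interior curvature estimate needs a smooth graph on a Euclidean ball of prescribed size. Reconciling them requires quantitative control of the tilt of $T_{x_{0}}M_{0}$ and of the radius on which $M_{t}$ persists as a graph; this is exactly where the smallness of $L$ is consumed, and it is the reason the statement treats $L$ and $\varepsilon$ as fixed universal constants rather than as independent parameters.
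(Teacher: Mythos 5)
The paper does not actually prove this theorem; it quotes it from \cite{CY} (Chen--Yin), and the sentence immediately preceding the statement explicitly notes that ``one could also use \cite{Lah}, which is based on the pseudolocality theorem in \cite{INS}, in conjunction with \cite{EH}.'' Your sketch is precisely that alternative route: pseudolocality to keep the flow graphical with controlled gradient over a fixed fraction of $B_{r}(x_{0})$ for a parabolic time, then the Ecker--Huisken Bernstein-type maximum principle applied to $t\,\eta\,v^{2\alpha}|A|^{2}$ (resp.\ to $\eta\,v^{2\alpha}|A|^{2}$ when the initial curvature is bounded) to obtain the $1/t+1/r^{2}$ (resp.\ time-uniform) bound, followed by a further shrinking of $\varepsilon$ to absorb the dimensional constant. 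In outline this is sound and faithful to the remark in the text.

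What a complete proof along your lines must still supply, and what you correctly flag as the main obstacle, is the quantitative matching between the two ingredients: the radius on which \cite{Lah}/\cite{INS} keeps the flow graphical with a gradient bound that is small enough (and stable enough in time) to serve as input to the \cite{EH} cut-off argument, and the tilt of $T_{x_{0}}M_{0}$ relative to the fixed hyperplane used for the Lipschitz-graph hypothesis in pseudolocality. These are bookkeeping issues but they are exactly where the smallness of $L$ and the final choice of $\varepsilon$ are spent, and the statement is vacuous unless $L$ and $\varepsilon$ are genuinely universal, so this step cannot be waved away. By contrast, the cited source \cite{CY} obtains (\ref{eq Yin 1}) and (\ref{eq Yin 2}) by a different mechanism --- a point-picking argument combined with the local monotonicity formula, in the spirit of Perelman's pseudolocality --- rather than by routing through the Ecker--Huisken graph quantity. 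Both approaches produce the same scale-invariant form of the estimate because both are anchored to the parabolic scale $r$; the route you propose is the more elementary of the two and trades Chen--Yin's blow-up analysis for the more hands-on graphical maximum-principle computation.
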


\begin{thm}
\label{thm cb main} Let $(u,\Omega)$ be a solution of the mean curvature
flow without singularities. Suppose that $({\partial}{\Omega}_{t})_{0\le t\le T}$
is a smooth and compact solution of the mean curvature flow in dimension
$n-1$ for some $T>0$. Moreover, suppose $M_{0}$ has bounded curvature.

Then $M_{t}-h\,e_{n+1}$ converges for $0<t\le T$ in $C_{\mathrm{loc}}^{\infty}$
to the cylinder $\partial\Omega_{t}\times\R$ for $h\to\infty$. In
particular, $\sup\limits _{x\in M_{t}}|A|^{2}(x,t)$ is bounded for
$0\le t\le T$.
\end{thm}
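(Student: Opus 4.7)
The plan is to represent $M_t-h\,e_{n+1}$ as a normal graph over the enveloping cylinder $\partial\Omega_t\times\R$ by a function $\varphi_h$ and to prove $\|\varphi_h\|_{C^k(K)}\to 0$ as $h\to\infty$ on each compact set $K$. The argument splits naturally into initial (time-zero) smooth closeness to the cylinder, and propagation of this closeness forward in time by the pseudolocality estimate, Theorem \ref{thm cb Yin}.

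For the initial step I would exploit the hypothesis $|A_{M_0}|\le C_0$. This yields a uniform regularity scale $r_0=r_0(C_0)>0$: at each $p\in M_0$, $M_0$ is a smooth graph over its tangent plane $T_pM_0$ in a ball of radius $r_0$, with gradient and higher derivatives controlled by $C_0$. For a sequence $p_k=(x_k,u_0(x_k))$ with $u_0(x_k)\to\infty$, the tangent planes $T_{p_k}M_0$ must then approach vertical ones: otherwise the tangent-plane graph would project onto a ball around $x_k$ in $\R^n$ of definite size where $u_0$ is Lipschitz, forcing $u_0$ to stay bounded there, contradicting $u_0\to\infty$ as $x\to\partial\Omega_0$. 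Quantifying this via the smoothness and compactness of $\partial\Omega_0$, one shows that for $h$ large, $M_0-h\,e_{n+1}$ is a normal graph of a function $\varphi_{h,0}$ over a thick slab of $\partial\Omega_0\times\R$ around height $0$ with $\|\varphi_{h,0}\|_{C^k}\to 0$ as $h\to\infty$ for every $k$.

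To propagate in time I would use the smoothness and compactness of $(\partial\Omega_t)_{0\le t\le T}$, which provides a uniform regularity scale $r_{\mathrm{cyl}}>0$ in which the cylinder is a graph over its own tangent planes with gradient at most $L/2$, where $L$ is from Theorem \ref{thm cb Yin}. The $C^1$-closeness just established implies that near every point of height $\approx h$, $M_0$ is a graph over the same tangent plane with gradient $\le L$ on a ball of radius $r_{\mathrm{cyl}}/2$ and has curvature $\le (r_{\mathrm{cyl}}/2)^{-1}$ there. The second, initially-bounded-curvature form of Theorem \ref{thm cb Yin} then gives $|A_{M_t}|\le(\varepsilon r_{\mathrm{cyl}}/2)^{-1}$ on a uniform time interval $[0,t_1]$, $t_1=(\varepsilon r_{\mathrm{cyl}}/2)^2\wedge T$, for points of large height, uniformly in $h$. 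Standard interior derivative estimates for MCF upgrade this to uniform $C^\infty$ bounds. On the complementary compact part, local a priori estimates for the equation \eqref{eq GMCFwS} yield a curvature bound depending on $C_0$, the geometry of $\Omega_t$, and the chosen cutoff height, completing the global bound on $[0,t_1]$.

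Combining the initial $C^0$-closeness with the uniform $C^\infty$-bounds on $M_t-h\,e_{n+1}$, an Arzelà-Ascoli argument produces subsequential smooth convergence on $(0,t_1]$ to a smooth MCF whose initial datum agrees in $C^0$ with $\partial\Omega_0\times\R$. By uniqueness of smooth MCF from this smooth cylinder initial datum the limit is $\partial\Omega_t\times\R$, so the full family converges smoothly; to reach $[0,T]$ one iterates, with the curvature bound on $M_{t_1}$ playing the role of the initial bound for a restarted flow, noting that $r_{\mathrm{cyl}}$ stays controlled on all of $[0,T]$ so that finitely many pseudolocality steps suffice. The hard part is the quantitative initial reduction, i.e.\ converting $|A_{M_0}|\le C_0$ together with the maximality condition and the smoothness of $\partial\Omega_0$ into explicit $C^\infty$-closeness of $M_0-h\,e_{n+1}$ to the cylinder; this requires careful comparison of the tangent-plane graphical representation of $M_0$ with the normal-graphical representation over the cylinder, hinging on quantitative control of how vertical the tangent planes of $M_0$ become as heights go to infinity. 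A secondary, more technical, issue is arranging the iteration so that each step uses $r_{\mathrm{cyl}}$ rather than a potentially deteriorating curvature scale of $M_t$.
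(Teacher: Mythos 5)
Your overall strategy (pseudolocality at the scale of the cylinder, plus normal-graph representations over $\partial\Omega_t\times\R$) is the same as the paper's, but two of your key steps have genuine problems.

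First, the step you yourself single out as the hard part -- converting $|A_{M_0}|\le C_0$ and smoothness of $\partial\Omega_0$ into $\|\varphi_{h,0}\|_{C^k}\to0$ for \emph{every} $k$ -- is not just hard, it is false under the stated hypotheses. A bound on $|A_{M_0}|$ gives graphical representations at a controlled scale with controlled gradient, but no control on higher derivatives, and the second fundamental form of $M_0$ need not approach that of the cylinder as the height tends to infinity (e.g.\ a rotationally symmetric $u_0$ whose profile oscillates at unit curvature scale all the way up). This is exactly why the theorem asserts smooth convergence only for $t>0$. What is true, and what the paper uses, is only $C^{1}$-closeness: by Lemma \ref{lem close hypersurfaces}, bounded curvature plus lying in a $\delta$-tubular neighborhood of $Z_t=\partial\Omega_t\times\R$ forces a normal-graph representation with gradient $O(\sqrt{\delta})$. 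That $C^1$-smallness is all one needs to start Theorem \ref{thm cb Yin}; smoothness at positive times then comes from the Ecker--Huisken interior estimates for $|\nabla^kA|$ (valid once the curvature is bounded) together with interpolation, and the time-slice limits of $M_t-h\,e_{n+1}$ are identified with $Z_t$ simply because they are trapped in arbitrarily thin neighborhoods of $Z_t$ -- no uniqueness theorem for mean curvature flow with merely $C^0$ initial attainment is required.

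Second, your iteration to get from $[0,t_1]$ to $[0,T]$ is not actually set up, and as literally described it fails: if at each restart you feed "the curvature bound on $M_{t_1}$" into the second form \eqref{eq Yin 2} of Theorem \ref{thm cb Yin}, the admissible radius shrinks by a factor comparable to $\varepsilon$ per step, the time increments $(\varepsilon r_k)^2$ form a convergent geometric series, and finitely many steps need not reach $T$. You flag this ("use $r_{\mathrm{cyl}}$ rather than a deteriorating scale") but give no mechanism, and the mechanism is the actual content of the proof. The paper's resolution: fix a threshold $C_A=2(\varepsilon r)^{-1}$ and choose $\delta$ (via Corollary \ref{cor close hypersurfaces} and Proposition \ref{prop ntg}) so that \emph{any} hypersurface with $|A|\le C_A$ inside the $\delta$-neighborhood of $Z_t$ admits local graph representations of the \emph{same} radius $r$ with gradient $\le L$; choose the height $a$ so that $M_t\cap\{x^{n+1}>a\}$ lies in that $\delta$-neighborhood \emph{uniformly for all} $t\in[0,T]$ (this uses compactness of the smooth flow $(\partial\Omega_t)_{0\le t\le T}$ and is a point your proposal never invokes). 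Then one applies the first form \eqref{eq Yin 1}, which needs no initial curvature hypothesis, to get $|A|^2\le 2(\varepsilon r)^{-2}<C_A^2$ after each fixed time step of length $(\varepsilon r)^2$, losing only a fixed amount of height per step; finitely many steps of fixed length cover $[0,T]$, and below the final height the curvature is bounded by compactness. Without this threshold-and-regraphing mechanism (or an equivalent), your argument only proves the statement on a short initial time interval.
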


\begin{rem*}
Instead of bounded curvature, one can assume that $M_{0}$ admits
local graph representations of some radius $r$ with small Lipschitz
constant (cf.\ Definition \ref{def admit graph of radius}). Then
the curvature of $M_{t}$ is bounded for small positive times by Theorem
\ref{thm cb Yin}.
\end{rem*}
\begin{proof}[Proof of Theorem \ref{thm cb main}]
Firstly, we are going to prove a curvature estimate above some height.
The asymptotic behavior is then treated afterwards. The idea for the
curvature estimate is to alternately use Theorem \ref{thm cb Yin},
which provides a curvature estimate from a local graph representation,
and that a curvature estimate ensures graph representations, as we
are going to explain below. These two complement each other perfectly
if done carefully. Essential is that the curvature is below a certain
threshold $C_{A}$ so that we can ensure graph representations. Theorem
\ref{thm cb Yin} then supplies us with a curvature bound a small,
but fixed, time later. If this bound is below $C_{A}$, we can then
repeat the argument. The rough idea is depicted in Figure \ref{fig iteration process}.

\begin{figure}

\begin{centering}
\includegraphics[width=0.8\textwidth]{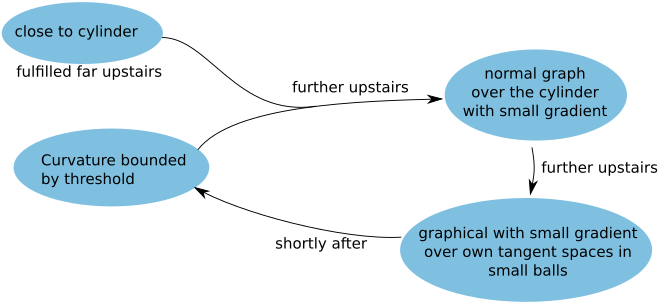}
\par\end{centering}
\caption{The rough idea of the proof.}

\label{fig iteration process}
\end{figure}

We set $Z_{t}\coloneqq\partial\Omega_{t}\times\R$.

Let $0<\varepsilon$, $L\le1$ be as in Theorem \ref{thm cb Yin}.
By Proposition \ref{prop ntg} there is $r>0$ dependent on 
\[
L,\quad\sup_{t\in[0,T]}\sup_{x\in Z_{t}}|A_{Z_{t}}(x,t)|\,,\quad\text{and}\quad\sup_{t\in[0,T]}\sup_{x,y\in Z_{t}}\frac{\dist_{Z_{t}}(x,y)}{|x-y|}
\]
with the following property. If $M$ is any normal graph over $Z_{t}$
with gradient bounded by $L/6$ and which is in a tubular neighborhood
of thickness at most half of the maximal tubular neighborhood thickness,
then $M$ admits local graph representations of radius $r$ with gradient
bounded by $L$. By decreasing $r$ if necessary, we may assume $r<1$
and $\sup_{M_{0}}|A|\le r^{-1}$, where $|A|$ corresponds to $M_{0}$.

We set $C_{A}\coloneqq2(\varepsilon\,r)^{-1}$. This will function
as a threshold for the curvature. By Corollary \ref{cor close hypersurfaces}
there is $\delta>0$ such that for any two hypersurfaces $M'\subset M$,
where $M'$ has buffer $1$ in $M$, with curvature bounded by $C_{A}$
and which lie in a (tubular) neighborhood of $Z_{t}$ of size $\delta$,
can be locally written as a normal graph over $Z_{t}$ with gradient
bounded by $L/8$.

We set 
\[
a\coloneqq\sup\{u(x,t)\colon t\in[0,T],\,x\in\Omega_{t},\,\dist(x,\partial\Omega_{t})\ge\delta\}\;,
\]
such that $M_{t}^{a}\coloneqq M_{t}\cap\{x^{n+1}>a\}$ lies in a tubular
neighborhood of $Z_{t}$ of size $\delta$. By Lemma \ref{lem close hypersurfaces},
$M_{t}^{a+1}$ is a normal graph over $Z_{t}$ (following Definition
\ref{def normal graph}, where it can be a graph over a subset) if
the curvature of $M_{t}^{a}$ is bounded by $C_{A}$. By our choice
of $r$ in the beginning of the proof, this shows that $M_{t}^{a+1}$
admits local graph representations of radius $r$ with gradient bound
$L$ if $M_{t}^{a}$'s curvature is bounded by $C_{A}$.

Because of $\sup_{M_{0}}|A|\le r^{-1}<C_{A}$, $M_{0}^{a+1}$ admits
local graph representations of radius $r$ with gradient bounded by
$L$. The interior estimate (\ref{eq Yin 2}) yields 
\begin{equation}
|A(x,t)|^{2}\le(\varepsilon\,r)^{-2}<(C_{A})^{2}\label{eq cb 2. Art}
\end{equation}
for $t\in[0,\min\{(\varepsilon r)^{2},T\}]$ and $x\in M_{t}^{a+1+r}\subset M_{t}^{a+2}$.

From (\ref{eq Yin 1}) we infer that, in the case that the curvature
of $M_{t}^{a'}$ ($a'\ge a$) is smaller than $C_{A}$ and, therefore,
$M_{t}^{a'+1}$ admits local graph representations of radius $r$,
like it is demonstrated above, the estimate 
\begin{equation}
\big|A\big(x,t+(\varepsilon\,r)^{2}\big)\big|^{2}\le2\,(\varepsilon\,r)^{-2}<(C_{A})^{2}\label{eq cb 1. Art}
\end{equation}
holds for $x\in M_{t+(\varepsilon\,r)^{2}}^{a'+2}$.

Starting from the bound (\ref{eq cb 2. Art}) on the interval $[0,\varepsilon^{2}\,r^{2}]\cap[0,T]$,
the estimate (\ref{eq cb 1. Art}) can be applied iteratively to show
that the curvature of $M_{t}^{a+2N}$ ($N$ is some controlled number
of steps) cannot reach the threshold $C_{A}$ for any $t\in[0,T]$.
The procedure is demonstrated in Figure \ref{fig C_A}. Starting with
the curvature at the brown dot, (\ref{eq cb 2. Art}) establishes
the curvature bound depicted by the brown line. Since the curvature
stays below the threshold $C_{A}$, we can apply the curvature bound
(\ref{eq cb 1. Art}) to obtain the curvature bound in red. In fact,
the picture even shows how to obtain (\ref{eq cb 1. Art}) from (\ref{eq Yin 1}).
Starting from the red curvature bound, which is below the threshold
$C_{A}$, one can again apply (\ref{eq cb 1. Art}) to obtain the
orange curvature bound. In this way, we proceed through all the colors
of the rainbow (and beyond). We will reach the time $T$ in finitely
many steps, providing us with curvature estimates on the whole time
interval. However, the region where the estimates hold go up in height
by 2 units per step.

\unitlength=1cm 
\begin{figure}
\centering \begin{picture}(12,8) \put(2,0.5){\includegraphics[height=7cm]{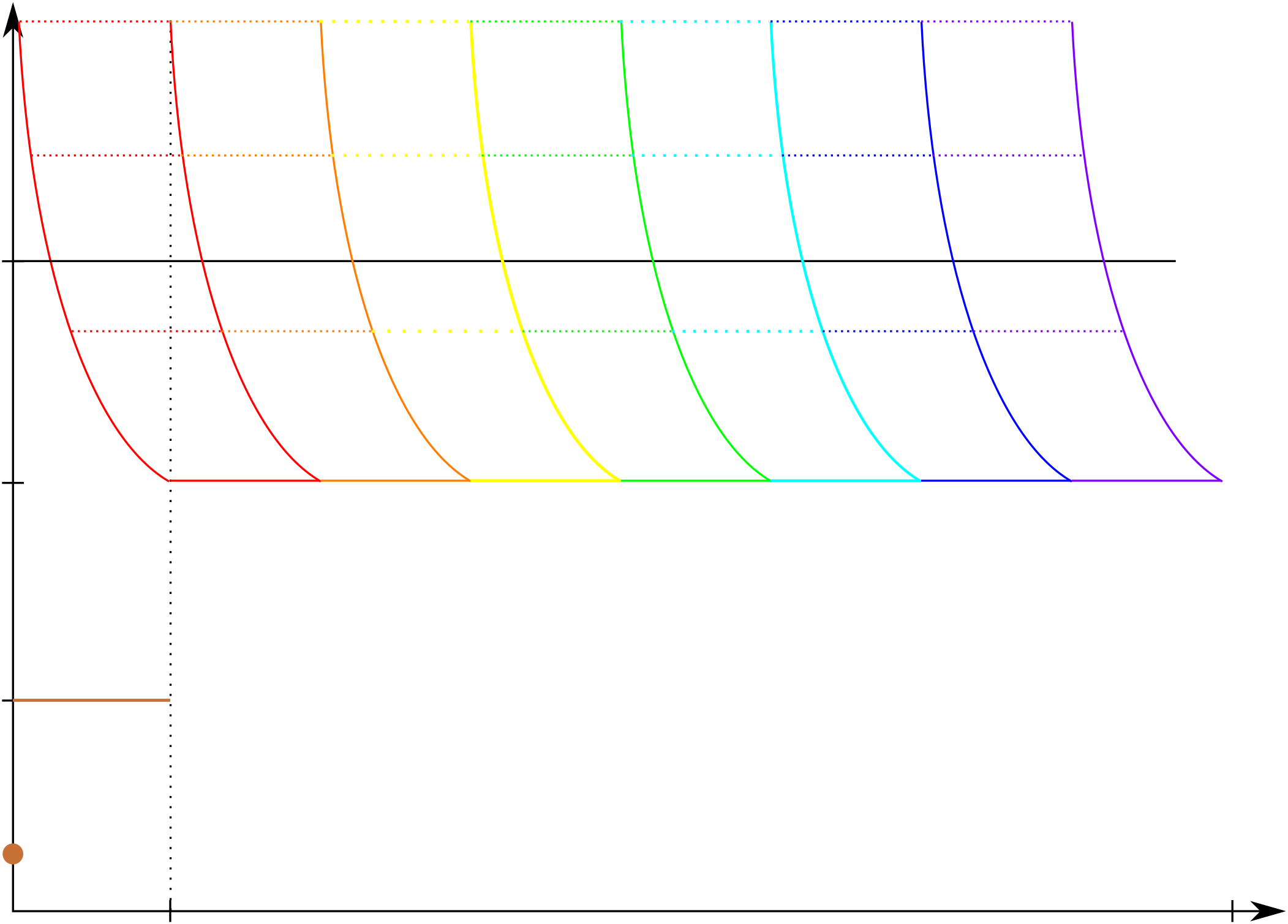}}
\put(0.2,0.9){$\sup|A_{M_{0}}|$} \put(0.8,2.1){$(\varepsilon r)^{-1}$}
\put(3,0){$(\varepsilon r)^{2}$} \put(0.3,3.8){$\sqrt{2}(\varepsilon r)^{-1}$}
\put(-0.4,5.5){$C_{A}=2(\varepsilon r)^{-1}$} \put(1.8,7.7){$|A|$}
\put(12,0.5){$t$} \end{picture} 
 \caption{The iterative procedure to obtain the curvature estimate.}
\label{fig C_A}
\end{figure}

Below the height $a+2N$ the curvature is bounded (in a non-controlled
fashion) because it is a continuous function on a compact set.

Now that we have curvature estimates, we can work on improving them:
Because of the curvature bound, in great heights the solution $M_{t}$
is $C^{1}$-close to the cylinder $Z_{t}$. This follows from Lemma
\ref{lem close hypersurfaces}. We translate the solutions downwards,
so we consider $M_{t}-h\,e_{n+1}$, and by the Arzelà-Ascoli-Theorem
we can extract a limit for $h\to\infty$ which must be the enveloping
cylinder $Z_{t}$. For $t>0$, the convergence is not only uniform
($C^{0}$) but by interior estimates for $|\nabla^{k}A|$, that hold
for mean curvature flows of bounded curvature \cite{EH}, and interpolation
inequalities the uniform convergence expands to smooth convergence.
In other words, the solution $M_{t}$ is smoothly asymptotic to the
cylinder $Z_{t}$.
\end{proof}
\begin{rem*}
In the dimensions $n=1,2$, the result holds beyond singularities:
One can obtain the smooth convergence of $M_{t}-h\,e_{n+1}$ to the
cylinder $\partial\Omega_{t}\times\R$ for all positive times $t>0$
provided $\partial\Omega_{t}$ is not singular. We are assuming that
$\Omega_{0}$ is smooth and bounded.

In the case $n=1$, $\Omega_{0}$ is a union of bounded intervals
with fixed distances between them and $\Omega_{0}=\Omega_{t}$ holds
for all times. Thus, there are no singular times and $M_{t}$ is smoothly
asymptotic to $\partial\Omega_{t}\times\R$ for all $t>0$.

In the case $n=2$, the Gage-Hamilton-Grayson theorem \cite{GH,Gra}
ensures that any singularity of $(\partial\Omega_{t})_{t}$ looks
like a shrinking circle when magnified. Therefore, the connected component
of $\partial\Omega_{t}$ on which the singularity appears becomes
extinct in that singularity. The proof of Theorem \ref{thm cb main}
can be carried out on each connected component of $\partial\Omega_{t}$
separately. And none of the connected components have seen singularities
in their past. Loosely speaking, when a singularity arises, all problems
related to it disappear in that singularity.
\end{rem*}

\section[$\alpha$-Noncollapsed MCF Without Singularities]{$\alpha$-noncollapsed mean curvature flow without singularities}

\label{sec noncollComplGraph}

\subsection{Asymptotics to the cylinder}

Let $(u,\Omega)$ be a mean curvature flow without singularities with
$u\ge0$. We note that $u(x,t)\to\infty$ as $(x,t)\to\partial\Omega$.
We assume that $\Omega$ is bounded. In particular, any $\Omega_{t}\subset\R^{n}$
is bounded and $\Omega_{t}=\emptyset$ for all $t\ge T$ with some
$T>0$. Furthermore, we suppose that $\Omega_{0}$ is smooth, mean
convex, and $\alpha$-noncollapsed for some $\alpha>0$ (Definition
\ref{def alpha-nc}). We also assume that $u_{0}=u(\cdot,0)$ is smooth,
that $M_{0}$ has bounded curvature, and that $M_{t}=\graph u|_{\Omega_{t}}$
is mean convex and $\alpha$-noncollapsed for all $t\ge0$ (if $M_{t}\neq\emptyset$).

Our aim in this section is to prove
\begin{thm}
\label{thm ncg asymptotic} With the above assumptions, if $\partial\Omega_{t}\subset\R^{n}$
is a smooth hypersurface for some $t>0$ then $M_{t}-h\,e_{n+1}$
converges in $C_{\textrm{loc}}^{\infty}$ to the cylinder $\partial\Omega_{t}\times\R$
for $h\to\infty$.
\end{thm}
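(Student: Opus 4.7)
My plan parallels the strategy of Theorem~\ref{thm cb main}, but replaces the pseudolocality-based interior curvature estimate by the $\alpha$-noncollapsed curvature bound of \cite{HK}. The advantage is that $\alpha$-noncollapsing is a global hypothesis that persists through singularities of the shadow flow, so the argument is not obstructed by singular times of $(\partial\Omega_s)$ for $s<t$. The initial ingredient is the $C^0$ convergence $M_t - h\,e_{n+1}\to Z_t:=\partial\Omega_t\times\R$ in the local Hausdorff sense: as in Theorem~\ref{thm cb main}, the maximality condition forces $u(\cdot,t)\to+\infty$ at $\partial\Omega_t$ while $u(\cdot,t)$ stays locally bounded on $\Omega_t$, so the translated graphs approach the enveloping cylinder uniformly on every compact set.

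The heart of the proof is a uniform curvature bound on $M_t$ in a tubular neighborhood of $Z_t$ at sufficiently large heights. Since $M_t$ is $\alpha$-noncollapsed, $|A|$ is controlled pointwise by $H$, so it suffices to bound $H$. Because $\partial\Omega_t$ is smooth, continuity of the shadow flow provides a small backwards interval $[t-\varepsilon,t]$ on which $\partial\Omega_s$ stays smooth with uniformly bounded curvature, hence the enveloping cylinders $Z_s$ have uniformly bounded mean curvature there. To obtain the $H$-bound I would argue by contradiction via the HK compactness for $\alpha$-noncollapsed mean convex flows: a sequence $p_k\in M_t$ with $H(p_k)\to\infty$ and $x^{n+1}(p_k)\to\infty$ forces $\pi(p_k)\to\partial\Omega_t$ by maximality, and parabolic rescaling by $\lambda_k=H(p_k)$ yields, after passing to a subsequence, a smooth, ancient, mean convex, $\alpha$-noncollapsed limit flow $\tilde M_{\infty}$ through the origin with $H=1$ there. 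The $\alpha$-noncollapsed inscribed ball at $p_k$ keeps the rescaled $Z_t$ at distance $\ge\alpha$ from the origin, and since $Z_t$ has bounded curvature its rescaling flattens to a hyperplane. Graphicality in the $e_{n+1}$-direction together with $u\to\infty$ at $\partial\Omega_t$ passes to the limit, so $\tilde M_\infty$ is an $\alpha$-noncollapsed ancient graphical flow over (at worst) a half-space, with the graph function diverging at the boundary of the limit domain. Combined with the classification of ancient $\alpha$-noncollapsed mean convex flows, this rigid structure collapses the limit to a hyperplane and contradicts $H=1$ at the origin. Below any fixed height the curvature of $M_t$ is bounded by continuity on a compact set.

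Once $|A|$ is uniformly bounded near $Z_t$ at large heights, the interior estimates of Ecker--Huisken~\cite{EH} yield uniform bounds on all $|\nabla^\ell A|$. Any sequence $h_k\to\infty$ then admits a $C^\infty_{\mathrm{loc}}$-subconvergent subsequence of $M_t - h_k\,e_{n+1}$, whose limit must agree with $Z_t$ by the $C^0$ step; uniqueness of the limit upgrades subsequential to full smooth convergence. The main obstacle I anticipate lies in the $H$-bound: cleanly excluding convex ancient translators such as the bowl soliton from the blow-up limit forces one either to exploit the graphical structure and the boundary divergence of $u$ very carefully, or, as a fallback, to compare $M_s$ directly with a shrinking cylindrical foliation of a tubular neighborhood of $Z_s$ built from the controlled smooth geometry of $\partial\Omega_s$ on $[t-\varepsilon,t]$, invoking the weak-flow machinery from the appendix.
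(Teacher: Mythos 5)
Your plan is a genuinely different route from the paper's, and the step you yourself flag as the obstacle is exactly where the argument as written has a gap.

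The paper does not bound $H$ by a blow-up compactness argument. Instead (Theorem~\ref{thm H continuous}) it considers the spacetime track $\mathcal{M}=\bigcup_t M_t\times\{t\}$ and observes, via Lemma~\ref{lem spacetime track}, that $|A_{\mathcal{M}}|^2$ is built from the quantities $|A|^2/(1+H^2)$, $|\nabla H|^2/(1+H^2)^2$, $|\partial_t H|^2/(1+H^2)^3$ --- precisely the \emph{$H$-normalized} derivatives of curvature that Corollary~\ref{cor alpha-nc curvature bound} bounds uniformly for $t\ge t_0>0$, \emph{without any a priori upper bound on $H$}. Combined with the lower bound $H\ge c>0$ (from noncollapsing and bounded interior balls), this gives a uniform $C^2$-bound on the arrival time function $\tau$ representing $\mathcal{M}=\graph\tau$. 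The $H$-bound and, more, the continuity of $H[u]$ up to the boundary then drops out of the $C^1_{\mathrm{loc}}$-convergence of the vertical translates $\tau(\cdot,\cdot+h)$, since $H=|\nabla\tau|^{-1}$. This sidesteps the classification of ancient flows entirely, which is the point.

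By contrast, your blow-up argument really must rule out the bowl soliton (and its lower-dimensional analogues crossed with Euclidean factors), which is a perfectly good ancient, mean-convex, $\alpha$-noncollapsed, \emph{graphical} flow with $H=1$ at a chosen point. ``Graphical over a half-space with $u\to\infty$ at the boundary'' does not obviously survive the rescaling in a way that forces a hyperplane: after rescaling by $\lambda_k=H(p_k)\to\infty$ the domain boundary $\partial\Omega_t$ flattens and recedes, and it is not clear that any trace of the boundary divergence remains in the limit at bounded scale; the rescaled scene is locally indistinguishable from the rescaling of a flow over a large ball. Also, the claim that the $\alpha$-noncollapsed inscribed ball ``keeps the rescaled $Z_t$ at distance $\ge\alpha$ from the origin'' is not correct as stated --- the noncollapsing balls are tangent to $M_t$ at $p_k$ and say nothing directly about the distance to the enveloping cylinder. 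You would need a genuine input, either a classification theorem for the limit, or a barrier argument (your proposed fallback), to finish; as presented the key step is not established. The rest of your outline (the $C^0$ step, Ecker--Huisken for higher derivatives once $|A|$ is bounded, Arzel\`a--Ascoli plus uniqueness of the limit) matches the paper's closing steps and is fine.
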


Like in section \ref{sec curvaturebound}, establishing a curvature
bound is crucial. We exploit the $H$-dependent curvature bound from
Corollary \ref{cor alpha-nc curvature bound}. Firstly, we will use
it to demonstrate that the mean curvature $H$ behaves continuously
at infinity and converges to the mean curvature of $\partial\Omega_{t}$
(Theorem \ref{thm H continuous}). Once we have control on $H$, we
can directly use the $H$-dependent estimates to bound all the terms
$|\nabla^{k}A|$, $k\in\mathbb{N}$, for $M_{t}$.
\begin{proof}[Proof of Theorem \ref{thm ncg asymptotic}]
 Let $t>0$ be a time when $\partial\Omega_{t}$ is smooth. Theorem
\ref{thm H continuous} asserts that $H[u](\cdot,t)$ is continuous
with boundary values $H[\partial\Omega_{t}]$ (see Theorem \ref{thm H continuous}
for the terminology). The continuity of $H[u](\cdot,t)$, the compactness
of $\overline{\Omega}_{t}$, and the boundedness of $H[\partial\Omega_{t}]$
together imply the boundedness of $H[u](\cdot,t)$ and therefore the
boundedness of the mean curvature $H$ of $M_{t}$.

Because $M_{t}$ belongs to an $\alpha$-noncollapsed mean curvature
flow, Corollary \ref{cor alpha-nc curvature bound} implies that the
whole geometry of $M_{t}$ is bounded, i.e., all terms $|\nabla^{l}A|$
($l=0,1,\ldots$) are bounded. This applies even for times slightly
before. The smooth convergence now follows from Lemma \ref{lem close hypersurfaces},
the theorem of Arzelà-Ascoli, and interpolation inequalities.
\end{proof}
\begin{thm}
\label{thm H continuous} Let $H[u](\cdot,t)$ be the mean curvature
of the graph of $u(\cdot,t)$ defined as a function on $\Omega_{t}$.
Then $H[u]$ is continuously extendable to $\overline{\Omega}$ with
values in $(0,\infty]$, and there holds $H[u](x,t)=H[\partial\Omega_{t}](x)$
for $x\in\partial\Omega_{t}$ with $H[\partial\Omega_{t}]$ the mean
curvature of the boundary $\partial\Omega_{t}\subset\R^{n}$ (may
be infinite).
\end{thm}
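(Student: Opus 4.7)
The plan is to handle the interior and the boundary separately, using at the boundary a compactness argument for $\alpha$-noncollapsed mean curvature flows. On the interior $\Omega \setminus (\Omega_0 \times \{0\})$ continuity of $H[u]$ is immediate from smoothness of $u$; at $t = 0$ it follows from the assumption that $u_0$ is smooth with bounded initial curvature. On $\partial \Omega$, I define the extension by $H[u](x,t) := H[\partial \Omega_t](x) \in (0, \infty]$ for $x \in \partial \Omega_t$, assigning $+\infty$ where $\partial \Omega_t$ fails to be smooth, with strict positivity coming from mean convexity.

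The main work is at a boundary point $(x_\infty, t_\infty) \in \partial \Omega$ with $x_\infty \in \partial \Omega_{t_\infty}$ and $t_\infty > 0$. Let $(x_k, t_k) \in \Omega$ converge to $(x_\infty, t_\infty)$, so by maximality $u(x_k, t_k) \to \infty$. I would consider the vertically translated time-shifted flows $\tilde M^{(k)}_s := M_{t_k + s} - u(x_k, t_k) e_{n+1}$, each $\alpha$-noncollapsed (translation-invariant property) and passing through $(x_k, 0)$ at $s = 0$. Along any subsequence on which $H[u](x_k, t_k)$ stays bounded, Corollary \ref{cor alpha-nc curvature bound} yields uniform bounds on all $|\nabla^l A|$ in a fixed parabolic neighborhood, and Arzel\`a--Ascoli extracts a smooth subsequential limit $(\Sigma^\infty_s)$ through $(x_\infty, 0)$. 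Since $u(x_k, t_k) \to \infty$, any additional vertical shift of $\tilde M^{(k)}$ by a fixed height produces the same kind of smooth limit at that height, so $\Sigma^\infty_s$ is translation-invariant in the $e_{n+1}$-direction and of the form $\Gamma_s \times \R$ for some smooth mean curvature flow of $(n-1)$-surfaces $\Gamma_s \subset \R^n$. Using $M_{t_k+s} \subset \Omega_{t_k+s} \times \R$ together with the monotone shrinking of the mean convex shadow flow gives $\Gamma_{t_\infty} \subset \overline{\Omega}_{t_\infty}$ with $x_\infty \in \Gamma_{t_\infty} \cap \partial \Omega_{t_\infty}$. Smoothness of $\Gamma_{t_\infty}$ rules out a corner of $\partial \Omega_{t_\infty}$ at $x_\infty$, and the strong maximum principle identifies $\Gamma_{t_\infty}$ with $\partial \Omega_{t_\infty}$ near $x_\infty$. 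Smooth convergence then gives $H[u](x_k, t_k) \to H[\partial \Omega_{t_\infty}](x_\infty)$ on the subsequence; contrapositively, if $\partial \Omega_{t_\infty}$ is singular at $x_\infty$ then no subsequential bound on $H[u](x_k, t_k)$ is possible, so $H[u](x_k, t_k) \to \infty = H[\partial \Omega_{t_\infty}](x_\infty)$.

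The main obstacle I expect is the complementary case: ruling out $\limsup H[u](x_k, t_k) = \infty$ when $\partial \Omega_{t_\infty}$ is smooth at $x_\infty$ with $H[\partial \Omega_{t_\infty}](x_\infty) < \infty$. My preferred approach is to use the smooth cylindrical mean curvature flow $\partial \Omega_t \times \R$ (smooth in a space-time neighborhood of $(x_\infty, t_\infty)$) as an outer barrier: a slight outward perturbation is disjoint from $M_t$ by the avoidance principle and provides a uniform exterior tangent-ball condition at points of $M_t$ close to the cylinder. Combined with the $\alpha$-noncollapsed estimate $|A| \le C(\alpha) H$ and the interior ball, this caps $H$ uniformly near $(x_\infty, t_\infty)$. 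A cleaner alternative is to run the compactness argument above within the Haslhofer--Kleiner weak $C^{1,\alpha}$ framework for $\alpha$-noncollapsed flows, which does not require a prior $H$-bound, identify the cylindrical weak limit as before, and then read off the $H$-bound a posteriori.
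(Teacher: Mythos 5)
Your reduction of the theorem to the dichotomy ``either $H[u](x_k,t_k)$ stays bounded along a subsequence, or it does not'' leaves the genuinely hard half unproven: the continuity statement requires showing that $H[u]$ has a limit along \emph{every} sequence approaching a boundary point, and in particular that $H[u]$ cannot blow up as one approaches a point where $\partial\Omega_{t_\infty}$ is smooth. You name this as ``the main obstacle,'' and neither of your proposed fixes closes it. The barrier idea does not work as stated: knowing $M_t\subset\Omega_t\times\R$ (or inside a slightly enlarged cylinder, by avoidance) gives no exterior tangent ball \emph{at points of $M_t$} -- the graph can in principle bend sharply while staying inside the cylinder -- and the noncollapsedness inequality $|A|\le C(\alpha)H$ goes in the wrong direction for capping $H$; the interior/exterior balls of radius $\alpha/H$ shrink precisely when $H$ is large, so they yield a lower bound $H\ge c>0$ (via bounded interior balls), not an upper bound. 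The second alternative is an appeal to a compactness and local regularity theory for weak $\alpha$-noncollapsed flows that is not set up here (and whose application to these noncompact, complete graphical time slices would itself need justification); moreover, even in your ``bounded $H$'' case the claim that the subsequential limit splits off a vertical $\R$-factor presupposes curvature control at points shifted by an arbitrary fixed height $h$, which does not follow from the pointwise bound $H(x_k,t_k)\le\Lambda$ (Theorem \ref{thm haslhofers curvature bound} only controls a parabolic ball of radius comparable to $\rho/\Lambda$).

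The paper's proof avoids any a priori control of $H$ by a different device: it works with the time-of-arrival function $\tau$ of the spacetime track $\mathcal{M}=\bigcup_t M_t\times\{t\}$. Lemma \ref{lem spacetime track} expresses $|A_{\mathcal{M}}|^2$ through quantities of the form $|\nabla^lA|^2/(1+H^2)^{l+1}$, $l=0,1,2$, and these are uniformly bounded by Corollary \ref{cor alpha-nc curvature bound} \emph{regardless of whether $H$ blows up} (with Theorem \ref{thm cb main} covering small times). Since $H\ge c>0$ (noncollapsedness plus bounded interior balls), $|\nabla\tau|\le c^{-1}$, so $\tau$ has a uniform $C^2$-type bound; the vertical translates $\tau(x,y+h)$ increase monotonically and converge in $C^1_{\mathrm{loc}}$ as $h\to\infty$ to the arrival function $\sigma$ of $\partial\Omega_t\times\R$. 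Then $H[u](x,t)=|\nabla\tau(x,u(x,t))|^{-1}\to|\nabla\sigma(x,0)|^{-1}=H[\partial\Omega_t](x)$ as $(x,t)\to\partial\Omega$, which gives the continuous extension with values in $(0,\infty]$ in one stroke, including at points where the boundary mean curvature is infinite. If you want to salvage your compactness approach, you would need a substitute for exactly this step -- some argument, not relying on an assumed $H$-bound, that forces smooth subconvergence to the cylinder near a smooth boundary point; the arrival-time argument is the paper's way of doing precisely that.
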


\begin{proof}
\global\long\def\M{\mathcal{M}}%
We denote the \emph{spacetime track} by 
\begin{equation}
\M\coloneqq\bigcup_{t\in[0,\infty)}M_{t}\times\{t\}\subset\R^{n+1}\times[0,\infty)\;.
\end{equation}
In what follows, unspecified geometric quantities refer to $M_{t}$.
Points on $M_{t}$ are tracked in time along the normal direction.

We have 
\begin{equation}
|\nabla H|=|g^{ij}\,\nabla_{k}h_{ij}|\le|g^{ij}|\cdot|\nabla_{k}h_{ij}|=\sqrt{n}\,|\nabla A|
\end{equation}
and 
\begin{equation}
|\partial_{t}H|=\big|\Delta H+|A|^{2}\,H\big|\le|g^{ij}|\cdot|g^{kl}|\cdot|\nabla_{ij}^{2}h_{kl}|+|A|^{2}\cdot|g^{kl}|\cdot|h_{kl}|=n\,|\nabla^{2}A|+\sqrt{n}\,|A|^{3}\;.
\end{equation}
Thus, by Lemma \ref{lem spacetime track} and these inequalities,
$|A_{\M}|^{2}$ is bounded by terms of the form $\frac{|\nabla^{l}A|^{2}}{(1+H^{2})^{l+1}}$
with $l=0,1,2$. By virtue of Corollary \ref{cor alpha-nc curvature bound},
the curvature of the spacetime track $\M$ is uniformly bounded for
$t\ge t_{0}>0$. For times $0<t<t_{0}$, $t_{0}$ sufficiently small,
we may use Theorem \ref{thm cb main} to see that $\M$ has bounded
curvature for these times, too.

By assumption every $M_{t}$ is $\alpha$-noncollapsed and because
$M_{t}$ is not a hyperplane we have $H>0$ everywhere. Because $H$
is the normal speed, this implies that the spacetime track $\M$ is
graphical over $\R^{n+1}$ in the time direction. More precisely,
the domain is $D\coloneqq\{(x,y)\in\R^{n+1}\colon y\ge u_{0}(x)\}$.
The representation function is the time of arrival function $\tau\colon D\to\R$.
By the $\alpha$-noncollapsedness of the $M_{t}$ and because of the
boundedness of interior balls, the mean curvature is uniformly bounded
from below, $H\ge c>0$. This implies that $|\nabla\tau|$ is uniformly
bounded. Together with the curvature bound for $\M=\graph\tau$, this
implies a finite bound for $\|\tau\|_{C^{2}}$.

The sequence of functions $\tau(x,y+h)$ is monotonically increasing
and, by the $C^{2}$-bound, converges for $h\to\infty$ in $C_{\mathrm{loc}}^{1}$
to a $C^{1,1}$ function $\sigma$ defined on $\Omega_{0}\times\R$.
It is independent of the $y$-variable and represents $\partial\Omega\times\R$
as a graph.

The mean curvature depends on the gradient of the time of arrival
function: 
\begin{equation}
H[u](x,t)=\left|\nabla\tau(x,u(x,t))\right|^{-1}\,.
\end{equation}
For $(x,t)\to\partial\Omega$ we have $u(x,t)\to\infty$ and therefore
\begin{equation}
H[u](x,t)=\left|\nabla\tau(x,u(x,t))\right|^{-1}\to\left|\nabla\sigma(x,0)\right|^{-1}=H[\partial\Omega_{t}](x)\;.\qedhere
\end{equation}
\end{proof}
\begin{rem*}
From the proof one can easily see that $\partial\Omega\in C^{1,1}$
holds.
\end{rem*}
\begin{rem*}
It is worth mentioning that, in contrast to $|A_{\M}|$, $|\nabla A_{\M}|$
can not be estimated by terms of the form $\frac{|\nabla^{l}A|^{2}}{(1+H^{2})^{l+1}}$.
Therefore, we do not get higher estimates for the spacetime track
from Corollary \ref{cor alpha-nc curvature bound}.
\end{rem*}
\begin{lem}
\label{lem spacetime track} The squared norm of the second fundamental
form of the spacetime track $\M$ is given by 
\begin{equation}
|A_{\M}|^{2}=\frac{|A|^{2}}{1+H^{2}}+2\,\frac{|\nabla H|^{2}}{(1+H^{2})^{2}}+\frac{|\partial_{t}H|^{2}}{(1+H^{2})^{3}}
\end{equation}
where the geometric quantities on the right hand side correspond to
those of $M_{t}$.
\end{lem}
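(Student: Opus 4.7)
The plan is to parametrize the spacetime track directly and compute $|A_{\mathcal M}|^2$ from the standard formula for a codimension-one submanifold. Fix a local parametrization $X(\cdot,t)\colon U\to M_t\subset\R^{n+1}$ evolving by mean curvature ($\partial_t X=H\nu$, with $\nu$ the outward unit normal) and set $F(p,t)\coloneqq(X(p,t),t)\in\R^{n+1}\times\R$. Then $\partial_iF=(\partial_iX,0)$ and $\partial_tF=(H\nu,1)$, so the induced metric $\tilde g$ on $\mathcal M$ is block diagonal:
\begin{equation*}
\tilde g_{ij}=g_{ij},\qquad \tilde g_{i0}=0,\qquad \tilde g_{00}=1+H^{2},
\end{equation*}
because $\partial_iX$ is tangent to $M_t$ hence orthogonal to $\nu$. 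The inverse is therefore block diagonal with $\tilde g^{ij}=g^{ij}$, $\tilde g^{i0}=0$, $\tilde g^{00}=(1+H^2)^{-1}$.

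Next I would find the unit normal to $\mathcal M$ in $\R^{n+2}$. An ansatz $N=(a\nu,b)$ must be orthogonal to $\partial_iF$ (automatic, since $\nu\perp\partial_iX$) and to $\partial_tF$, forcing $aH+b=0$; normalization then gives
\begin{equation*}
N=\frac{1}{\sqrt{1+H^{2}}}\bigl(\nu,-H\bigr).
\end{equation*}
Using the ambient formula $\tilde h_{\alpha\beta}=\langle N,\partial_\alpha\partial_\beta F\rangle$ and the Gauss/Weingarten relations $\partial_i\partial_jX=\Gamma_{ij}^k\,\partial_kX-h_{ij}\nu$ and $\partial_i\nu=h_i{}^k\,\partial_kX$, together with the evolution $\partial_t\nu=-\nabla H$ (a consequence of $\partial_iH=-\langle\nu,\partial_i\partial_tX\rangle$), a direct computation yields
\begin{equation*}
\tilde h_{ij}=\frac{-h_{ij}}{\sqrt{1+H^{2}}},\qquad
\tilde h_{i0}=\frac{\nabla_iH}{\sqrt{1+H^{2}}},\qquad
\tilde h_{00}=\frac{\partial_tH}{\sqrt{1+H^{2}}},
\end{equation*}
where for the last two the tangential contributions of $\partial_t\nu$ and $\partial_i\nu$ drop out in the pairing with $\nu$.

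Finally, I would assemble $|A_{\mathcal M}|^{2}=\tilde h_{\alpha\beta}\tilde h_{\gamma\delta}\tilde g^{\alpha\gamma}\tilde g^{\beta\delta}$ by exploiting the block-diagonal structure: because $\tilde g^{i0}=0$, the only surviving index patterns are (i) all four indices spatial, giving $\tfrac{|A|^{2}}{1+H^{2}}$; (ii) one of $\{\alpha,\gamma\}$ and one of $\{\beta,\delta\}$ equal to $0$, which produces two identical contributions $\tfrac{|\nabla H|^{2}}{(1+H^{2})^{2}}$; and (iii) all four indices equal to $0$, giving $\tfrac{|\partial_tH|^{2}}{(1+H^{2})^{3}}$. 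Summing yields the claimed identity. The only genuine obstacle is keeping the conventions and sign choices for $\nu$, $h_{ij}$, and $\partial_t\nu$ consistent throughout; once that is pinned down, the cross terms and the index bookkeeping are routine.
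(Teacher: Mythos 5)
Your computation is correct and the final assembly does yield the claimed identity. However, your parametrization is genuinely different from the paper's. The paper represents $\mathcal{M}$ locally as a graph $y=v(x,t)$ over $\R^{n}\times\R$, rotates coordinates so that $\nabla_{x}v$ vanishes at the base point, and then reads off $g_{\mathcal M}^{-1}$ and $h_{\mathcal M}$ from the standard graph formulas, using the graphical flow equation $\partial_{t}v=\sqrt{1+|\nabla_{x}v|^{2}}\,H$ to identify $\partial_{t}v=H$, $\nabla_{x}\partial_{t}v=\nabla_{x}H$, $\partial_{t}^{2}v=\partial_{t}H$ at that point. You instead use the moving parametrization $F(p,t)=(X(p,t),t)$ with $\partial_{t}X=H\nu$, which makes the induced metric manifestly block-diagonal, identifies the unit normal $N=(1+H^{2})^{-1/2}(\nu,-H)$ directly, and computes $\tilde h$ via the Gauss and Weingarten formulas; the tangency of $\partial_{t}\nu$ is all that is needed to simplify the $00$-component. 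Your route avoids the choice of a privileged point and a rotation, so it is somewhat more coordinate-free, whereas the paper's route ties the computation to the graphical framework used throughout. One small inaccuracy: you write $\partial_{i}H=-\langle\nu,\partial_{i}\partial_{t}X\rangle$; with $\partial_{t}X=H\nu$ the correct sign is $\partial_{i}H=+\langle\nu,\partial_{i}\partial_{t}X\rangle$, giving $\partial_{t}\nu=-\nabla H$. This sign slip has no effect on the result, since only the tangentiality of $\partial_{t}\nu$ (and of $\partial_{i}\nu$) enters, and all quantities are ultimately squared.
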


\begin{proof}
For a point $(x_{0},y_{0},t_{0})\in\M$ ($x_{0}\in\R^{n}$, $y_{0}\in\R$)
we represent $\M$ locally as the graph of a function $v(x,t)$. By
a rotation in $(x,y)$-space ($\R^{n+1}$) we may assume that $\nabla_{x}v(x_{0},t_{0})=0$
holds. Because $(M_{t})_{t}$ is a mean curvature flow, $v$ solves
graphical mean curvature flow, i.e., $\partial_{t}v=\sqrt{1+|\nabla_{x}v|^{2}}\,H$
holds. At the point $(x_{0},t_{0})$ we obtain $\partial_{t}v=H$,
$\nabla_{x}\partial_{t}v=\nabla_{x}H$, and $\partial_{t}^{2}v=\partial_{t}H$.
Before stating computations with these identities at $(x_{0},y_{0},t_{0})$,
let us briefly fix some notation: $\mathrm{D}_{x}f$ is a linear form
and $\nabla_{x}f$ is the gradient of $f$, which is given by $\nabla_{x}f=g^{-1}\cdot(\mathrm{D}_{x}f)^{\mathsf{T}}$.
\begin{align*}
h_{\M} & =\frac{\mathrm{D}_{(x,t)}^{2}v}{\sqrt{1+|\nabla_{(x,t)}v|^{2}}}=\frac{1}{\sqrt{1+|(0,H)|^{2}}}\begin{pmatrix}\mathrm{D}_{x}^{2}v & (\mathrm{D}_{x}\partial_{t}v)^{\mathsf{T}}\\
\mathrm{D}_{x}\partial_{t}v & \partial_{t}^{2}v
\end{pmatrix}\\
 & =\frac{1}{\sqrt{1+H^{2}}}\begin{pmatrix}h & (\mathrm{D}_{x}H)^{\mathsf{T}}\\
\mathrm{D}_{x}H & \partial_{t}H
\end{pmatrix}\\
g_{\M}^{-1} & =\left(\left(\delta^{\alpha\beta}\right)-\frac{\nabla_{(x,t)}v\otimes\nabla_{(x,t)}v}{1+|\nabla_{(x,t)}v|^{2}}\right)=\begin{pmatrix}\left(\delta^{ij}\right) & 0\\
0 & 1-\frac{H^{2}}{1+H^{2}}
\end{pmatrix}=\begin{pmatrix}g^{-1} & 0\\
0 & \frac{1}{1+H^{2}}
\end{pmatrix}\\
A_{\M} & =g_{\M}^{-1}\cdot h_{\M}=\frac{1}{\sqrt{1+H^{2}}}\begin{pmatrix}g^{-1}\cdot h & g^{-1}\cdot(\mathrm{D}_{x}H)^{\mathsf{T}}\\
\frac{\mathrm{D}_{x}H}{1+H^{2}} & \frac{\partial_{t}H}{1+H^{2}}
\end{pmatrix}\\
 & =\frac{1}{\sqrt{1+H^{2}}}\begin{pmatrix}A & \nabla_{x}H\\
\frac{\mathrm{D}_{x}H}{1+H^{2}} & \frac{\partial_{t}H}{1+H^{2}}
\end{pmatrix}\\
|A_{\M}|^{2} & =\textrm{tr}(A_{\M}^{2})=\frac{1}{1+H^{2}}\mathrm{tr}\begin{pmatrix}A^{2}+\nabla_{x}H\cdot\frac{\mathrm{D}_{x}H}{1+H^{2}} & A\cdot\nabla_{x}H+\nabla_{x}H\cdot\frac{\partial_{t}H}{1+H^{2}}\\
\frac{\mathrm{D}_{x}H}{1+H^{2}}\cdot A+\frac{\partial_{t}H}{1+H^{2}}\,\frac{\mathrm{D}_{x}H}{1+H^{2}} & \frac{\mathrm{D}_{x}H}{1+H^{2}}\cdot\nabla_{x}H+\left(\frac{\partial_{t}H}{1+H^{2}}\right)^{2}
\end{pmatrix}\\
 & =\frac{1}{1+H^{2}}\left(|A|^{2}+2\,\frac{|\nabla_{x}H|^{2}}{1+H^{2}}+\frac{(\partial_{t}H)^{2}}{(1+H^{2})^{2}}\right)\;.\qedhere
\end{align*}
\end{proof}

\subsection[Construction]{Construction of an $\alpha$-noncollapsed mean curvature flow without
singularities}

Under the presumption that $\Omega_{0}$ is bounded and smooth, $H[\partial\Omega_{0}]>0$,
$|A_{M_{0}}|\le C$, and $H_{M_{0}}\ge c>0$, we will construct a
mean curvature flow without singularities that is $\alpha$-noncollapsed
for some $\alpha>0$ and matches these data. Theorem \ref{thm ncg asymptotic}
from the last section tells us that this flow is smoothly asymptotic
to the cylinder $\partial\Omega_{t}\times\R$ for all times $t$ when
$\partial\Omega_{t}$ is smooth.

At first, we describe our approach. We approximate $M_{0}$ by closed
hypersurfaces. This is done in a way such that these approximating
hypersurfaces are $\alpha$-noncollapsed. We let flow these by the
level-set flow. By the results of \cite{HK}, the $\alpha$-noncollapsedness
persists along this flow with the same $\alpha$. The level-set flow
will in fact be a smooth graphical flow below a certain height. This
height tends in the limit of the approximation to infinity. We can
then employ a limit process to obtain a mean curvature flow without
singularities which is $\alpha$-noncollapsed.

\paragraph{Approximation of $M_{0}$ by compact $M_{0}^{\delta}$.}

Let $\delta>0$ (to be thought of being small). The initial surface
$M_{0}$ is above a height $a_{\delta}=\max\{u_{0}(x)\colon x\in\Omega_{0},\,\dist(x,\partial\Omega_{0})\ge\delta\}$
in a $\delta$-neighborhood of $N\coloneqq\partial\Omega_{0}\times\R$.
If $\delta$ is sufficiently small, then Lemma \ref{lem close hypersurfaces}
yields that $M_{0}$ is a normal graph over $N$ above the height
$a_{\delta}$ (according to our Definition \ref{def normal graph}
we also speak of a normal graph over $N$ if we have a graph over
a subset of $N$). Let $v$ be the representation function for this
normal graph. We note that we always choose the outward pointing normal.
But because we define the tubular diffeomorphism in the form $(p,d)\mapsto p-d\,\nu$,
the function $v$ is positive (cf.\ Definitions \ref{def tubular neighborhood}
and \ref{def normal graph}). We have $v=\Oh(\delta)$, $|\nabla v|=\Oh(\sqrt{\delta})$,
and $|\nabla^{2}v|\le C$. By Proposition \ref{prop geometry of normal graphs}
there hold 
\begin{align}
g_{M_{0}} & =g_{N}+\Oh(\delta)\\
h_{M_{0}} & =h_{N}+\nabla^{2}v+\Oh(\delta)\;.
\end{align}

We choose a smooth function $\lambda\equiv\lambda_{\delta}\colon\R\to[0,1]$
with $\lambda(x)=0$ for $x\le a_{\delta}$ and $\lambda(x)=1$ for
$x\ge a_{\delta}+\frac{1}{\delta}$. We can engineer this function
in such a way that on $(a_{\delta},a_{\delta}+\frac{1}{\delta})$
we have $\lambda'>0$ and that $|\lambda'|+|\lambda''|^{1/2}=\Oh(\delta)$
holds. By slight abuse of notation, $\lambda$ becomes a function
on $N$ as $\lambda(x^{n+1})$.

On $N\cap\{a_{\delta}<x^{n+1}\le a_{\delta}+\frac{2}{\delta}\}$ we
define for $0<\varepsilon<\delta^{3}$ 
\begin{equation}
w\coloneqq(1-\lambda)\,v+\lambda\,\frac{\varepsilon}{2}\left(a_{\delta}+\frac{2}{\delta}-x^{n+1}\right)^{2}.
\end{equation}
Then we have 
\begin{align}
\begin{split}\nabla_{e_{n+1}}w & =(1-\lambda)\,\nabla_{e_{n+1}}v+\lambda\,\varepsilon\left(x^{n+1}-\left(a_{\delta}+\frac{2}{\delta}\right)\right)\\
 & \quad+\lambda'\left(\frac{\varepsilon}{2}\left(a_{\delta}+\frac{2}{\delta}-x^{n+1}\right)^{2}-v\right),
\end{split}
\label{eq cnmcf w'<0}\\
\nabla_{\partial\Omega_{0}}w & =(1-\lambda)\,\nabla_{\partial\Omega_{0}}v\;,\\
\nabla_{\partial\Omega_{0}}^{2}w & =(1-\lambda)\,\nabla_{\partial\Omega_{0}}^{2}v\;,\\
\nabla_{\partial\Omega_{0}}\nabla_{e_{n+1}}w & =\nabla_{e_{n+1}}\nabla_{\partial\Omega_{0}}w=(1-\lambda)\,\nabla_{\partial\Omega_{0}}\nabla_{e_{n+1}}v-\lambda'\,\nabla_{\partial\Omega_{0}}v\;,\\
\begin{split}\nabla_{e_{n+1}}^{2}w & =(1-\lambda)\,\nabla_{e_{n+1}}^{2}v+\lambda\,\varepsilon+2\,\lambda'\left(\varepsilon\left(x^{n+1}-\left(a_{\delta}+\frac{2}{\delta}\right)\right)-\nabla_{e_{n+1}}v\right)\\
 & \quad+\lambda''\left(\frac{\varepsilon}{2}\left(a_{\delta}+\frac{2}{\delta}-x^{n+1}\right)^{2}-v\right)\;.
\end{split}
\end{align}
If $0<\varepsilon<\delta^{3}$ is sufficiently small (depending on
$v$), we have $0<w<v$, $\nabla w=(1-\lambda)\nabla v+\Oh(\delta)$,
and $\nabla^{2}w=(1-\lambda)\,\nabla^{2}v+\Oh(\delta)$ on $N\cap\{a_{\delta}<x^{n+1}\le a_{\delta}+\frac{2}{\delta}\}$.

Let $W$ be the hypersurface that is given as the normal graph over
$N$ with representation function $w$. Then, again by Proposition
\ref{prop geometry of normal graphs} 
\begin{align}
g_{W} & =g_{N}+\Oh(\delta)\\
\begin{split}h_{W} & =h_{N}+\nabla^{2}w+\Oh(\delta)=h_{N}+(1-\lambda)\,\nabla^{2}v+\Oh(\delta)\\
 & =(1-\lambda)\,h_{M_{0}}+\lambda\,h_{N}+\Oh(\delta)\;.
\end{split}
\end{align}
Therefore, if we adjust the constants a little bit and choose $\delta$
sufficiently small, there hold $|A_{W}|^{2}\le C$ and $H_{W}\ge c>0$.

The compact approximation $M_{0}^{\delta}$ to $M_{0}$ is now constructed
as follows: In the region $\{x^{n+1}\le a_{\delta}\}$, it coincides
with $M_{0}$. In the region $\{a_{\delta}<x^{n+1}\le a_{\delta}+\frac{2}{\delta}\}$,
we set it equal to $W$. From this part we obtain the part above the
height $a_{\delta}+\frac{2}{\delta}$ by reflection at the hyperplane
$\{x^{n+1}=a_{\delta}+\frac{2}{\delta}\}$. By construction, $M_{0}^{\delta}$
is mirror-symmetrical. In fact, its interior is a vain set if $\varepsilon$
is chosen sufficiently small. This stems from the fact that then $\nabla_{e_{n+1}}w<0$
holds, which one can check from (\ref{eq cnmcf w'<0}). Of course,
$M_{0}^{\delta}$ is smooth and we have $|A_{M_{0}^{\delta}}|^{2}\le C$
and $H_{M_{0}^{\delta}}\ge c>0$.

\paragraph{Uniform $\alpha$-noncollapsedness of the $M_{0}^{\delta}$.}
\begin{lem}
\label{lem uniform noncollapsedness} For $C>0$, $c>0$, and $r>0$
there is $\alpha>0$ such that: If a hypersurface $M$ has bounded
second fundamental form $|A|\le C$, mean curvature $H\ge c$, and
admits local graph representations of radius $r$, then $M$ is $\alpha$-noncollapsed.
\end{lem}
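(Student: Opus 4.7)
The plan is to obtain, at each point $p\in M$, tangent balls of a fixed positive radius $\rho=\rho(C,r)$ on both sides of $M$, and then convert this into $\alpha$-noncollapsedness using the lower bound $H\ge c$. The strategy exploits that a bound on $|A|$ together with a local graphical representation pins down the second-order geometry so tightly that one can explicitly compare $M$ with a sphere of controlled radius.

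Fix $p\in M$ and let $L$ denote the gradient bound from Definition \ref{def admit graph of radius}. After rotating coordinates so that $p=0$ and $T_{p}M=\{x^{n+1}=0\}$, one may, at the cost of shrinking $r$ by a factor depending only on $L$, write $M$ near $p$ as the graph of a function $f\colon B_{r}^{n}\to\R$ with $f(0)=0$, $\nabla f(0)=0$, and $|\nabla f|\le L'$ for some $L'=L'(L)$. Since $h_{ij}=f_{ij}/\sqrt{1+|\nabla f|^{2}}$, the bound $|A|\le C$ upgrades to a bound $|D^{2}f|\le C'$ with $C'=C(1+L'^{2})^{3/2}$. Taylor's theorem then gives $|f(x)|\le\tfrac{1}{2}C'|x|^{2}$ on $B_{r}^{n}$.

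Now choose $\rho:=\min\{1/C',r/2\}$ and compare $M$ with the sphere $S^{+}$ of radius $\rho$ centered at $(0,\rho)$. Its lower hemisphere is the graph of $g(x)=\rho-\sqrt{\rho^{2}-|x|^{2}}\ge|x|^{2}/(2\rho)\ge\tfrac{1}{2}C'|x|^{2}\ge f(x)$ for $|x|\le\rho$, so $S^{+}$ lies entirely on the upper side of $M$ inside $B_{r}(p)$. Since $S^{+}\subset\overline{B}_{2\rho}(p)\subset\overline{B}_{r}(p)$, the closed ball bounded by $S^{+}$ is contained in $B_{r}(p)$ and meets $M$ only at $p$; it is therefore a bona fide interior (resp.\ exterior) tangent ball. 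The symmetric construction with the sphere centered at $(0,-\rho)$ produces a tangent ball on the opposite side. Thus $M$ admits tangent balls of radius $\rho_{0}=\rho_{0}(C,r,L)>0$ on both sides at every point.

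Finally, set $\alpha:=\rho_{0}\cdot c$. Since $H(p)\ge c$, the required noncollapsedness radius satisfies $\alpha/H(p)\le\rho_{0}$, so the tangent ball of radius $\rho_{0}$ witnesses $\alpha$-noncollapsedness at $p$. The only real subtlety is that a locally constructed tangent sphere could a priori intersect $M$ somewhere far away; this is precisely what forces the choice $2\rho\le r$, which keeps the whole sphere inside the neighborhood where the graphical representation controls $M$. Beyond this, the proof is a careful bookkeeping of constants, so I expect no substantial obstacle.
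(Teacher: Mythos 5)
Your overall strategy matches the paper's: pick $\rho=\min\{r/2,1/(\text{curvature constant})\}$, verify via local graph representation that the spheres $\overline{B}_{\rho}(p\pm\rho\,\nu(p))$ touch $M$ only at $p$, and then set $\alpha=c\,\rho$. The paper does exactly this, just without spelling out the Taylor-expansion comparison that you carry out carefully.

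There is one genuine flaw to flag, however. You write ``let $L$ denote the gradient bound from Definition~\ref{def admit graph of radius}'' and then let your constants $L'$, $C'=C(1+L'^2)^{3/2}$, $\rho$, and ultimately $\alpha$ depend on $L$. But Definition~\ref{def admit graph of radius} contains no gradient bound at all; it only asserts that $B_r(x)\cap M$ is graphical over $T_xM$. If $L$ were a free parameter your $\alpha$ would depend on it, which would contradict the statement of the lemma (where $\alpha$ is supposed to depend only on $C$, $c$, $r$). The gap is easily repaired: since $\nabla f(0)=0$ and $|A|\le C$ controls the rate at which the unit normal rotates along $M$, within intrinsic distance $\pi/(4C)$ of $p$ the normal deviates from $e_{n+1}$ by at most $\pi/4$, giving $|\nabla f|\le 1$ there. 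Thus one may take $L'=1$ at the cost of further shrinking $r$ to $\min\{r,\,c_n/C\}$ for a dimensional constant $c_n$, which keeps $\rho$ and $\alpha$ depending only on $C$ and $r$. With that correction your Taylor/sphere comparison is a valid, more explicit rendering of what the paper asserts in one line.
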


\begin{proof}
Let $x\in M$. Then, inside $B_{r}(x)$, $M$ is a graph over $\textrm{T}_{x}M$.
Let $\rho\coloneqq\min\{\frac{r}{2},\,\frac{1}{C}\}$. Then, the two
closed balls $\overline{B}_{\rho}(x\pm\rho\,\nu(x))$ intersect $M$
only in $x$. Thus, if we choose $\alpha\coloneqq c\,\rho$, $x$
admits interior and exterior balls of radius $\frac{\alpha}{H(x)}\le\frac{\alpha}{c}=\rho$.
Because $\alpha$ does not depend on $x$, this demonstrates that
$M$ is $\alpha$-noncollapsed. 
\end{proof}
To apply the lemma it remains to show that the $M_{0}^{\delta}$ admit
local graph representations of a radius $r$ that is independent of
$\delta$. To this end, we realize that the $M_{0}^{\delta}$ are
normal graphs over $\partial\Omega_{0}\times\R$ for the most part,
with two ``$M_{0}$-caps.'' By Proposition \ref{prop ntg} the normal
graph part of each $M_{0}^{\delta}$ admits local graph representations
of a controlled radius independent of $\delta$. Therefore, Lemma
\ref{lem uniform noncollapsedness} is applicable and yields an $\alpha>0$
such that the $M_{0}^{\delta}$ are $\alpha$-noncollapsed.

\paragraph{Evolution of the $M_{0}^{\delta}$ and passing to a limit.}

So far, we have constructed smooth closed hypersurfaces $M_{0}^{\delta}$
such that 
\begin{itemize}
\item $M_{0}^{\delta}$ coincides with $M_{0}$ below the height $a_{\delta}$,
\item %
\item the $M_{0}^{\delta}$ are mean convex and $\alpha$-noncollapsed with
$\alpha>0$ independent of $\delta$, 
\item The $M_{0}^{\delta}$ are symmetric double graphs with respect to
the hyperplanes $\{x:x^{n+1}=a_{\delta}+\frac{2}{\delta}\}$ in the
sense of \cite{Mau2}.%
\end{itemize}
We can flow $M_{0}^{\delta}$ by the mean curvature flow as in \cite{Mau2}:
Singularities only occur on the hyperplane $\{x:x^{n+1}=a_{\delta}+\frac{2}{\delta}\}$
and the surfaces stay symmetric double graphs. In particular, the
part below the hyperplane is smooth and graphical.

By Proposition \ref{prop ncM uniqueness}, the weak solution is unique.
In particular, $(M_{t}^{\delta})_{t}$ coincides with the level-set
flow. By Theorem \ref{thm ncMCF level-set flow alpha-noncollapsed},
the level-set flow is $\alpha$-non\-col\-lapsed for all $t\ge0$
with the same $\alpha$ as for $M_{0}^{\delta}$. Hence, $\left(M_{t}^{\delta}\right)_{t\in[0,\infty)}$
is $\alpha$-noncollapsed.

We now may use the a priori estimates and the Arzelà-Ascoli argument
as employed before in Section \ref{sec MCFwS} to pass to a limit
as we let $\delta\to0$. The limit is a smooth mean curvature flow
$(M_{t})_{t}$ starting from $M_{0}$ that is graphical and provides
us with a mean curvature flow without singularities $(u,\Omega)$
with initial value $(u_{0},\Omega_{0})$. Clearly, the approximation
outlined in Section \ref{sec MCFwS} would have given us this result
as well. But importantly, the uniform $\alpha$-noncollapsedness of
the approximators carries over to the limit because $M_{t}^{\delta}\to M_{t}$
locally smoothly, proving that $M_{t}$ is $\alpha$-noncollapsed.

Thus, we have proven the following theorem.
\begin{thm}
Let $\Omega_{0}\subset\R^{n}$ be a bounded, smooth, and mean convex
domain. Let $u_{0}\colon\Omega_{0}\to\R$ be a positive and smooth
function such that $u_{0}(x)\to\infty$ for $x\to\partial\Omega_{0}$.
Let $M_{0}\coloneqq\graph u_{0}$. We assume that there are constants
$C,c>0$ such that $|A_{M_{0}}|\le C$ and $H_{M_{0}}\ge c$. Then
there exists a mean curvature flow without singularities $(u,\Omega)$
with initial values $(u_{0},\Omega_{0})$ such that $(M_{t})_{t\in[0,\infty)}$
is $\alpha$-noncollapsed for some $\alpha>0$.
\end{thm}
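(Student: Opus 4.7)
The strategy, following the exposition preceding the theorem, is to approximate $M_0$ by a sequence of smooth, closed, uniformly mean convex and uniformly $\alpha$-noncollapsed hypersurfaces $M_0^\delta$, to evolve each $M_0^\delta$ by its (weak) mean curvature flow while preserving a symmetric double-graph structure, and to extract a subsequential smooth limit as $\delta \to 0$.

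For the construction of $M_0^\delta$, pick $\delta > 0$ small and set $a_\delta \coloneqq \max\{u_0(x) : \dist(x,\partial\Omega_0) \ge \delta\}$, so that $M_0 \cap \{x^{n+1} > a_\delta\}$ lies in a thin tubular neighbourhood of $N \coloneqq \partial\Omega_0 \times \R$. By Lemma \ref{lem close hypersurfaces} it is a normal graph over $N$ with representation function $v = \Oh(\delta)$, $|\nabla v| = \Oh(\sqrt{\delta})$, $|\nabla^2 v| \le C$. Using a cutoff $\lambda_\delta$ in the $x^{n+1}$-direction satisfying $|\lambda'| + |\lambda''|^{1/2} = \Oh(\delta)$, interpolate $v$ with a small quadratic $\tfrac{\varepsilon}{2}(a_\delta + \tfrac{2}{\delta} - x^{n+1})^2$ ($0 < \varepsilon < \delta^3$) to obtain a new normal graph representative $w$ on $N \cap \{a_\delta < x^{n+1} \le a_\delta + \tfrac{2}{\delta}\}$. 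Proposition \ref{prop geometry of normal graphs} then controls the geometry of the resulting normal graph $W$ over $N$, yielding $|A_W|^2 \le C'$ and $H_W \ge c' > 0$ after shrinking $\delta, \varepsilon$. Define $M_0^\delta$ by piecing together $M_0$ below height $a_\delta$, $W$ in the transition strip, and the reflection of $W$ across the hyperplane $\{x^{n+1} = a_\delta + \tfrac{2}{\delta}\}$ above. The sign $\partial_{e_{n+1}} w < 0$, ensured by $\varepsilon < \delta^3$, is what makes $M_0^\delta$ a genuine symmetric double graph.

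Each $M_0^\delta$ then has $|A| \le C'$, $H \ge c' > 0$, and its cylindrical part admits local graph representations of a radius $r > 0$ independent of $\delta$ by Proposition \ref{prop ntg}; on the two caps we already have the required radius from $M_0$ itself. Lemma \ref{lem uniform noncollapsedness} furnishes a uniform $\alpha > 0$ such that every $M_0^\delta$ is $\alpha$-noncollapsed. I would then flow each $M_0^\delta$ weakly as in \cite{Mau2}: singularities can only occur on the symmetry hyperplane, so the flow stays a smooth graph below that hyperplane for all times. Uniqueness of the weak solution (Proposition \ref{prop ncM uniqueness}) identifies this flow with the level-set flow, and Theorem \ref{thm ncMCF level-set flow alpha-noncollapsed} propagates $\alpha$-noncollapsedness with the same $\alpha$ for all $t \ge 0$.

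Finally, I would use the functions $v_\delta$ representing the graphical part of $(M_t^\delta)_{t \ge 0}$ as an approximating sequence in the sense of Section \ref{sec MCFwS}, applying the standard a priori estimates and the Arzelà--Ascoli argument recalled there to extract a limit $(u,\Omega)$, which is a mean curvature flow without singularities with initial data $(u_0,\Omega_0)$. Because $M_t^\delta \to M_t$ locally smoothly, the uniform $\alpha$-noncollapsedness of the approximators transfers to every $M_t$. The main obstacle is the careful engineering of $w$ in the transition strip: the competing requirements of a true reflection symmetry ($\partial_{e_{n+1}} w < 0$), of uniformly bounded $|A|$, and of uniformly positive $H$ on $M_0^\delta$ must all hold simultaneously, which is precisely the role of the delicate scalings $\varepsilon < \delta^3$ and $|\lambda'| + |\lambda''|^{1/2} = \Oh(\delta)$. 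Once this step is in hand, the remainder is a routine application of the appendix tools together with the MCFwS limit machinery.
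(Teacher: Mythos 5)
Your proposal is correct and follows essentially the same route as the paper: the compact approximators $M_0^\delta$ built from the normal graph over $\partial\Omega_0\times\R$ with the cutoff interpolation and reflection, uniform $\alpha$-noncollapsedness via Proposition \ref{prop ntg} and Lemma \ref{lem uniform noncollapsedness}, evolution as symmetric double graphs identified with the level-set flow, preservation of $\alpha$-noncollapsedness, and the Arzelà--Ascoli limit passing the noncollapsedness to $(M_t)_t$. No substantive differences from the paper's own argument.
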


\section{Barrier over annuli}

\label{sec barrier} In the present section we assume $n\ge2$.

Central to this section is a barrier which is defined over annuli.
This barrier enables us to prove estimates for a mean curvature flow
in terms of the height over an annulus earlier in time. This will
be exploited to obtain various results.

\begin{figure}[h]
\label{fig annulus barrier}
\begin{centering}
\includegraphics[height=3cm]{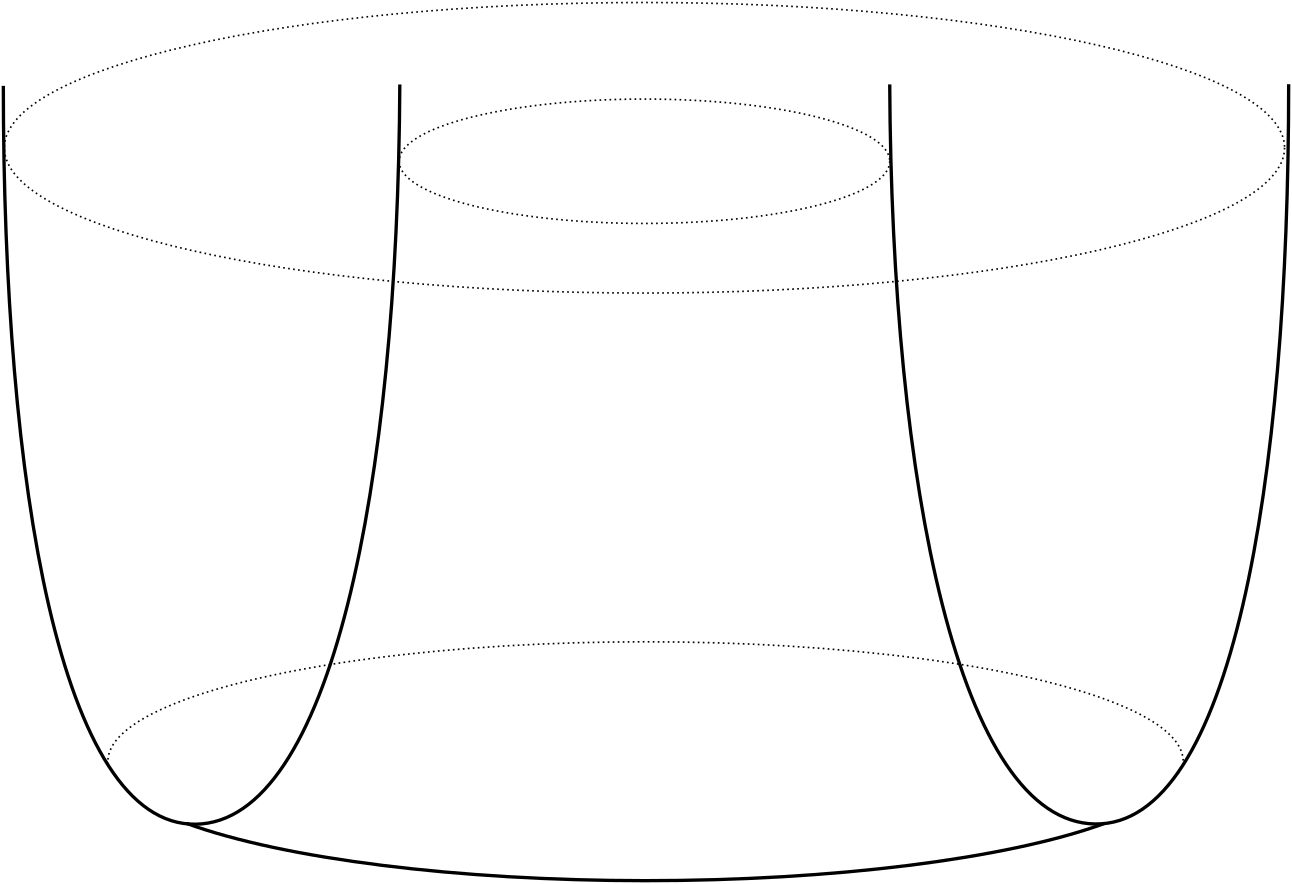}
\par\end{centering}
\caption{Sketch of the barrier.}
\end{figure}

The upper barrier has the following geometry (Fig.~\ref{fig annulus barrier}.1).
Initially start with an annulus in $n$-dimensional Euclidean space.
Over this, we consider a special function which tends to infinity
at the boundary of the annulus. As time goes by, the initial annulus
shrinks by mean curvature flow and the function is adjusted accordingly
to have the shrinking annulus as its domain; while at the same time,
the function is shifted upwards.

In the first section, we will write down the barrier and prove that
it really functions as a barrier. The upshot is Theorem \ref{thm annulus estimate}.
Afterwards, in the second section, we will utilize the barrier to
construct an ``ugly'' example of mean curvature flow without singularities
with wild oscillations which persist up to the vanishing time. As
another application of the barrier, we will prove in the third section
a certain relationship between the spatial asymptotics of a mean curvature
flow without singularities and its temporal asymptotics at the vanishing
time.

\subsection{Construction of the barrier}

The barrier construction starts with a hypersurface that is obtained
from a grim reaper curve which is rotated around the $x^{n+1}$-axis.
The grim reaper is a well-known special solution of curve-shortening
flow (one-dimensional mean curvature flow). It has the explicit graphical
representation ($\lambda>0$) 
\begin{equation}
u_{\mathrm{gr}}(x,t)=-\frac{1}{\lambda}\log\cos(\lambda\,x)+\lambda\,t\qquad\text{for }x\in\left(-\frac{\pi}{2\lambda},\frac{\pi}{2\lambda}\right),\;t\in\R\;.
\end{equation}
In this form $\graph u_{\mathrm{gr}}$ translates with speed $\lambda$
upwards.

Our barrier construction is motivated by this solution and initially
we start with a grim reaper curve rotated around a vertical axis.
Let us have a look at the barrier function now: 
\global\long\def\rad{\sqrt{|x|^{2}+2(n-1)t}}%
 
\begin{equation}
w(x,t)=-\frac{1}{\lambda}\log\cos\left[\lambda\left(\rad-R\right)\right]+\lambda\,t+f(t)\;,
\end{equation}
where $R,\lambda>0$. As can be seen, $w$ is similar to $u_{\mathrm{gr}}$.
But $x$ is replaced by the term $\rad$ which is motivated by the
radius of an $(n-1)$-dimensional sphere flowing by its mean curvature.
The function $f(t)$ will be adequately chosen to make $w$ satisfy
the correct differential inequality. We will give two possible choices
for $f$, one rather simple, the other more elaborate but much smaller
as $\lambda$ becomes large. However, we include the second one only
for the sake of completeness because in our applications $\lambda\,t$
dominates $f(t)$ for large $\lambda$ in either case.

Before we start proving the differential inequality, we must talk
about the domain of definition of $w$. We set 
\begin{align}
 & t\in I(R)\coloneqq[0,T(R)]\coloneqq\left[0,\frac{R^{2}}{2(n-1)}\right]\\
 & x\in A_{t}(R,\lambda)\coloneqq\left\{ x\in\R^{n}\colon\left(R-\frac{\pi}{2\lambda}\right)<\rad<\left(R+\frac{\pi}{2\lambda}\right)\right\} .\label{eq def A_t}
\end{align}
$A_{t}(R,\lambda)$ is an annulus or a ball, depending on $t$, $R$,
and $\lambda$.

\paragraph{Differential inequality.}

\global\long\def\eck{[\;\cdot\;]}%
 
\global\long\def\sqq{\sqrt{\rule{0em}{1ex}\quad}}%
 To keep the computations accessible, we will make the abbreviations
$\sqq$ for $\rad$ and $\eck$ for $\lambda\left(\sqq-R\right)$.

To show that $w$ is an upper barrier, we must prove 
\begin{equation}
-\dot{w}+\left(\delta^{ij}-\frac{w^{i}w^{j}}{1+|\nabla w|^{2}}\right)w_{ij}\stackrel{!}{\le}0\;.\label{eq differential inequality}
\end{equation}
First, we determine the derivatives. 
\begin{align}
\dot{w} & =\tan\eck\,\frac{n-1}{\sqq}+\lambda+f'(t)\;,\\
w_{i} & =\tan\eck\,\frac{x_{i}}{\sqq}\;,\\
w_{ij} & =\lambda\left(1+\tan^{2}\eck\right)\frac{x_{i}\,x_{j}}{\sqq^{2}}+\frac{\tan\eck}{\sqq}\left(\delta_{ij}-\frac{x_{i}\,x_{j}}{\sqq^{2}}\right).
\end{align}
Substituting into (\ref{eq differential inequality}) yields 
\begin{equation}
\begin{split} & -\dot{w}+\left(\delta^{ij}-\frac{w^{i}w^{j}}{1+|\nabla w|^{2}}\right)w_{ij}\\
 & \quad=-\dot{w}+\frac{1}{1+|\nabla w|^{2}}\bigg((1+|\nabla w|^{2})\delta^{ij}-w^{i}w^{j}\bigg)w_{ij}\\
 & \quad=-\lambda-f'(t)-\tan\eck\,\frac{n-1}{\sqq}\\
 & \qquad+\frac{\lambda\,(1+\tan^{2}\eck)}{1+\tan^{2}\eck\frac{|x|^{2}}{\sqq^{2}}}\left(\left(1+\tan^{2}\eck\frac{|x|^{2}}{\sqq^{2}}\right)\frac{|x|^{2}}{\sqq^{2}}-\tan^{2}\eck\frac{|x|^{4}}{\sqq^{4}}\right)\\
 & \qquad+\frac{\frac{\tan\eck}{\sqq}}{1+\tan^{2}\eck\frac{|x|^{2}}{\sqq^{2}}}\left(\left(1+\tan^{2}\eck\frac{|x|^{2}}{\sqq^{2}}\right)\left(n-\frac{|x|^{2}}{\sqq^{2}}\right)\right.\\
 & \hspace{4.5cm}\left.-\tan^{2}\eck\left(\frac{|x|^{2}}{\sqq^{2}}-\frac{|x|^{4}}{\sqq^{4}}\right)\right)\\
 & \quad=-\lambda-f'(t)-\frac{\tan\eck}{\sqq}(n-1)+\frac{\lambda\,(1+\tan^{2}\eck)}{1+\tan^{2}\eck\frac{|x|^{2}}{\sqq^{2}}}\left(\frac{|x|^{2}}{\sqq^{2}}\right)\\
 & \qquad+\frac{\frac{\tan\eck}{\sqq}}{1+\tan^{2}\eck\frac{|x|^{2}}{\sqq^{2}}}\left(\left(1+\tan^{2}\eck\frac{|x|^{2}}{\sqq^{2}}\right)(n-1)+\left(1-\frac{|x|^{2}}{\sqq^{2}}\right)\right)\\
 & \quad=-f'(t)+\frac{\left(1-\frac{|x|^{2}}{\sqq^{2}}\right)}{1+\tan^{2}\eck\frac{|x|^{2}}{\sqq^{2}}}\left(\frac{\tan\eck}{\sqq}-\lambda\right)\stackrel{!}{\le}0\;.
\end{split}
\end{equation}
In the case $\frac{\tan\eck}{\sqq}\le\lambda$, the inequality clearly
holds. Thus, let us from now on assume $\frac{\tan\eck}{\sqq}>\lambda$.

Neglecting terms with the appropriate signs, we aim to show 
\begin{equation}
\frac{\frac{\tan\eck}{\sqq}}{1+\frac{\tan^{2}\eck}{\sqq^{2}}|x|^{2}}\stackrel{!}{\le}f'(t)\;.\label{eq aim}
\end{equation}
If we can choose $f$ in such a way then the differential inequality
(\ref{eq differential inequality}) follows.

We are going to utilize the following lemma.
\begin{lem}
\label{lem A} Let $J\subset\R_{>0}$ be an interval and $y\colon J\to\R_{>0}$
a monotonically increasing function, which is continuous and surjective.
Then 
\begin{equation}
\frac{y(r)}{1+y^{2}(r)\,r^{2}}\le\frac{1}{a}
\end{equation}
holds for all $r\in J$, where $a$ is the unique solution of $y(a)=\frac{1}{a}$.
\end{lem}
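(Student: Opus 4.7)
The plan is to first verify that the anchor point $a$ is well-defined, and then to prove the inequality by splitting at $r = a$ and using the monotonicity of $y$ in each regime.

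For existence and uniqueness of $a$, I would consider the auxiliary function $h(r) := r\,y(r)$ on $J$. As a product of two strictly positive, strictly increasing continuous functions, $h$ is itself strictly increasing and continuous. Surjectivity of $y$ onto $\R_{>0}$ together with the positivity of $r$ forces $h(J) = (0, \infty)$: at one endpoint $y(r) \to 0$ drags $h$ to $0$, at the other $y(r) \to \infty$ sends $h$ to $\infty$. Hence $h(a) = 1$, i.e., $y(a) = 1/a$, has a unique solution $a \in J$.

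With $a$ in hand the inequality decomposes cleanly. If $r \le a$, monotonicity of $y$ gives $y(r) \le y(a) = 1/a$, and dropping the nonnegative term $y^2(r)\,r^2$ from the denominator yields
\[
\frac{y(r)}{1 + y^2(r)\,r^2} \;\le\; y(r) \;\le\; \frac{1}{a}.
\]
If $r \ge a$, monotonicity of both $y$ and the identity give $y(r)\,r \ge y(a)\,a = 1$, and discarding the $1$ in the denominator yields
\[
\frac{y(r)}{1 + y^2(r)\,r^2} \;\le\; \frac{1}{y(r)\,r^{2}} \;=\; \frac{1}{(y(r)\,r)\,r} \;\le\; \frac{1}{r} \;\le\; \frac{1}{a}.
\]

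I do not expect any real obstacle: the statement is elementary. The only conceptual point is recognizing that the balance equation $y(a)\,a = 1$ picks out exactly the scale at which one switches from the trivial estimate $\frac{y}{1+y^{2}r^{2}} \le y$ to the equally trivial estimate $\frac{y}{1+y^{2}r^{2}} \le \frac{1}{y\,r^{2}}$. (Incidentally, since $t \mapsto \frac{t}{1+t^{2}r^{2}}$ peaks at $t=1/r$ with value $\frac{1}{2r}$, one could even sharpen the estimate on $\{r \ge a\}$ to $\frac{1}{2a}$, but the stated bound is all that the subsequent barrier argument requires.)
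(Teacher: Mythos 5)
Your proof is correct, and the overall structure (verify existence/uniqueness of $a$, then split at $r=a$) matches the paper's. The only genuine difference is in the second case $r\ge a$. The paper replaces $r^{2}$ by $a^{2}$ in the denominator and then maximizes the resulting expression $y\mapsto\frac{y}{1+y^{2}a^{2}}$ over $y\in\R_{>0}$ via a small calculus argument, obtaining the sharper constant $\frac{1}{2a}$ before coarsening to $\frac{1}{a}$. You instead use the monotone relation $y(r)\,r\ge y(a)\,a=1$ directly and discard the $1$ in the denominator, giving $\frac{y(r)}{1+y^{2}(r)r^{2}}\le\frac{1}{y(r)r^{2}}\le\frac{1}{r}\le\frac{1}{a}$. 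Your route is more elementary (no optimization) and yields the same final bound; the paper's route produces the slightly stronger intermediate estimate $\frac{1}{2a}$, though the lemma does not use it. One minor imprecision: the hypothesis is only that $y$ is monotonically increasing (non-decreasing), not strictly increasing as you write, but your auxiliary function $h(r)=r\,y(r)$ is still strictly increasing since $r_{1}<r_{2}$ gives $r_{1}y(r_{1})<r_{2}y(r_{1})\le r_{2}y(r_{2})$, so the conclusion about $a$ stands.
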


\begin{proof}
It is not hard to see that there is a unique solution $a$ of $y(a)=\frac{1}{a}$
given the hypothesis on $y$.

We distinguish two cases. If $r\le a$ holds, we find 
\begin{equation}
\frac{y(r)}{1+y^{2}(r)\,r^{2}}\le y(r)\le y(a)=\frac{1}{a}\;.
\end{equation}
In the case that $r>a$ holds, we have 
\begin{equation}
\frac{y(r)}{1+y^{2}(r)\,r^{2}}\le\frac{y(r)}{1+y^{2}(r)\,a^{2}}\;.\label{eq lem A 1}
\end{equation}
The expression $\frac{y}{1+y^{2}a^{2}}$ tends to $0$ for both $y\to0$
and $y\to\infty$. Hence, it attains its maximum at an interior point
$b$. We can determine $b$ from the extremality condition 
\begin{equation}
0=\left.\frac{\mathrm{d}}{\mathrm{d}y}\left(\frac{y}{1+y^{2}a^{2}}\right)\right|_{y=b}=\frac{1+b^{2}a^{2}-2b^{2}a^{2}}{(1+b^{2}a^{2})^{2}}=\frac{1-b^{2}a^{2}}{(1+b^{2}a^{2})^{2}}\;.
\end{equation}
We infer that $b=\frac{1}{a}$ is the maximum point. That leads to
\begin{equation}
\frac{y}{1+y^{2}a^{2}}\le\frac{b}{1+b^{2}a^{2}}=\frac{1/a}{1+1}<\frac{1}{a}\label{eq lem A 2}
\end{equation}
for any $y\in\R_{>0}$. Bringing together (\ref{eq lem A 1}) and
(\ref{eq lem A 2}), the assertion follows.
\end{proof}

\paragraph{Application of the lemma.}

We shall derive (\ref{eq aim}) with the help of Lemma \ref{lem A}.
Clearly, $\frac{\tan\eck}{\sqq}$, seen as a function of $r\coloneqq|x|$,
will take the role of $y(r)$. Notice that we fix $t\in I(R)$ here.
The interval $J$ is appropriately chosen to be 
\begin{equation}
J=\left(\sqrt{R^{2}-2(n-1)t},\sqrt{\left(R+\frac{\pi}{2\lambda}\right)^{2}-2(n-1)t}\right),
\end{equation}
such that $\eck\coloneqq\lambda(\sqq-R)\in(0,\frac{\pi}{2})$. This
makes $y(r)=\frac{\tan\eck}{\sqq}$ well defined on $J$ and surjective
onto $\R_{>0}$.

Now we show, that our choice of $y$ is monotonically increasing.
The derivative with respect to $\sqq$ is 
\begin{equation}
\begin{split}\frac{\mathrm{d}}{\mathrm{d}\sqq}\frac{\tan[\lambda(\sqq-R)]}{\sqq} & =\frac{\lambda}{\sqq\,\cos^{2}\eck}-\frac{\tan\eck}{\sqq^{2}}=\frac{\lambda\sqq-\sin\eck\cos\eck}{\sqq^{2}\cos^{2}\eck}\\
 & \ge\frac{\lambda\sqq-\eck
}{\sqq^{2}\cos^{2}\eck}=\frac{\lambda\,R}{\sqq^{2}\cos^{2}\eck}%
>0\;.
\end{split}
\end{equation}
So $\frac{\tan\eck}{\sqq}$ is increasing as a function of $\sqq$.
And because $\sqq$ is increasing in $r$, $y(r)=\frac{\tan\eck}{\sqq}$
is increasing in $r$ as well.

Finally, Lemma \ref{lem A} is applicable and yields 
\begin{equation}
\frac{\frac{\tan\eck}{\sqq}}{1+\frac{\tan^{2}\eck}{\sqq^{2}}\,|x|^{2}}\le\frac{1}{a_{t}}\;,\label{eq lem A yields}
\end{equation}
where $a_{t}$ is the unique solution in $J$ of the equation 
\global\long\def\rada{\sqrt{a_{t}^{2}+2(n-1)t}}%
 
\begin{equation}
\frac{\tan\left[\lambda\left(\rada-R\right)\right]}{\rada}=\frac{1}{a_{t}}\;.\label{eq a_t sol of}
\end{equation}
To further estimate $\frac{1}{a_{t}}$ in (\ref{eq lem A yields}),
we must extract information from (\ref{eq a_t sol of}). We will now
demonstrate two possible ways to do so.

\paragraph{First choice for $f$.}

We notice that 
\begin{equation}
\tan\left(\lambda\left(\rada-R\right)\right)=\frac{\rada}{a_{t}}\ge1=\tan\left(\frac{\pi}{4}\right)\,.\label{eq annulus pi/4}
\end{equation}
Consequently, and by $2(n-1)t\le R^{2}$, 
\begin{equation}
a_{t}^{2}\ge\left(\frac{\pi}{4\lambda}+R\right)^{2}-2(n-1)t\ge\frac{\pi R}{2\lambda}\;.\label{eq 1. estimate of a_t}
\end{equation}
Choosing 
\begin{equation}
\boxed{f(t)\coloneqq\sqrt{\frac{2\lambda}{\pi R}}\;t}\label{eq 1. choice of f}
\end{equation}
we infer from (\ref{eq lem A yields}) and (\ref{eq 1. estimate of a_t})
\begin{equation}
\frac{\frac{\tan\eck}{\sqq}}{1+\frac{\tan^{2}\eck}{\sqq^{2}}\,|x|^{2}}\le\frac{1}{a_{t}}\le\sqrt{\frac{2\lambda}{\pi R}}=f'(t)\;.
\end{equation}
I.e., (\ref{eq aim}), and hence (\ref{eq differential inequality})
hold.

\paragraph{Second choice for $f$.}

We write 
\begin{equation}
s\coloneqq\sqrt{a_{t}^{2}+2(n-1)t}\;.\label{eq def s}
\end{equation}
We note that $s$ depends on $t$ and lies in the range $\left[R+\tfrac{\pi}{4\lambda},R+\tfrac{\pi}{2\lambda}\right)$,
which can be seen from (\ref{eq annulus pi/4}). With $s$ at hand
we can rewrite (\ref{eq a_t sol of}) as 
\begin{equation}
\frac{1}{a_{t}}=\frac{1}{\sqrt{s^{2}-2(n-1)t}}=\frac{\tan(\lambda\,(s-R))}{s}\;.\label{eq for a_t with s}
\end{equation}
This can easily be solved for $2(n-1)t$. We write $\tau(s)$ for
$2(n-1)t$ viewed as a function of $s$: 
\begin{equation}
\tau(s)\coloneqq s^{2}\left(1-\frac{1}{\tan^{2}(\lambda(s-R))}\right),\qquad s\in\left[R+\tfrac{\pi}{4\lambda},\,R+\tfrac{\pi}{2\lambda}\right)\;.
\end{equation}
We continuously extend $\tau$ to the closed interval through $\tau(R+\frac{\pi}{2\lambda})=(R+\frac{\pi}{2\lambda})^{2}$.
We will estimate $\tau(s)$ from below by 
\begin{equation}
\sigma(s)\coloneqq\left(1-4\lambda\left(R+\frac{\pi}{2\lambda}\right)\right)s^{2}+4\lambda s^{3}\;;
\end{equation}
$\sigma$ is the solution of $\sigma'(s)=\frac{2}{s}\sigma(s)+4\lambda s^{2}$
with $\sigma\left(R+\tfrac{\pi}{2\lambda}\right)=\tau\left(R+\tfrac{\pi}{2\lambda}\right)=\left(R+\tfrac{\pi}{2\lambda}\right)^{2}$.
The function $\tau$ fulfills the differential inequality 
\begin{equation}
\begin{split}\tau'(s) & =2s\left(1-\frac{1}{\tan^{2}(\lambda\,(s-R))}\right)+2\lambda\,s^{2}\frac{1+\tan^{2}(\lambda\,(s-R))}{\tan^{3}(\lambda\,(s-R))}\\
 & \le\frac{2}{s}\tau(s)+4\lambda\,s^{2}\;.
\end{split}
\end{equation}
It follows that 
\begin{equation}
\frac{\mathrm{d}}{\mathrm{d}s}(\tau(s)-\sigma(s))\le\frac{2}{s}\tau(s)+4\lambda\,s^{2}-\left(\frac{2}{s}\sigma(s)+4\lambda\,s^{2}\right)=\frac{2}{s}(\tau(s)-\sigma(s))\;.
\end{equation}
In fact the inequality is strict except for $s=R+\frac{\pi}{4\lambda}$.
Thus, whenever $\tau(s)=\sigma(s)$ for some $s\in(R+\frac{\pi}{4\lambda},R+\frac{\pi}{2\lambda})$,
we have $\tau'(s)<\sigma'(s)$ and there is an $\varepsilon>0$ such
that $\tau>\sigma$ on $(s-\varepsilon,s)$. Considering $\sup\{s\colon\tau(s)<\sigma(s)\}$,
it is now easy to show that $\tau\ge\sigma$ on the whole interval
$\left[R+\frac{\pi}{4\lambda},\,R+\frac{\pi}{2\lambda}\right]$. So
we have shown for $s\in\left[R+\frac{\pi}{4\lambda},\,R+\frac{\pi}{2\lambda}\right]$
\begin{align}
\tau(s) & \ge\left(1-4\lambda\left(R+\frac{\pi}{2\lambda}\right)\right)s^{2}+4\lambda s^{3}\;,\\
\frac{\tau(s)}{\left(R+\frac{\pi}{4\lambda}\right)^{2}} & \ge\frac{\tau(s)}{s^{2}}\ge1-4\lambda\left(R+\frac{\pi}{2\lambda}\right)+4\lambda\,s\;,\\
\begin{split}s & \le R+\frac{\pi}{2\lambda}-\frac{1}{4\lambda}\left(1-\frac{\tau(s)}{\left(R+\frac{\pi}{4\lambda}\right)^{2}}\right)\\
 & =R+\frac{1}{\lambda}\left(\frac{\pi}{2}-\frac{1}{4}\left(1-\frac{\tau(s)}{\left(R+\frac{\pi}{4\lambda}\right)^{2}}\right)\right).
\end{split}
\label{eq upper bound for s}
\end{align}

Let us return to (\ref{eq def s}), the value for $s$ we are interested
in. Let us also remember that $\tau(s)=2(n-1)t$ holds in this context.
With $s\ge R+\frac{\pi}{4\lambda}$ and (\ref{eq upper bound for s})
we can continue from (\ref{eq for a_t with s})
\begin{equation}
\frac{1}{a_{t}}\le\frac{\tan(\lambda(s-R))}{s}\le\frac{1}{R+\frac{\pi}{4\lambda}}\,\tan\left(\frac{\pi}{2}-\frac{1}{4}\left(1-\frac{2(n-1)t}{\left(R+\frac{\pi}{4\lambda}\right)^{2}}\right)\right).\label{eq 2. estimate for a_t}
\end{equation}
We integrate the last term: 
\begin{equation}
\boxed{f(t)\coloneqq\frac{2\left(R+\frac{\pi}{4\lambda}\right)}{n-1}\left[-\log\sin\left(\frac{1}{4}\left(1-\frac{2(n-1)t}{\left(R+\frac{\pi}{4\lambda}\right)^{2}}\right)\right)+\log\sin\left(\frac{1}{4}\right)\right].}\label{eq 2. choice of f}
\end{equation}
Then we obtain (\ref{eq aim}) from (\ref{eq 2. estimate for a_t})
and (\ref{eq lem A yields}). So the differential inequality (\ref{eq differential inequality})
for $w$ follows with this choice for $f$.

\paragraph{Asymptotics of $f$ for large $\lambda$.}

The merit of the second choice for $f$ is the milder growth in $\lambda$.
While (\ref{eq 1. choice of f}) clearly grows like $\sqrt{\lambda}$,
(\ref{eq 2. choice of f}) only grows like $\log(\lambda)$. To see
this let us evaluate $f$ at the final time $T(R)$ of the time interval
under consideration ($T(R)\coloneqq\frac{R^{2}}{2(n-1)}$). Let $f_{1}$
be the first choice, (\ref{eq 1. choice of f}), and let $f_{2}$
be the second, (\ref{eq 2. choice of f}). Then we have 
\begin{equation}
f_{1}(T(R))=\sqrt{\frac{2\lambda}{\pi\,R}}\;\frac{R^{2}}{2(n-1)}=\frac{R}{4(n-1)}\sqrt{\frac{8R\,\lambda}{\pi}}\;.
\end{equation}
On the other hand, for $\lambda\to\infty$: 
\global\long\def\Riv{R+\frac{\pi}{4\lambda}}%
 
\begin{equation}
\begin{split}f_{2}(T(R)) & =\frac{2\left(\Riv\right)}{n-1}\left[-\log\sin\left(\frac{1}{4}\frac{\left(\Riv\right)^{2}-R^{2}}{\left(\Riv\right)^{2}}\right)+\log\sin(\frac{1}{4})\right]\\
 & \simeq-\frac{2R}{n-1}\;\log\sin\left(\frac{1}{4}\frac{\frac{\pi}{2\lambda}R}{R^{2}}\right)\\
 & \simeq-\frac{2R}{n-1}\;\log\left(\frac{\pi}{8R\,\lambda}\right)\\
 & =\frac{2R}{n-1}\;\log\left(\frac{8R\,\lambda}{\pi}\right)\;.
\end{split}
\end{equation}

\paragraph{Barrier property and annulus dependent estimate.}

So far we have proven that for any choice of $f$ ((\ref{eq 1. choice of f})
or (\ref{eq 2. choice of f})) $w$ fulfills the differential inequality
(\ref{eq differential inequality}) for all $t\in I(R)$ and $x\in A_{t}(R,\lambda)\setminus\{0\}$.
For this reason, a mean curvature flow cannot touch $\graph w(\cdot,t)$
from below at any interior point if the flow is disjoint initially.
A special case, however, is the point $(0,w(0,t))$ (if defined) because
$w$ does not solve (\ref{eq differential inequality}) at $x=0$.
In fact, $w$ is not even differentiable at $x=0$. But still, no
smooth surface can touch $\graph w$ from below at $(0,w(0,t))$:
All of the directional derivatives of $w$ at zero are defined and
they all are strictly negative, and hence there is not even a smooth
curve, let alone a hypersurface, that can touch $w$ from below at
that point.

It remains to exclude that $\graph w(\cdot,t)$ is crossed at infinity.
Then $\graph w(\cdot,t)$ is a barrier and we obtain the following
annulus dependent estimate. 
\begin{thm}
\label{thm annulus estimate} Let $R,\lambda>0$. Recall the definition
of $A_{t}(R,\lambda)$ in (\ref{eq def A_t}). Let $(u,\Omega)$ be
a solution of mean curvature flow without singularities in $\R^{n+1}$
($n\ge2$). Suppose $A_{0}(R,\lambda)\Subset\Omega_{0}$ and 
\begin{equation}
u(x,0)\le-\frac{1}{\lambda}\log\cos\big[\lambda(|x|-R)\big]\qquad\text{ for all }x\in A_{0}(R,\lambda)\;.
\end{equation}
Then for $t\in I(R)=[0,\frac{R^{2}}{2(n-1)}]$ there holds $A_{t}(R,\lambda)\Subset\Omega_{t}$
and 
\begin{equation}
u(x,t)\le w(x,t)=-\frac{1}{\lambda}\log\cos\left[\lambda\left(\rad-R\right)\right]+\lambda\,t+f(t)\label{eq thm annulus estimate}
\end{equation}
holds for all $x\in A_{t}(R,\lambda).$ The expression $f(t)$ can
be given by (\ref{eq 1. choice of f}) or (\ref{eq 2. choice of f})
depending on your preferences. 
\end{thm}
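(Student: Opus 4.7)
The plan is to run a parabolic comparison of $u$ against the supersolution $w$ of~(\ref{eq differential inequality}) just constructed. The comparison only makes sense after securing the containment $A_t\Subset\Omega_t$ throughout $I(R)$, so I would prove both assertions in parallel, with the containment as a prerequisite for the comparison.

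\textbf{Containment step.} Each connected component of $\partial A_t$ is an $(n{-}1)$-sphere of radius $\sqrt{r_0^2-2(n-1)t}$ with $r_0\in\{R-\frac{\pi}{2\lambda},R+\frac{\pi}{2\lambda}\}$, i.e., a standard shrinking-sphere mean curvature flow. By assumption $\overline{A_0}\subset\Omega_0$, so these spheres start inside $\Omega_0$. Since the shadow flow $\partial\Omega_t$ is a weak mean curvature flow (Remark~\ref{rem MCFwS}), I would invoke the avoidance principle between a smooth MCF and the weak shadow flow to conclude that $\partial\Omega_t$ can never cross a sphere component of $\partial A_t$. As the shadow flow does not spontaneously create new components, no part of $\partial\Omega_t$ can appear inside $A_t$, and continuity from $A_0\subset\Omega_0$ would force $\overline{A_t}\subset\Omega_t$ throughout $I(R)$.

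\textbf{Comparison step.} With $a^{ij}(p):=\delta^{ij}-p^ip^j/(1+|p|^2)$, the computation just given shows $\dot w-a^{ij}(\nabla w)\,w_{ij}\ge 0$ on $A_t\setminus\{0\}$, and a direct check at $x=0$ (where $\nabla w=0$ and $\sqrt{2(n-1)t}\le R$ throughout $I(R)$ makes the relevant $\tan$-term non-positive) extends this to all of $A_t$ for $t>0$; the only genuine singularity of $w$ is the downward cone at the spacetime point $(0,0)$. To upgrade to a strict supersolution and rule out an initial contact simultaneously, I would perturb
\[
w_\varepsilon(x,t):=w(x,t)+\varepsilon(1+t),\qquad\varepsilon>0,
\]
so that $\dot w_\varepsilon-a^{ij}(\nabla w_\varepsilon)(w_\varepsilon)_{ij}\ge\varepsilon$ wherever $w$ is smooth and $u_0<w_\varepsilon(\cdot,0)$ on $A_0$, and then study the first-contact time
\[
t_*:=\inf\bigl\{t\in[0,T(R)]:\sup\nolimits_{A_t}(u-w_\varepsilon)(\cdot,t)\ge 0\bigr\}.
\]
The strict initial inequality gives $t_*>0$; the blow-up $w_\varepsilon\to+\infty$ as $x\to\partial A_{t_*}$ (since $\cos[\lambda(\sqq-R)]\to 0^+$) against the boundedness of $u$ on compact subsets of $\Omega_{t_*}$ (from containment) forces the supremum at $t_*$ to be attained at an interior $x_0\in A_{t_*}$; and $(x_0,t_*)\neq(0,0)$ because $t_*>0$, so $w_\varepsilon$ is smooth at $(x_0,t_*)$. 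There $\nabla u=\nabla w_\varepsilon$, $D^2u\le D^2w_\varepsilon$, and $\partial_tu\ge\partial_tw_\varepsilon$, so
\[
0=\partial_tu-a^{ij}(\nabla u)\,u_{ij}\ \ge\ \partial_tw_\varepsilon-a^{ij}(\nabla w_\varepsilon)(w_\varepsilon)_{ij}\ \ge\ \varepsilon>0,
\]
a contradiction. Therefore $u<w_\varepsilon$ on $A_t\times I(R)$, and sending $\varepsilon\to 0^+$ yields $u\le w$.

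\textbf{Main obstacle.} The delicate step is the containment: I need avoidance between a smooth MCF and the weak shadow flow, together with the absence of spontaneous creation of components of $\partial\Omega_t$ inside $A_t$; this must be extracted from the weak-flow description of the shadow flow recalled in Section~\ref{sec MCFwS} and the appendix. The non-smoothness of $w$ at $(0,0)$ would be a secondary concern, but it is neatly sidestepped by the $\varepsilon$-perturbation, which keeps the first-contact time strictly positive and hence away from the cone.
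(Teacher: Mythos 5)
Your proposal is correct and follows the same overall structure as the paper's proof: establish $A_t\Subset\Omega_t$ from the domain-flow/avoidance property of the shadow flow (the paper simply cites \cite{Mau1} for this, which is exactly the content you describe extracting from Section~\ref{sec MCFwS} and the appendix), and then run a comparison against the supersolution $w$. The comparison step, however, is carried out differently. The paper phrases it in barrier language and deals with the point $x=0$ by observing that all one-sided directional derivatives of $w(\cdot,0)$ at the origin are strictly negative (a downward cone), so that no smooth hypersurface can touch $\graph w$ from below there; for $t>0$ the function $w$ is in fact smooth at $x=0$ and, as you correctly note, satisfies the differential inequality there. Your $\varepsilon$-perturbation $w_\varepsilon = w+\varepsilon(1+t)$ combined with a first-contact argument accomplishes the same thing but more systematically: it simultaneously upgrades $w$ to a strict supersolution, converts the non-strict initial comparison $u_0\le w(\cdot,0)$ into a strict one, and pushes the first-contact time away from $t=0$ so that the cone singularity of $w$ at $(0,0)$ is never an issue. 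This is arguably cleaner and more robust than the paper's discussion, at the cost of a bit more writing. The remaining caveat, which you flag, is the containment $A_t\Subset\Omega_t$ through the moment the inner sphere of $\partial A_t$ disappears; the paper sidesteps this by invoking \cite{Mau1}, and your sketch (avoidance with shrinking spheres plus no spontaneous creation of boundary components) is the right idea but would need the domain-flow formalism from the appendix to be made airtight.
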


\begin{proof}
From \cite{Mau1} we know that $A_{t}(R,\lambda)\Subset\Omega_{t}$
for all $t>0$, because the boundary of $A_{t}(R,\lambda)$ moves
by mean curvature flow. In particular, $\graph u(\cdot,t)$ will not
touch $\graph w(\cdot,t)$ at infinity. As we have demonstrated above,
there will be no touching in the interior, either. Thus, (\ref{eq thm annulus estimate})
holds and the theorem is proven.
\end{proof}
Clearly $-w$ resembles a subsolution, and consequently, $\graph-w(\cdot,t)$
is a lower barrier which cannot be touched from above by any mean
curvature flow.

\subsection{Example with wild oscillations}

We can use Theorem \ref{thm annulus estimate} to construct an example
of mean curvature flow without singularities which behaves quite badly.
The associated domain $\Omega$ simply describes a shrinking ball
and the associated function $u$ converges to $+\infty$ at $\partial\Omega$.
So the graphical surface $M_{t}=\graph u(\cdot,t)$ vanishes at $+\infty$
the moment when $\Omega_{t}$ shrinks to a point. Special about the
surface is that it does not vanish monotonically to infinity. Instead,
$u(0,t)$ increasingly oscillates as $t$ approaches the final time.
The surface $M_{t}$ has unbounded curvature at any time, a behavior
that is not mirrored by the domain $\Omega_{t}$. In fact, as we go
upwards at a fixed time $t$, $M_{t}$ has to get closer and closer
to $\partial\Omega_{t}\times\R$. However, $M_{t}$ is not smoothly
asymptotic to $\partial\Omega_{t}\times\R$ but $M_{t}$ approaches
$\Omega_{t}\times\R$ with more and more sheets. Figure \ref{fig oscillation}
is an attempt to depict $M_{t}$. For simplicity we will only consider
the case $n=2$, although it is straightforward to generalize to higher
$n$.

\begin{figure}
\label{fig oscillation}
\begin{centering}
\includegraphics[height=6cm]{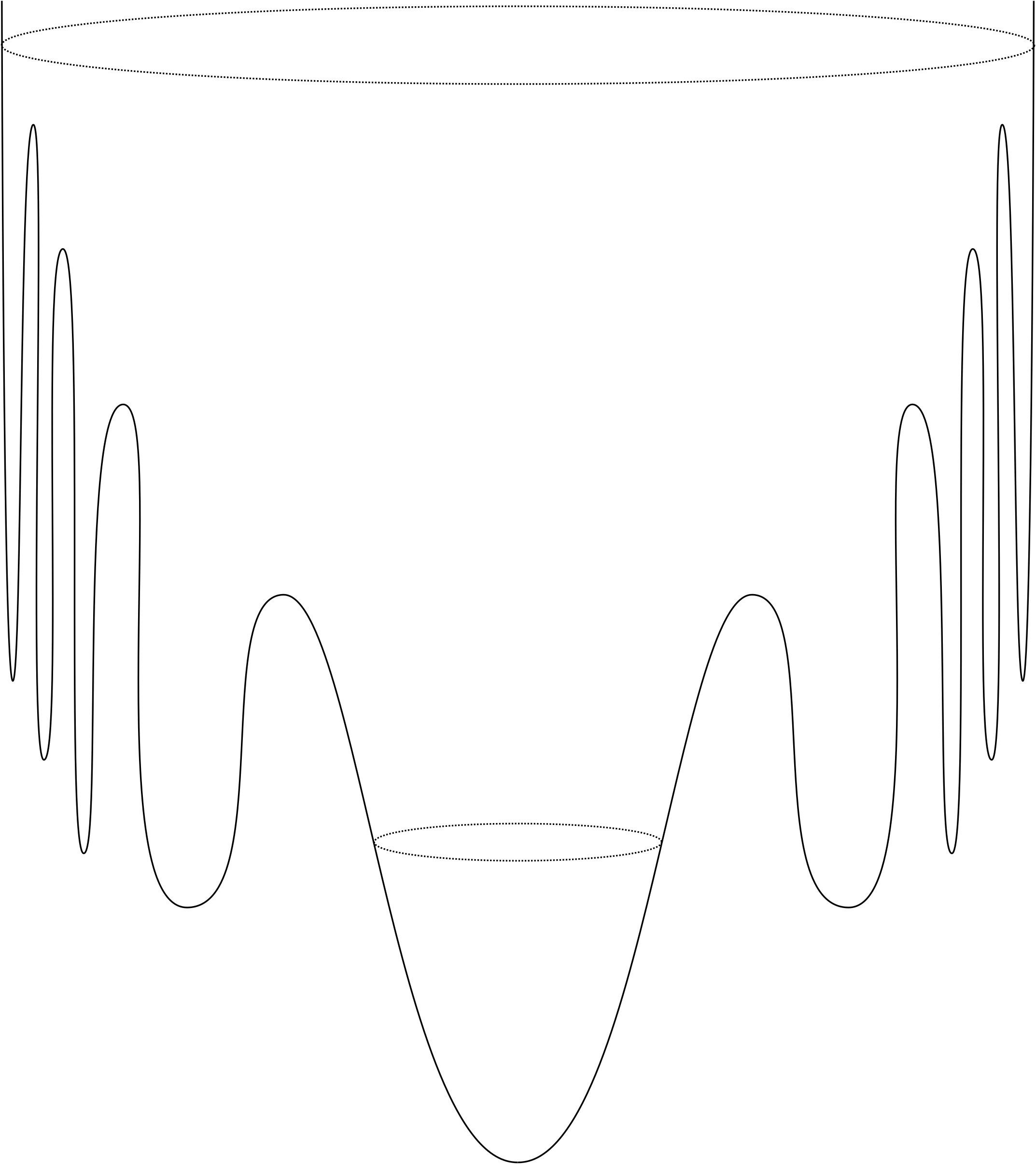}
\par\end{centering}
\caption{Wildly Oscillating Mean Curvature Flow Without Singularities.}
\end{figure}

For all $k\in\mathbb{N}$ with $k\ge2$ we define the Intervals $I_{k}\coloneqq\left(\frac{k-2}{k-1},\frac{k-1}{k}\right)$.
The length of $I_{k}$ is given by $\frac{1}{(k-1)k}$. Let $\lambda_{k}\coloneqq(k-1)k\,\pi$
and let $R_{k}$ be the centers of the $I_{k}$. Then we define 
\begin{align}
w_{+}(x) & \coloneqq\begin{cases}
-\frac{1}{\lambda_{k}}\log\cos\big[\lambda_{k}\,(|x|-R_{k})\big]+k & \text{for }|x|\in I_{k}\text{ with even }k,\\
+\infty & \text{else},
\end{cases}\\
w_{-}(x) & \coloneqq\begin{cases}
+\frac{1}{\lambda_{k}}\log\cos\big[\lambda_{k}\,(|x|-R_{k})\big]+k^{3} & \text{for }|x|\in I_{k}\text{ with odd }k,\\
-\infty & \text{else.}
\end{cases}
\end{align}
The functions $w_{\pm}$ are continuous and $w_{-}<w_{+}$ holds,
in fact, even $w_{+}-w_{-}=\infty$ is true. Figure \ref{fig comb}
sketches $w_{+}$ and $w_{-}$. The different added constants $k$
and $k^{3}$ are there to ensure that the barriers corresponding to
$w_{+}$ and $w_{-}$ stay interlocked for all times such that a mean
curvature flow between those is bound to oscillate infinitely towards
$\partial\Omega_{t}\times\R$. But we are going to have a closer look
at this now. 
\begin{figure}
\begin{centering}
\includegraphics[height=5.5cm]{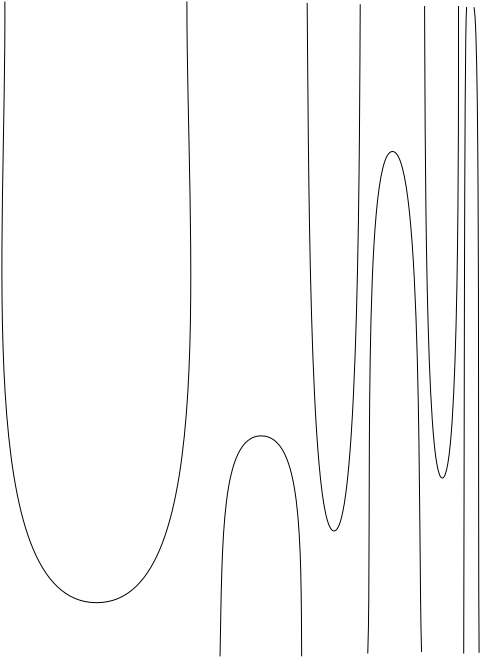}
\par\end{centering}
\caption{Sketch of a cross-section for $w_{\pm}$.}
\label{fig comb}
\end{figure}

Let $u_{0}$ be any smooth function on $B_{1}(0)\subset\R^{2}$ with
$w_{-}<u_{0}<w_{+}$ and with $u_{0}(x)\to\infty$ for $x\to\partial B_{1}(0)$.
Let $u\colon\Omega\to\R$ be a mean curvature flow without singularities
that starts from $u_{0}$. The solution is defined up to the time
$T=\frac{1}{2}$, at which the balls $\Omega_{t}$ shrink to a point.

Let $T_{k}\coloneqq\frac{R_{k}^{2}}{2}$. Theorem \ref{thm annulus estimate}
yields for even $k$ 
\begin{equation}
\begin{split}u(0,T_{k}) & \le k-\frac{1}{\lambda_{k}}\log\cos\left[\lambda_{k}\left(\sqrt{2T_{k}}-R_{k}\right)\right]+\left(\lambda_{k}+\sqrt{\frac{2\lambda_{k}}{\pi\,R_{k}}}\right)T_{k}\\
 & =k+\left((k-1)\,k\,\pi+\sqrt{\frac{2(k-1)\,k}{R_{k}}}\right)T_{k}\\
 & \simeq\frac{\pi}{2}\,k^{2}\qquad(k\to\infty)\;.
\end{split}
\end{equation}
Analogously, we obtain for odd $k$ 
\begin{equation}
\begin{split}u(0,T_{k}) & \ge k^{3}-\left((k-1)\,k\,\pi+\sqrt{\frac{2(k-1)\,k}{R_{k}}}\right)T_{k}\\
 & \simeq k^{3}\qquad(k\to\infty)\;.
\end{split}
\end{equation}
This says that $u(0,T_{2l+1})$ is larger than $\frac{1}{2}(2l+1)^{3}$,
while $u(0,T_{2l})$ is smaller than $2(2l)^{2}$ if the number $l\in\mathbb{N}$
is large enough. This shows that $u(0,t)$ has no clear rate with
which it tends to infinity. Since $T_{k}\to\frac{1}{2}$ it also shows
that $u(0,t)$ oscillates more and more on smaller and smaller time
intervals and that there must be a sequence $(t_{k})_{k\in\mathbb{N}}$
of times with $t_{k}\to T=\frac{1}{2}$ and $\dot{u}(0,t_{k})\to-\infty$,
while it is clear that there are also sequences $(\tilde{t}_{k})_{k\in\mathbb{N}}$
with $\tilde{t}_{k}\to T$ and $\dot{u}(0,\tilde{t}_{k})\to+\infty$.
In summary, one can say that much of the behavior of $u_{0}$ for
$|x|\to R=1$ is transmitted by virtue of Theorem \ref{thm annulus estimate}
to $u(0,t)$ for $t\to T$. In the next paragraph we will see more
of this.
\begin{rem*}
An example of a mean curvature flow without singularities akin to
the one we have discussed is defined on a half-space. It also has
unbounded curvature for all $t\ge0$ and it sheets towards a plane.
It can be constructed in a similar fashion but instead of the annulus
barrier one uses grim reapers cross $\R$ in that case.
\end{rem*}

\subsection{Relation between spatial and temporal asymptotics}

Theorem \ref{thm annulus estimate} shows that the height of a solution
can be estimated by the height of the solution on an annulus at a
prior time. This can be used to derive a relationship between temporal
and spatial asymptotics. 
\begin{thm}
\label{thm relation between asymptotics} Let $n\ge2$ and let $u_{0}\colon\R^{n}\supset B_{\rho}(0)\to\R$
be a smooth function and suppose that there are $\alpha>1$ and $C>0$
such that 
\begin{equation}
u_{0}(x)\simeq C\,(\rho-|x|)^{-\alpha}\qquad(|x|\to\rho)\;.
\end{equation}
Let $u\colon\Omega\to\R$ be a mean curvature flow without singularities
starting from $u_{0}$. Then 
\begin{equation}
\begin{split}u(0,t) & \simeq C\left(\rho-\sqrt{2(n-1)t}\right)^{-\alpha}\qquad(t\to T)\\
 & \simeq C\left(\frac{n-1}{\rho}\,(T-t)\right)^{-\alpha}\qquad(t\to T)
\end{split}
\end{equation}
holds with $T\coloneqq\frac{\rho^{2}}{2(n-1)}$.
\end{thm}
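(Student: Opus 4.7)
The plan is to derive matching upper and lower bounds on $u(0,t)$ by applying Theorem \ref{thm annulus estimate} together with its lower-barrier counterpart (the final remark of Section \ref{sec barrier} that $-w$ serves as a subsolution barrier), evaluated with parameters $R,\lambda$ tailored to the target time $t$. First note that the two asymptotics are equivalent: writing $\varepsilon := \rho - \sqrt{2(n-1)t}$, the factorization $\rho^{2} - 2(n-1)t = (\rho - \sqrt{2(n-1)t})(\rho + \sqrt{2(n-1)t})$ gives $\varepsilon = 2(n-1)(T-t)/(\rho + \sqrt{2(n-1)t}) \simeq \frac{n-1}{\rho}(T-t)$ as $t\to T$, so it is enough to establish $u(0,t)\simeq C\varepsilon^{-\alpha}$. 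Fix $t$ near $T$ and choose $R := \sqrt{2(n-1)t}$, so that at $x=0$ one has $\sqrt{|x|^{2}+2(n-1)t}=R$ and $-\frac{1}{\lambda}\log\cos[\lambda(R-R)]=0$; hence $w(0,t)=\lambda t+f(t)$. Choose $\lambda := \varepsilon^{-\beta}$ for some fixed $\beta\in(1,\alpha)$, a nonempty interval by the hypothesis $\alpha>1$. For $\varepsilon$ small one verifies $R+\pi/(2\lambda)<\rho$, so $A_{0}(R,\lambda)\Subset B_{\rho}=\Omega_{0}$.

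For the upper bound, set $K := \sup_{A_{0}(R,\lambda)} u_{0}$. Since equation (\ref{eq GMCFwS}) is invariant under adding a constant to $u$, the pair $(u-K,\Omega)$ is a mean curvature flow without singularities satisfying $(u-K)(\cdot,0)\le 0 \le w(\cdot,0)$ on $A_{0}(R,\lambda)$. Theorem \ref{thm annulus estimate} applied to $u-K$ then gives $u(0,t)\le K+\lambda t+f(t)$. A symmetric argument with the subsolution barrier $-w+K'$, where $K' := \inf_{A_{0}(R,\lambda)} u_{0}$, yields $u(0,t)\ge K'-\lambda t-f(t)$.

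It remains to compute the leading order of each ingredient. On $A_{0}(R,\lambda)$ the distance to the boundary satisfies $\rho-|x|\in(\varepsilon-\pi/(2\lambda),\,\varepsilon+\pi/(2\lambda))$, and because $\pi/(2\lambda)=(\pi/2)\varepsilon^{\beta}=o(\varepsilon)$ for $\beta>1$, this interval collapses to $\varepsilon(1+o(1))$ uniformly as $\varepsilon\to 0$. Under the hypothesis $u_{0}(x)=C(\rho-|x|)^{-\alpha}(1+o(1))$ this forces $K=K'=C\varepsilon^{-\alpha}(1+o(1))$. For the drift, $\lambda t\le \lambda T=O(\varepsilon^{-\beta})$, and using the second form (\ref{eq 2. choice of f}) of $f$ gives $f(t)=O(\log\lambda)=O(\log(1/\varepsilon))$; both are $o(\varepsilon^{-\alpha})$ since $\beta<\alpha$. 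Assembling, $u(0,t)=C\varepsilon^{-\alpha}(1+o(1))$, which is the first asymptotic.

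The delicate point is the choice of the exponent $\beta$. One needs $\lambda$ large so that the annulus is thin enough for $\sup_{A_{0}}u_{0}$ and $\inf_{A_{0}}u_{0}$ to agree with $C\varepsilon^{-\alpha}$ at leading order, but small enough so that the barrier's upward drift $\lambda t+f(t)$ stays $o(\varepsilon^{-\alpha})$. The hypothesis $\alpha>1$ opens exactly the interval $(1,\alpha)$ in which to place $\beta$; both inequalities $\beta>1$ and $\beta<\alpha$ are essential, and the borderline case $\alpha=1$ would require a finer barrier construction.
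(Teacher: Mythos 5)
Your proposal is correct and follows essentially the same route as the paper: translate the solution by $\sup$ (resp.\ $\inf$) of $u_{0}$ over a thin annulus centered at radius $\sqrt{2(n-1)t}$, apply Theorem \ref{thm annulus estimate} and its lower-barrier counterpart at $x=0$ and time $t=T(R)$, and balance annulus thinness against the drift $\lambda t+f(t)$. The only cosmetic differences are that the paper fixes $\lambda_{t}=\frac{\pi}{2}(\rho-r_{t})^{-(1+\alpha)/2}$, i.e.\ your exponent $\beta=\frac{1+\alpha}{2}$, uses the first choice of $f$, and converts between the two asymptotic forms by Taylor expansion rather than factorization.
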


\begin{proof}
The idea is to use the barriers over annuli as depicted in Figure
\ref{fig relation}.

\begin{figure}

\begin{centering}
\includegraphics[width=0.5\textwidth]{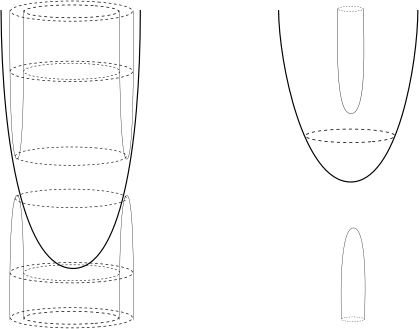}
\par\end{centering}
\caption{Use of the barriers for relation of spatial and temporal asymptotics.}
\label{fig relation}

\end{figure}

Let $0<t<T$, $r_{t}\coloneqq\sqrt{2(n-1)t}$, and $\lambda_{t}\coloneqq\frac{\pi}{2}\,(\rho-r_{t})^{-\frac{1+\alpha}{2}}$.
We set 
\begin{equation}
A(t)\coloneqq A_{0}(r_{t},\lambda_{t})=\left\{ x\colon r_{t}-\frac{\pi}{2\lambda_{t}}<|x|<r_{t}+\frac{\pi}{2\lambda_{t}}\right\} .
\end{equation}
We will assume that $t$ is sufficiently close to $T$ such that $\rho-r_{t}>(\rho-r_{t})^{\frac{1+\alpha}{2}}$
and hence $A(t)\Subset B_{\rho}(0)$ hold.

Theorem \ref{thm annulus estimate} yields 
\begin{equation}
u(0,t)\le\sup_{x\in A(t)}u_{0}(x)+\left(\lambda_{t}+\sqrt{\frac{2\lambda_{t}}{\pi\,r_{t}}}\right)t\;.\label{eq thm rba leq}
\end{equation}
We determine the asymptotic behavior: Let $x_{t}\in\overline{A(t)}$
with $\sup_{x\in A(t)}u_{0}(x)=u_{0}(x_{t})$. We observe that 
\begin{equation}
\frac{\rho-r_{t}\pm\frac{\pi}{2\lambda_{t}}}{\rho-r_{t}}=\frac{\rho-r_{t}\pm(\rho-r_{t})^{(1+\alpha)/2}}{\rho-r_{t}}=1\pm(\rho-r_{t})^{\frac{\alpha-1}{2}}\simeq1\qquad(t\to T)\;.
\end{equation}
From this we infer that $\rho-|x_{t}|\simeq\rho-r_{t}$ holds and
we deduce 
\begin{equation}
\sup_{x\in A(t)}u_{0}(x)=u_{0}(x_{t})\simeq C\,(\rho-|x_{t}|)^{-\alpha}\simeq C\,(\rho-r_{t})^{-\alpha}\qquad(t\to T)\;.\label{eq thm rba sup}
\end{equation}

Similar to (\ref{eq thm rba leq}) and (\ref{eq thm rba sup}), we
find 
\begin{equation}
u(0,t)\ge\inf_{x\in A(t)}u_{0}(x)-\left(\lambda_{t}+\sqrt{\frac{2\lambda_{t}}{\pi\,r_{t}}}\right)t\label{eq thm rba geq}
\end{equation}
and 
\begin{equation}
\inf_{x\in A(t)}u_{0}(x)\simeq C\,(\rho-r_{t})^{-\alpha}\qquad(t\to T)\;.\label{eq thm rba inf}
\end{equation}
Lastly, we note that 
\begin{equation}
\left(\lambda_{t}+\sqrt{\frac{2\lambda_{t}}{\pi\,r_{t}}}\right)t\simeq\frac{\pi}{2}\,(\rho-r_{t})^{-(1+\alpha)/2}\,T\qquad(t\to T)\;.\label{eq thm rba rest}
\end{equation}
Taking together (\ref{eq thm rba leq}), (\ref{eq thm rba sup}),
(\ref{eq thm rba geq}), (\ref{eq thm rba inf}), and (\ref{eq thm rba rest}),
we find (note $\alpha>\frac{1+\alpha}{2}$) 
\begin{equation}
u(0,t)\simeq C\,(\rho-r_{t})^{-\alpha}=C\left(\rho-\sqrt{2(n-1)t}\right)^{-\alpha}\qquad(t\to T)\;.
\end{equation}
With a Taylor expansion at $t=T=\frac{\rho^{2}}{2(n-1)}$, we can
also write this as 
\begin{equation}
u(0,t)\simeq C\left(\frac{n-1}{\rho}(T-t)\right)^{-\alpha}\qquad(t\to T)\;.\qedhere
\end{equation}
\end{proof}
\begin{rem*}
The assumption on the rate of $u_{0}$ is not expected to be optimal:
In \cite{IW} and \cite{IWZ} rotational symmetric solutions of the
mean curvature flow without singularities that fit into our setting
have been constructed. Their solutions have the asymptotics Theorem
\ref{thm relation between asymptotics} is concerned with, while Isenberg
and Wu only state the asymptotics for blow-ups. Their results concentrate
more on the blow-up rate of the curvature and they prove that any
blow-up rate $(T-t)^{-\alpha}$ with $\alpha\ge1$ for the curvature
is possible. The case $\alpha=1$ is dealt with in \cite{IWZ} and
is beyond the scope of our Theorem \ref{thm relation between asymptotics}.
\end{rem*}

\appendix

\section{Normal graphs}

\label{sec normal-graphs}
\begin{defn}[Tubular Neighborhood]
\label{def tubular neighborhood} Let $N$ be a $C^{2}$-hypersurface
of $\R^{n+1}$ and let $\nu$ be a continuous normal to $N$. If for
$\delta>0$ ($\delta=\infty$ is possible) the map $\Phi\colon(p,d)\mapsto p-d\,\nu$,
defined on $N\times(-\delta,\delta)$, is a diffeomorphism onto its
image, then this image is called a \emph{tubular neighborhood of $N$
of thickness $\delta$} and we denote it by $N_{\delta}$. The diffeomorphism
$\Phi$ is called \emph{tubular diffeomorphism}.
\end{defn}

\begin{defn}[Normal Graph]
\label{def normal graph} Let $N$ be a $C^{2}$-hypersurface of
$\R^{n+1}$ with tubular diffeomorphism $\Phi\colon N\times(-\delta,\delta)\to N_{\delta}$.
A hypersurface $M$ of $\R^{n+1}$ is called a \emph{normal graph
over $N$} (or \emph{has a normal graph representation over $N$})
if $M=\Phi(\graph u)$ for some function $u\colon N\supset N'\to(-\delta,\delta)$.
This function is called the \emph{representation function}.
\end{defn}

\subsection{Geometry of normal graphs}
\begin{prop}
\label{prop geometry of normal graphs} Let $M$ be a normal graph
over a $C^{3}$-hypersurface $N^{n}$ of $\R^{n+1}$ with representation
function $u$. We denote the metric and second fundamental form of
$N$ by $g_{ij}$ and $h_{ij}$ and the Weingarten map with $A$.
On $M$ we choose the normal $\nu^{M}$ which satisfies $\left\langle \nu^{M},\nu\right\rangle \ge0$,
where $\nu$ is the normal of $N$. Then, with $B\coloneqq\sum_{a=0}^{\infty}(u\,A)^{a}$,
\begin{align}
g_{ij}^{M} & =g_{ij}+u_{i}\,u_{j}-2\,u\,h_{ij}+u^{2}\,h_{ik}\,h_{j}^{k}\;,\label{eq gong g_ij}\\
h_{ij}^{M} & =\frac{1}{\sqrt{1+\left|\mathrm{D}u\cdot B\right|^{2}}}\big[h_{ij}+u_{ij}-u\,h_{j}^{k}\,h_{kj}+\big(u_{i}\,h_{j}^{k}+u_{j}\,h_{i}^{k}+u\,\nabla_{j}h_{i}^{k}\big)\,u_{l}\,B_{k}^{l}\big]\;.\label{eq gong h_ij}
\end{align}
\end{prop}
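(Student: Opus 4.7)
The plan is to realize $M$ locally as the image of the map $F\colon N\supset N'\to\R^{n+1}$, $F(p)\coloneqq p-u(p)\,\nu(p)$, and then read off both tensors by direct Gauss--Weingarten calculus on $N$. Throughout I will work in the sign convention $\partial_i\nu=h_i^k e_k$ (so that $A_i^k=h_i^k$) together with the companion Gauss equation $\partial_i e_j=\Gamma_{ij}^k e_k-h_{ij}\nu$; specialisation to $u\equiv 0$ (or to the round sphere $|x|=R$ with $\nu=x/R$) confirms this is the convention consistent with the stated formulas. Every subsequent computation is linear algebra with Christoffel symbols.

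For (\ref{eq gong g_ij}) a single differentiation gives
\begin{equation*}
\partial_i F=e_i-u_i\,\nu-u\,h_i^k e_k=P_i^k e_k-u_i\,\nu,\qquad P_i^k\coloneqq\delta_i^k-u\,h_i^k,
\end{equation*}
so $g_{ij}^M=\langle\partial_i F,\partial_j F\rangle=P_i^k P_j^l g_{kl}+u_i u_j$, and expanding $P_i^k P_j^l g_{kl}$ produces the stated formula.

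The next task is to pin down $\nu^M$. Writing $\nu^M=\beta(\nu+\alpha^k e_k)$ and imposing $\langle\nu^M,\partial_j F\rangle=0$ yields the linear system $g_{kl}P_j^l\alpha^k=u_j$; since $h_{ij}$ is symmetric, $A$ and hence $P$ are self-adjoint with respect to $g$, and the Neumann series $B=(I-uA)^{-1}=\sum_{a\ge 0}(uA)^a$ inverts it to give $\alpha_l=B_l^i u_i$, i.e.\ $\alpha$ is the sharp of the covector $\mathrm{D}u\cdot B$. The unit-length condition together with the sign choice $\langle\nu^M,\nu\rangle\ge 0$ then forces $\beta=(1+|\mathrm{D}u\cdot B|^2)^{-1/2}$, which is exactly the prefactor appearing in (\ref{eq gong h_ij}).

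Finally, $h_{ij}^M=-\langle\partial_i\partial_j F,\nu^M\rangle$. To tame the index bookkeeping I would fix a point $p_0\in N$ and pass to normal coordinates there (so $\Gamma_{ij}^k(p_0)=0$ and partial derivatives of tensors on $N$ coincide with covariant derivatives at $p_0$). Differentiating $\partial_j F=P_j^k e_k-u_j\,\nu$ once more and inserting Gauss and Weingarten gives at $p_0$
\begin{equation*}
\partial_i\partial_j F=\bigl(-h_{ij}+u\,h_j^k h_{ik}-u_{ij}\bigr)\nu-\bigl(u_i h_j^k+u_j h_i^k+u\,\nabla_i h_j^k\bigr)e_k.
\end{equation*}
Splitting $\nu^M$ into its $\nu$- and $e_k$-components, inserting $\alpha_k=u_l B_k^l$, and applying the Codazzi equation on $N$ to rewrite $\nabla_i h_j^k$ as $\nabla_j h_i^k$, one reads off exactly (\ref{eq gong h_ij}) (with the harmless typo $h_j^k h_{kj}$ corrected to $h_j^k h_{ki}$). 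Since the identity is tensorial, validity at the arbitrary $p_0$ suffices. The only real obstacle is the index bookkeeping — keeping straight which transpose of $P$ is inverted when solving for $\alpha$, and recognising the tangential part of $\partial_i\partial_j F$ as precisely the piece that pairs with $u_l B_k^l$ — and the normal-coordinate reduction is designed to make this step routine.
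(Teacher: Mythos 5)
Your proof is correct and reaches the same formulas, but you take a cleaner route for the step that pins down $\nu^M$. The paper realizes $M$ as the zero level set of $d-\tilde u$ (with $d$ the signed distance to $N$ and $\tilde u = u\circ\pi$), identifies $-\nu^M$ with $(\nabla d-\nabla\tilde u)/\sqrt{1+|\nabla\tilde u|^2}$ using $\nabla d\perp\nabla\tilde u$ and $|\nabla d|=1$, and then must derive the chain-rule identity $\mathrm{D}\tilde u|_X\cdot\mathrm{D}N=\mathrm{D}u\cdot B$ by differentiating $\mathrm{id}=\Phi(\pi,d)$ and inverting via Neumann series --- this is where $B=(I-uA)^{-1}$ enters. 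You instead posit $\nu^M=\beta(\nu+\alpha^k e_k)$, impose orthogonality to $\partial_j F$, and read off $\alpha_k=u_l B_k^l$ by inverting $P=I-uA$ directly; this is linear algebra in the moving frame $\{\nu,e_k\}$ and avoids introducing $\pi$, $d$, $\tilde u$, and $\mathrm{D}\pi$ altogether. The parametrization $F(p)=p-u(p)\nu(p)$, the Gauss--Weingarten expansion of $\partial_i F$ and $\partial_i\partial_j F$, the use of Codazzi to exchange $\nabla_i h_j^k$ and $\nabla_j h_i^k$, and the final assembly of $g^M_{ij}$ and $h^M_{ij}$ coincide with the paper's. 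Your observation that $h_j^k h_{kj}$ in the stated formula (\ref{eq gong h_ij}) should read $h_i^k h_{kj}$ (equivalently $h_j^k h_{ki}$) is correct: it is a typo in the proposition statement, and the paper's own computation of $X_{ij}$ produces the term $u\,h_i^k h_{kj}$.
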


\begin{proof}
We will denote the embedding of $N$ into $\R^{n+1}$ with $N$, too.
A parametrization of $M$ is given by $X(x)=N(x)-u(x)\,\nu(x)$ with
$x\in N'\subset N$. We compute derivatives up to the second order
while noting $N_{ij}=-h_{ij}\,\nu$ and $\nu_{i}=h_{i}^{k}\,N_{k}$:
\begin{align}
X_{i} & =N_{i}-u_{i}\,\nu-u\,\nu_{i}=N_{i}-u_{i}\,\nu-u\,h_{i}^{k}\,N_{k}\;,\\
X_{ij} & =N_{ij}-u_{ij}\,\nu-u_{i}\,\nu_{j}-u_{j}\,h_{i}^{k}\,N_{k}-u\,(\nabla_{j}h_{i}^{k})\,N_{k}-u\,h_{i}^{k}\,N_{kj}\nonumber \\
 & =(-h_{ij}-u_{ij}+u\,h_{i}^{k}\,h_{kj})\,\nu-(u_{i}\,h_{j}^{k}+u_{j}\,h_{i}^{k}+u\,\nabla_{j}h_{i}^{k})\,N_{k}\;.
\end{align}

The asserted identity (\ref{eq gong g_ij}) for $g_{ij}^{M}=\left\langle X_{i},X_{j}\right\rangle $
follows.

For the second identity we compute $h_{ij}^{M}=-\left\langle X_{ij},\nu^{M}\right\rangle .$
\global\long\def\tu{\tilde{u}}%
 To this end, we observe that the submanifold $M$ is given by $d-\tu=0$,
where $d$ is the distance function to $N$ and $\tu=u\circ\pi$ is
the extension of $u$ to the tubular neighborhood of $N$ which is
constant in normal direction; $\pi$ is the projection to the closest
point in $N$. (We actually have $\Phi^{-1}=(\pi,d)$, where $\Phi$
is the tubular diffeomorphism.) So for $v\in\mathrm{T}_{p}M$ we have
$(\mathrm{D}d-\mathrm{D}\tu)|_{p}\cdot v=0$. From here we see that
$(\nabla d-\nabla\tu)\circ X$ is proportional to the normal $\nu^{M}$
of $M$. It actually points in the direction of $-\nu^{M}$. Moreover,
$|\nabla d-\nabla\tu|^{2}=1+|\nabla\tu|^{2}$ since $\nabla d$ and
$\nabla\tu$ are orthogonal and $|\nabla d|=1$. Using $\nabla d=-\nu\circ\pi$
and $\mathrm{D}\tu|_{X}\cdot\nu=0$, we obtain 
\begin{equation}
\begin{split}h_{ij}^{M} & =-\left\langle X_{ij},\nu^{M}\right\rangle =\left(\frac{\mathrm{D}d-\mathrm{D}\tu}{\sqrt{1+|\mathrm{D}\tu|^{2}}}\circ X\right)\cdot X_{ij}\\
 & =\frac{1}{\sqrt{1+\left|\nabla\tu|_{X}\right|^{2}}}\left[h_{ij}+u_{ij}-u\,h_{i}^{k}\,h_{kj}+\left(u_{i}\,h_{j}^{k}+u_{j}\,h_{i}^{k}+u\,\nabla_{j}h_{i}^{k}\right)\,\mathrm{D}\tu|_{X}\cdot N_{k}\right]\;.
\end{split}
\label{eq gong h_ij 1}
\end{equation}
Now we turn our attention to $\mathrm{D}\tu|_{X}=(\mathrm{D}u|_{\pi}\cdot\mathrm{D}\pi)|_{X}=\mathrm{D}u\cdot\mathrm{D}\pi|_{X}$
and focus on $\mathrm{D}\pi|_{X}$. Because $\textrm{id}=\Phi(\pi,d)=N(\pi)-d\,\nu(\pi)$
holds, we compute, using $\mathrm{D}\nu=\mathrm{D}N\cdot A$ and $\nabla d|_{X}=-\nu$,
\begin{equation}
\begin{split}\mathrm{id}_{\mathrm{T}N_{\delta}}|_{X} & =\mathrm{D}N\cdot\mathrm{D}\pi|_{X}-d|_{X}\,\mathrm{D}\nu\cdot\mathrm{D}\pi|_{X}-\nu\otimes\mathrm{D}d|_{X}\\
 & =\mathrm{D}N\cdot\left(\mathrm{id}_{\mathrm{T}N}-d|_{X}\,A\right)\cdot\mathrm{D}\pi|_{X}+\nu\otimes\nu^{\flat}\;.
\end{split}
\end{equation}
We observe that $d|_{X}=u$ holds. Because $\nu^{\flat}\cdot\nabla N=\left\langle \nu,\nabla N\right\rangle =0$,
a multiplication of $\nabla N$ from the right and a subsequent cancellation
of $\nabla N$ on the left yields 
\begin{equation}
\mathrm{id}_{\mathrm{T}N}=\left(\mathrm{id}_{\mathrm{T}N}-u\,A\right)\cdot\mathrm{D}\pi|_{X}\cdot\mathrm{D}N\,.
\end{equation}
We only mention here that $\|u\,A\|<1$ holds because the thickness
of the maximal tubular neighborhood is bounded by the curvature in
that way, though there might be global effects further cutting down
the maximal thickness. By the Neumann series, $(\mathrm{id}_{\mathrm{T}N}-u\,A)^{-1}=\sum_{a=0}^{\infty}(u\,A)^{a}\eqqcolon B$.
Hence, we obtain the identity 
\begin{equation}
\mathrm{D}\pi|_{X}\cdot\mathrm{D}N=B\;.
\end{equation}
From this equation we deduce 
\begin{equation}
\mathrm{D}\tu|_{X}\cdot\mathrm{D}N=\mathrm{D}u\cdot B\;.\label{eq gong tu}
\end{equation}
Because the metric on $N$ is the metric on $\R^{n+1}$ pulled back
via the embedding $N$, we have 
\begin{equation}
|\mathrm{D}\tu_{|X}|_{\R^{n+1}}=|\mathrm{D}\tu_{|X}\cdot\mathrm{D}N|_{g}=|\mathrm{D}u\cdot B|_{g}\;.\label{eq gong norm}
\end{equation}
Substituting (\ref{eq gong tu}) and (\ref{eq gong norm}) into (\ref{eq gong h_ij 1})
yields (\ref{eq gong h_ij}).
\end{proof}

\subsection{Normal graphs and local graph representations}
\begin{defn}
Let $M$ be a Riemannian manifold. We say a point $x\in M$ \emph{has
buffer $r>0$ in $M$} if any curve $\gamma\colon[0,a)\to M$ of length
at most $r$ and with $\gamma(0)=x$ is extendable to a curve $\gamma\colon[0,r]\to M$.

A subset \emph{$M'\subset M$ has buffer $r$ in $M$} if every point
of $M'$ has buffer $r$ in $M$.
\end{defn}

\begin{defn}
\label{def admit graph of radius} Let $M$ be a hypersurface. We
say \emph{$M$ admits local graph representations of radius $r>0$}
if $B_{r}(x)\cap M$ is graphical over the tangential hyperplane at
$x$ for any point $x\in M$.
\end{defn}

The following lemma says that normal graphs over a given hypersurface
admit local graph representations of a controlled radius.
\begin{prop}
\label{prop ntg} Let $N$ be a complete hypersurface and let $\delta$
be the thickness of a tubular neighborhood of $N$. Let $M$ be a
normal graph over $N$ with representation function $u\colon N\to(-\frac{\delta}{2},\frac{\delta}{2})$.
We assume that $|\mathrm{D}u|\le L<\frac{1}{6}$ holds. Let $N'\subset N$
have buffer $\rho>0$ and suppose that the distance functions $\dist_{\R^{n}}$
and $\dist_{N}$ are equivalent on $N$: $\dist_{N}\le C_{0}\,\dist_{\R^{n}}$.

Then there is $r>0$ such that $M$ admits local graph representations
of radius $r$. The radius $r$ depends only on $L$, $\rho$, $C_{0}$,
and $\sup|A_{N}|$. Moreover, the gradients in the local graph representations
are bounded by $8L$.
\end{prop}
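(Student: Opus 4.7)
My plan is to construct the local graph representations of $M$ in three conceptual stages: first establish that the base hypersurface $N$ itself admits local graph representations over its own tangent planes with arbitrarily small gradient; second, transfer this to $M$ to obtain graph representations over the planes $T_x N$ where $x = \pi(p)$; third, rotate the base plane from $T_x N$ to $T_p M$ and verify that the final gradient stays below $8L$. The ability to choose $\eta$ below in terms of $L$ (and then shrink radii accordingly) is what converts the three successive comparisons into the clean constant $8L$.

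For the first stage, fix a small $\eta > 0$ to be chosen later. The bound $\sup |A_N|$ controls the intrinsic turning of $\nu^N$, so the geodesic ball in $N$ of radius $\rho_1 := \min\{\rho,\eta/\sup|A_N|\}$ about any $x \in N'$ is a graph over $T_x N$ of the form $y \mapsto y + F(y)\nu(x)$ with $F(0)=0$, $\nabla F(0)=0$, and $|\nabla F| \le \eta$. The buffer assumption ensures this geodesic ball is well-defined, and the distance comparison $\dist_N \le C_0 \dist_{\R^{n+1}}$ allows us to replace it by the Euclidean ball $N \cap B_{r_1}(x)$ with $r_1 := \rho_1/C_0$, which inherits the same graph representation.

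For the second stage, parametrize $M$ near $p = x - u(x)\nu(x)$ by $y \mapsto z(y) - u(z(y))\,\nu(z(y))$, where $z(y) = y + F(y)\nu(x)\in N$ comes from stage one. Projecting this parametrization to $T_x N$ gives a map whose differential differs from the identity by terms of order $|u|\cdot|A_N| + L$; since the existence of the $\delta$-tubular neighborhood forces $|u|\sup|A_N| < 1/2$, and $L < 1/6$, for $\eta$ small enough the projection is a diffeomorphism onto its image on a slightly smaller ball, yielding a graph representation of $M$ over $T_x N$ on $B_{r_2}(p)\cap M$ for some $r_2>0$ depending only on $L,\rho,C_0,\sup|A_N|$. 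Using Proposition \ref{prop geometry of normal graphs} together with the Neumann estimate $\|B\| = \|(I - uA)^{-1}\| \le 2$, the normal $\nu^M(q)$ at any $q$ in this piece of $M$ makes an angle with $\nu(x)$ of at most $\arctan\eta + \arctan(2L)$, the first summand from the tilt of $\nu^N$ along $N$ and the second from the tangential contribution $\mathrm{D}u\cdot B$ in the formula for $\nu^M$. Hence $M$ is a graph over $T_x N$ with gradient bounded by $L' := (\eta + 2L)/(1 - 2L\eta)$.

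For the third stage, the tilt between $T_p M$ and $T_x N$ equals $\angle(\nu^M(p),\nu(x)) \le \arctan(2L)$ by the same computation specialized to the base point. The tangent addition formula then yields a gradient bound over $T_p M$ of $(L'+2L)/(1-2LL')$; choosing $\eta$ sufficiently small in terms of $L$ (so that $L' \le 6L/(1+16L^2)$, which is admissible by further shrinking $r_1$) makes this at most $8L$, with the hypothesis $L<1/6$ keeping the denominators safely away from zero. A final modest reduction of the radius to accommodate the rotation produces the desired $r$. The main obstacle is the bookkeeping of three nested radius reductions — one for the graphical representation of $N$, one for the invertibility of the projection defining the graph of $M$ over $T_x N$, and one for the rotation to $T_p M$ — together with the verification, using the Lipschitz character of $\pi$ on the tubular neighborhood, that every $q \in M\cap B_r(p)$ has $\pi(q)$ inside the domain of our parameterization, so that the graph representation really covers all of $M\cap B_r(p)$.
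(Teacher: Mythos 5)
Your proposal is correct and follows essentially the same route as the paper: both hinge on the formula $\langle\nabla d,-\nu_M\rangle = (1+|\mathrm{D}u\cdot B|^2)^{-1/2}$ together with the Neumann bound $\|B\|\le 2$, use the curvature of $N$ to control how the reference normal turns over a small ball, and then combine tilts via the tangent-addition computation to reach $8L$. The only cosmetic differences are that you estimate the turning of $\nu_N$ through stage-one graphicality of $N$ rather than through the Lipschitz constant $\|\nabla^2 d\|$, and you package the injectivity of the projection as an inverse-function-theorem statement for the explicit parametrization $y\mapsto z(y)-u(z(y))\nu(z(y))$ rather than the paper's direct contradiction argument; both are sound and rest on the same quantities $L$, $\rho$, $C_0$, $\sup|A_N|$.
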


\begin{proof}
From the proof of Proposition \ref{prop geometry of normal graphs}
we know that 
\begin{equation}
\left\langle \nabla d,-\nu_{M}\right\rangle =\frac{1}{\sqrt{1+|\mathrm{D}u\cdot B|^{2}}}\label{eq ntg alt davor}
\end{equation}
holds with $B=(I-uA)^{-1}=\sum_{k=0}^{\infty}(uA)^{k}$. Because of
$|u|<\frac{\delta}{2}$ the eigenvalues of $uA$ are at most $\frac{1}{2}$
in modulus. So the eigenvalues of $B$ are bounded by $2$. Thus,
from the gradient estimate $|\mathrm{D}u|\le L$ we obtain 
\begin{equation}
\left\langle \nabla d,-\nu_{M}\right\rangle \ge\frac{1}{\sqrt{1+(2|\mathrm{D}u|)^{2}}}\ge\frac{1}{\sqrt{1+(2L)^{2}}}\;,\label{eq ntg alt}
\end{equation}
where $d$ is the distance function to $N$.

Let $x_{0},x\in M$. Then 
\begin{equation}
\left\langle \nabla d(x_{0}),-\nu_{M}(x)\right\rangle =\left\langle \nabla d(x),-\nu_{M}(x)\right\rangle +\left\langle \nabla d(x_{0})-\nabla d(x),-\nu_{M}(x)\right\rangle \;.\label{eq ntg 0-trick}
\end{equation}
For a curve $\gamma\colon[0,1]\to M$ with $\gamma(0)=x_{0}$ and
$\gamma(1)=x$, we have 
\begin{equation}
\nabla d(x)-\nabla d(x_{0})=\int_{0}^{1}\frac{\mathrm{d}}{\mathrm{d}t}\nabla d(\gamma)\,\mathrm{d}t=\int_{0}^{1}\nabla^{2}d(\gamma)\cdot\gamma'\,\mathrm{d}t\;,
\end{equation}
and therefore 
\begin{equation}
|\nabla d(x_{0})-\nabla d(x)|\le\|\nabla^{2}d\|_{L^{\infty}(N_{{\delta}/2})}\,\dist_{M}(x,x_{0})\le C\,\dist_{\R^{n}}(x,x_{0})\;.\label{eq ntg Lip}
\end{equation}
If $\dist_{\R^{n}}(x,x_{0})\le r$ for sufficiently small $r>0$,
we obtain from (\ref{eq ntg 0-trick}), (\ref{eq ntg alt}), and (\ref{eq ntg Lip})
\begin{equation}
\left\langle \nabla d(x_{0}),-\nu_{M}(x)\right\rangle \ge\frac{1}{\sqrt{1+(2L)^{2}}}-C\,\dist_{\R^{n}}(x,x_{0})\ge\frac{1}{\sqrt{1+(3L)^{2}}}\;.
\end{equation}
That means that in the ball $B_{r}(x_{0})$, $M$ is locally graphical
over the hyperplane determined by $\nabla d(x_{0})$. The gradient
is bounded by $3L$ because for a hyperplane the corresponding endomorphism
$B$ is the identity. Thus, $M$ is also locally graphical over the
hyperplane $\textrm{T}_{x_{0}}M$ with gradient bounded by $8L$.
The calculation goes like this: Let $\xi,\eta\in\R^{n+1}$ be two
unit-vectors with $\xi^{1},\eta^{1}\ge(1+(3L)^{2})^{-1/2}$. Then
the remaining components $\hat{\xi}=(\xi^{2},\ldots,\xi^{n+1})$ satisfy
\[
\left|\hat{\xi}\right|^{2}=1-(\xi^{1})^{2}\le1-\frac{1}{1+(3L)^{2}}=\frac{(3L)^{2}}{1+(3L)^{2}}\;,
\]
and analogous for $\eta$. Hence, 
\[
\left\langle \xi,\eta\right\rangle =\xi^{1}\,\eta^{1}+\left\langle \hat{\xi},\hat{\eta}\right\rangle \ge\frac{1}{1+(3L)^{2}}-\frac{(3L)^{2}}{1+(3L)^{2}}=\frac{1-(3L)^{2}}{1+(3L)^{2}}\;.
\]
We want to write this in the form (compare to (\ref{eq ntg alt davor}))
\[
\left\langle \xi,\eta\right\rangle \ge\frac{1-(3L)^{2}}{1+(3L)^{2}}\stackrel{!}{=}\frac{1}{\sqrt{1+L'^{2}}}.
\]
We obtain 
\[
L'=\sqrt{\frac{(1+(3L)^{2})^{2}}{(1-(3L)^{2})^{2}}-1}=\frac{\sqrt{4(3L)^{2}}}{1-(3L)^{2}}\stackrel{L<\frac{1}{6}}{<}8L\;.
\]

Now, we need to prove that not only the normals point in the right
directions and we have local (on $M$) graph representations, but
we also need to prove that the projection to the hyperplane $\mathrm{T}_{x_{0}}M$
is one-to-one. Assume that this is not the case and there exist two
points whose projections on $\mathrm{T}_{x_{0}}M$ are the same. Let
$a>0$ be the Euclidean distance of those points. Because $M$ is
a normal graph over $N$ with gradient bounded by $\frac{1}{6}$ and
$M$ is in a tubular neighborhood of half the maximal thickness (we
are sufficently far away from focal points), we can make the radius
$r$ small depending on $\sup|A_{N}|$, that in this case the projection
of the two points to $N$ have $N$-distance bounded by $\frac{1}{2}a$.
Because $M$ is a normal graph over $N$ with gradient bound $\frac{1}{6}$,
the difference in distance to $N$ of the two points must be bounded
by $\frac{1}{12}a$. But if $r$ is sufficiently small compared $\sup|A_{N}|$,
then the Euclidean squared distance $|p-q|^{2}$ of two points is
comparable to $|d(p)-d(q)|^{2}+|\pi_{N}(p)-\pi_{N}(q)|^{2}$. As we
can see, this is not the case for the above points. Therefore, the
assumption that there exist two points in $M\cap B_{r}(x_{0})$ whose
projections to $\mathrm{T}_{x_{0}}M$ agree is false. We conclude
that $M\cap B_{r}(x_{0})$ is graphical over $\mathrm{T}_{x_{0}}M$.
\end{proof}

\subsection{Hypersurfaces close to each other}

In this paragraph, we show that a hypersurface $M$ with bounded curvature
which lies sufficiently close to a hypersurface $N$ can be written
as a normal graph over $N$. In fact, the $C^{1}$-norm of the graphical
representation is small if $M$ is only close enough to $N$; very
much in the spirit of the interpolation inequality $\|u\|_{C^{1}}^{2}\le C\,\|u\|_{C^{0}}\,\|u\|_{C^{2}}$.
\begin{prop}
\label{prop close hypersurfaces} Let $N\subset\R^{n+1}$ be a hypersurface
with bounded curvature $|A_{N}|\le C<\infty$. Let $0<\delta<(2\,C)^{-1}$
be chosen such that $\delta$ is the thickness of a tubular neighborhood
of $N$, denoted by $N_{\delta}$. Let $M\subset N_{\delta}$ be a
hypersurface, also of bounded curvature $|A_{M}|\le C$. Let $x_{0}\in M$
be a point with buffer $r>0$ in $M$. Then holds 
\begin{equation}
|\nabla^{M}d|^{2}(x_{0})\le\delta\cdot\max\left\{ 6\,C,\,\left(\frac{\pi}{r}\right)^{2}\delta\right\} \;.
\end{equation}
Herein, $d\colon N_{\delta}\to(-\delta,\delta)$ denotes the signed
distance to $N$.
\end{prop}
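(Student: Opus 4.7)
The plan is to travel along an $M$-geodesic from $x_0$ and track the signed distance $d$ as a one-variable function $\phi$, using $|d|<\delta$ together with an ambient Hessian bound to squeeze $|\nabla^M d|(x_0)$. Set $V:=|\nabla^M d|(x_0)$ and, invoking the buffer hypothesis, take the unit-speed $M$-geodesic $\gamma\colon[0,r]\to M$ with $\gamma(0)=x_0$ and $\gamma'(0)=\pm\nabla^M d(x_0)/V$. I would choose the sign so that $\phi(s):=d(\gamma(s))$ is pushed toward the nearer barrier, namely $+$ if $d(x_0)\ge 0$ and $-$ if $d(x_0)<0$. This one-sided choice is what lets the constant $6C$ appear rather than $12C$ at the end.

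Next I would derive a second-derivative bound on $\phi$. Because $d$ is signed distance to $N$, $|\nabla d|=1$ in $N_\delta$ and the tubular-neighborhood calculus gives $\operatorname{Hess} d$ eigenvalues $-\kappa_i/(1-d\kappa_i)$ tangentially to level sets of $d$, with a zero eigenvalue normally; combining $|A_N|\le C$ with $\delta<1/(2C)$ yields $|\operatorname{Hess} d|\le 2C$. The $M$-geodesic equation $\nabla^{\R^{n+1}}_{\gamma'}\gamma'=-h_M(\gamma',\gamma')\,\nu_M$ has Euclidean length at most $C$, so the chain rule
\[
\phi''(s)=\operatorname{Hess} d(\gamma',\gamma')-h_M(\gamma',\gamma')\,\langle\nabla d,\nu_M\rangle
\]
gives $|\phi''|\le 3C=:K$ throughout $[0,r]$.

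From here the argument collapses to a one-variable Taylor estimate. In the case $d(x_0)\ge 0$ we have $\phi'(0)=V$, and $\phi''\ge -K$ gives $\phi(s)\ge \phi(0)+Vs-Ks^2/2$, so that at the critical time $s_\ast:=V/K$ the function has climbed by $V^2/(2K)$. Provided $s_\ast\le r$, the constraint $\phi<\delta$ forces $V^2\le 2K\delta=6C\delta$; the case $d(x_0)<0$ is symmetric. If instead $s_\ast>r$ then the buffer is exhausted first, and a sine-envelope comparison on $[0,r]$—the extremal admissible profile being $\delta\sin(\pi s/r)$—yields $V\le(\pi/r)\,\delta$ and hence $V^2\le(\pi/r)^2\delta^2$. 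Taking the worse of the two regimes produces exactly the stated maximum.

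The main technical obstacle I anticipate is the sharpness of the constants. The sign case split on $d(x_0)$ is essential to obtain $6C$ instead of $12C$, and the sine-envelope comparison (rather than a raw Taylor estimate) is needed to realize the $\pi^2$ factor in the short-buffer regime. All other ingredients—the Hessian formula for a signed-distance function inside $N_\delta$, the $M$-geodesic second-fundamental-form identity, and extending the geodesic via the buffer hypothesis—are routine, so once the right geodesic and direction are fixed the proof should compress into a short, self-contained argument.
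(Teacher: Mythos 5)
Your argument takes a genuinely different route from the paper's. The paper does not use a geodesic: it follows the integral curve of the normalized tangential gradient $\nabla^{M}d/|\nabla^{M}d|$ on $M$ and tracks the angle $\arccos\langle\nabla d,-\nu_{M}\rangle$, proving its rate of change along that curve is bounded by $3C$ and then integrating to estimate how much $d$ can increase before the curve either exhausts the buffer $r$ or hits a critical point of $d|_{M}$. Your reduction to a one-variable function $\phi=d\circ\gamma$ along a unit-speed $M$-geodesic in the gradient direction, using the Hessian bound $|\nabla^{2}d|_{\mathrm{op}}\le 2C$ together with $|h_{M}|\le C$ to get $|\phi''|\le 3C$, is more elementary and arrives at the same numerology ($3C$, then $6C\delta$) in the long-buffer case. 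Both give exactly the paper's bound there.

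Your short-buffer case, however, is not correctly justified as stated. The comparison profile $\delta\sin(\pi s/r)$ has second derivative of magnitude up to $\delta(\pi/r)^{2}$, so it is admissible as an extremal profile against $|\phi''|\le K=3C$ only when $\delta(\pi/r)^{2}\le 3C$; this is not a hypothesis, and when it fails the sine envelope is not the extremal admissible profile, so the comparison you describe breaks down. Fortunately the raw Taylor estimate you already wrote down — and mistakenly thought insufficient — closes this case and even gives a sharper constant. When $s_{*}=V/K>r$, evaluate the parabola at $s=r$: from
\[
\delta>\phi(r)\ge\phi(0)+Vr-\tfrac{1}{2}Kr^{2}\ge Vr-\tfrac{1}{2}Kr^{2}
\]
and $Kr<V$ you obtain $\delta>Vr-\tfrac{1}{2}Vr=\tfrac{1}{2}Vr$, hence $V<2\delta/r$ and $V^{2}<4\delta^{2}/r^{2}<(\pi/r)^{2}\delta^{2}$. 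So the overall structure of your proof is sound; just replace the sine-envelope step with this two-line parabola observation and drop the worry about needing the $\pi^{2}$ factor.
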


\begin{proof}
Without loss of generality, we assume $d(x_{0})\ge0$. Locally around
$x_{0}$, we choose a continuous normal $\nu_{M}$ of $M$ such that
$w_{0}\coloneqq w(x_{0})\coloneqq\left\langle \nabla d,-\nu_{M}\right\rangle |_{x_{0}}\ge0$.
A continuous choice of a normal is always possible in $M$-balls with
radius bounded by $\frac{\pi}{2C}$. This is because we can join any
two points of the ball by a curve through $x_{0}$ of length less
than $\frac{\pi}{C}$. Noting that $C$ bounds the curvature, the
direction of a normal is changing along such a curve by an angle less
than $\pi$. But any point admits only two directions for a normal,
which are separated by an angle of $\pi$. Thus, a normal field which
is continuously extended along ``short'' curves starting from $x_{0}$
is continuous.

In what follows, we note that $|\nabla d|=1$, $\nabla^{2}d(\cdot,\nabla d)=0$
and $|\nabla^{2}d|_{\mathrm{op}}\le2\,C\,$ hold, where $|\cdot|_{\mathrm{op}}$
denotes the operator norm (largest eigenvalue in modulus). This inequality
follows from $|d|<\delta<(2\,C)^{-1}$ and that the eigenvalues of
$\nabla^{2}d$ which correspond to directions perpendicular to $\nabla d$
are given by $-\frac{\kappa_{i}}{1-d\,\kappa_{i}}$, where $\kappa_{i}$
denote the principal curvatures of $N$ at the closest point on $N$.

The assertion is trivial if $|\nabla^{M}d(x_{0})|=0$ holds. Therefore,
we assume $|\nabla^{M}d(x_{0})|\neq0$ from now on. Let $x(t)$ be
the maximal solution of the initial value problem 
\begin{equation}
\dot{x}(t)=\frac{\nabla^{M}d(x(t))}{|\nabla^{M}d|(x(t))}=\frac{\nabla d(x(t))+w(x(t))\,\nu_{M}(x(t))}{\sqrt{1-w^{2}(x(t))}}
\end{equation}
with $x(0)=x_{0}$ and $w\coloneqq\left\langle \nabla d,-\nu_{M}\right\rangle $.
With the second fundamental form $h$ of $M$ and noting that $|\nabla^{M}d|=\sqrt{1-w^{2}}=|\nu_{M}+w\,\nabla d|$,
along this curve holds 
\begin{equation}
\begin{split}\left|\frac{\mathrm{d}}{\mathrm{d}t}w(x(t))\right| & =\left|\left\langle \nabla_{\dot{x}}\nabla d,-\nu_{M}\right\rangle +\left\langle \nabla d,-\nabla_{\dot{x}}\nu_{M}\right\rangle \right|\\
 & =\left|-\nabla^{2}d(\dot{x},\nu_{M}+w\,\nabla d)-h(\dot{x},\nabla^{M}d)\right|\\
 & \le3\,C\sqrt{1-w^{2}}\;.
\end{split}
\end{equation}
It follows for the change in angle $\left|\frac{\mathrm{d}}{\mathrm{d}t}\arccos(w(x(t)))\right|\le3\,C$,
and therefore 
\begin{equation}
\arccos(w(x_{0}))-3\,C\,t\le\arccos(w(x(t))\le\arccos(w(x_{0}))+3\,C\,t\;.\label{eq ceo arccos}
\end{equation}
Thus, the solution $x(t)$ exists for times $0\le t<\min\{r,T\}$,
where $T\coloneqq\frac{\arccos(w(x_{0}))}{3\,C}$. It is important
here that $\alpha\coloneqq\arccos(w(x_{0}))\le\frac{\pi}{2}$ which
follows from the choice of the normal such that $w_{0}\coloneqq w(x_{0})\ge0$.
This is also crucial for the inequality in the third line of the following
estimate. 
\begin{equation}
\begin{split}\delta & \ge d(x(t))-d(x(0))=\int_{0}^{t}\frac{\mathrm{d}}{\mathrm{d}t}d(x(s))\,\mathrm{d}s=\int_{0}^{t}\left\langle \nabla d,\dot{x}\right\rangle \,\mathrm{d}s\\
 & =\int_{0}^{t}\sqrt{1-w^{2}(x)}\,\textrm{d}s=\int_{0}^{t}\sin\arccos w(x)\,\textrm{d}s\\
 & \ge\int_{0}^{t}\sin(\arccos(w_{0})-3\,C\,s)\,\textrm{d}s=\frac{1}{3\,C}(\cos(\alpha-3\,C\,t)-w_{0})\\
 & =\frac{1}{3\,C}
\end{split}
\end{equation}
From this we infer 
\begin{equation}
|\nabla^{M}d|^{2}(x_{0})=1-w_{0}^{2}\le(1+w_{0})\,\frac{\alpha}{t}\,\delta\le2\arccos(w_{0})\,\frac{1}{t}\,\delta\;.
\end{equation}
In the case that the solution $x(t)$ exists up to time $T$, we obtain
$|\nabla^{M}d|^{2}(x_{0})\le6\,C\,\delta$ from the last inequality
by $t\to T$. Otherwise, from $t\to r$ follows (note $\arcsin x\le\frac{\pi}{2}\,x$)
\begin{equation}
\frac{2}{\pi}\,|\nabla^{M}d|(x_{0})\le\frac{|\nabla^{M}d|^{2}(x_{0})}{\arcsin(|\nabla^{M}d|(x_{0}))}=\frac{|\nabla^{M}d|^{2}(x_{0})}{\arccos(w_{0})}\le2\,\frac{1}{r}\,\delta\;.
\end{equation}
The assertion follows.
\end{proof}
\begin{cor}
\label{cor close hypersurfaces} In the situation of Proposition \ref{prop close hypersurfaces},
but with $\delta<(24\,C)^{-1}$ and $\delta<\frac{r}{2\pi}$, we denote
the projection from $M$ onto the closest point in $N$ with $p\colon M\to N$.
Let $M'$ be a subset of $M$ with buffer $r>0$. Then $p|_{M'}$
is a local diffeomorphism and $M'$ is locally graphical over $N$
with gradient bounded by $\max\left\{ \sqrt{12\,C\,\delta},\,\sqrt{2}\frac{\pi}{r}\,\delta\right\} $.
\end{cor}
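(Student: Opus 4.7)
The plan is to apply Proposition \ref{prop close hypersurfaces} pointwise on $M'$ to control the angle between $\nu_M$ and $\nu_N$, and then translate that angle bound into a gradient bound via the formula for $\langle\nabla d,-\nu_M\rangle$ in a normal-graph parametrization that is implicit in the proof of Proposition \ref{prop geometry of normal graphs}.

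First I observe that every $x_0\in M'$ satisfies the hypotheses of Proposition \ref{prop close hypersurfaces}, which yields $|\nabla^M d|^2(x_0)\le\delta\max\{6C,(\pi/r)^2\delta\}$. The strengthened hypotheses $\delta<(24C)^{-1}$ and $\delta<r/(2\pi)$ force $6C\delta<1/4$ and $(\pi\delta/r)^2<1/4$, so $|\nabla^M d|^2<1/4$ on all of $M'$. Choosing $\nu_M$ locally continuous with $w:=\langle\nabla d,-\nu_M\rangle\ge 0$, one obtains $w>\sqrt{3}/2$. The condition $w\neq 0$ is equivalent to $\nu_M$ being transverse to $T_{p(x_0)}N$, which makes $\mathrm{D}p|_{x_0}$ (the projection along $\nu_N$ restricted to $T_{x_0}M$) an isomorphism. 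By the inverse function theorem, $p|_{M'}$ is then a local diffeomorphism, and inverting it locally and setting $u:=d\circ(p|_U)^{-1}$ realizes the relevant neighborhood in $M'$ as the normal graph of $u$ over $p(U)\subset N$.

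The second step is the gradient estimate. The plan is to use the identity (derived in the proof of Proposition \ref{prop geometry of normal graphs} and reused in Proposition \ref{prop ntg})
\[
w = \frac{1}{\sqrt{1+|\mathrm{D}u\cdot B|^2}},\qquad B=(I-uA)^{-1}=\sum_{a=0}^{\infty}(uA)^a,
\]
which gives $|\mathrm{D}u\cdot B|^2 = 1/w^2-1 = |\nabla^M d|^2/w^2$. Since $|u|<\delta$ and $|A_N|\le C$ with $\delta C<1/24$, the inverse $B^{-1}=I-uA$ has operator norm at most $25/24$, hence
\[
|\mathrm{D}u|^2 \le (1+\delta C)^2\,|\mathrm{D}u\cdot B|^2 < (25/24)^2\cdot(4/3)\cdot|\nabla^M d|^2 < 2\,|\nabla^M d|^2.
\]
Inserting the bound from Proposition \ref{prop close hypersurfaces} and taking square roots yields the asserted $|\mathrm{D}u|\le\max\{\sqrt{12C\delta},\sqrt{2}(\pi/r)\delta\}$. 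The only delicate point is arranging the thresholds $24C$ and $2\pi$ tightly enough that $(1+\delta C)^2/w^2<2$, which is exactly what produces the sharp factor $\sqrt{2}$ in the final bound; beyond that, the corollary is essentially bookkeeping on top of Propositions \ref{prop close hypersurfaces} and \ref{prop ntg}.
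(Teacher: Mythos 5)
Your proof is correct and follows essentially the same route as the paper's own proof: Proposition \ref{prop close hypersurfaces} is applied pointwise to get $|\nabla^M d|^2<\frac14$, this is used to show $\mathrm{D}p$ is an isomorphism (hence $p$ is a local diffeomorphism), and then the identity $w=\bigl(1+|\mathrm{D}u\cdot B|^2\bigr)^{-1/2}$ from the normal-graph computations together with a bound on $\|B^{-1}\|=\|I-uA\|$ is inverted to produce the gradient estimate. The only difference is cosmetic arithmetic: you bound $\|I-uA\|\le 1+\delta C<\frac{25}{24}$, whereas the paper uses the more generous factor $\frac{7}{6}$; both give $(\text{factor})^2\cdot\frac43<2$, which is all that is needed.
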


\begin{proof}
From Proposition \ref{prop close hypersurfaces} we have 
\begin{equation}
|\nabla^{M}d|^{2}\le\delta\,\max\left\{ 6\,C,\,\left(\frac{\pi}{r}\right)^{2}\,\delta\right\} <\frac{1}{4}\;,
\end{equation}
and it follows that in points of $M'$, $\mathrm{D}p$ is an isomorphism
of the corresponding tangential spaces. By the inverse function theorem,
$p$ is a local diffeomorphism. The representation function of the
local graphical representation of $M'$ over $N$ as a normal graph
is denoted by $u$. For the gradient bound we make use of (notice
(\ref{eq ntg alt davor})): 
\begin{equation}
|\nabla^{M}d|^{2}=\frac{|\nabla u\cdot(I-uA)^{-1}|^{2}}{1+|\nabla u\cdot(I-uA)^{-1}|^{2}}\ge\frac{(\frac{6}{7})^{2}|\nabla u|^{2}}{1+(\frac{6}{7})^{2}|\nabla u|^{2}}\,.
\end{equation}
This yields 
\begin{equation}
\begin{split}|\nabla u|^{2} & \le\left(\frac{7}{6}\right)^{2}\,\frac{|\nabla^{M}d|^{2}}{1-|\nabla^{M}d|^{2}}\le\left(\frac{7}{6}\right)^{2}\,\frac{4}{3}\,|\nabla^{M}d|^{2}\le2\,|\nabla^{M}d|^{2}\\
 & \le\max\left\{ 12\,C\,\delta,2\left(\frac{\pi}{r}\right)^{2}\,\delta^{2}\right\} \:.\qedhere
\end{split}
\end{equation}
\end{proof}
In the special situation we face in our applications, we can overcome
that the representation as a normal graph is only local. We formulate
this as 
\global\long\def\hx{\hat{x}}%

\begin{lem}
\label{lem close hypersurfaces} In the situation of Corollary \ref{cor close hypersurfaces},
let now be $M$ specifically given as $M=\graph u$ for a smooth function
$u\colon\Omega\to\R$. The set $\Omega\subset\R^{n}$ is assumed to
be open, smooth, and bounded and we assume $u(\hx)\to\infty$ for
$\hx\to\hx_{0}\in\partial\Omega$. Moreover, let $N$ be specifically
given as $N=\partial\Omega\times\R$.

For $\delta>0$, chosen like in Corollary \ref{cor close hypersurfaces},
there exists $a>0$ such that $M\cap\{x^{n+1}>a\}$ lies in the tubular
neighborhood $N_{\delta}$ and $M\cap\{x^{n+1}>a+1\}$ is a normal
graph over $N$ with gradient bounded by $\max\left\{ \sqrt{12\,C\,\delta},\,\sqrt{2}\,\pi\,\delta\right\} $.
\end{lem}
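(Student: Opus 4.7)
My plan has three ingredients. First, I would choose $a$ above $a_0 := \max_{K_\delta} u$, where $K_\delta := \{\hat x \in \Omega : \dist(\hat x, \partial \Omega) \ge \delta\}$ is a compact subset of $\Omega$ by the boundedness of $\Omega$, so that $u$ is bounded there by continuity. For any such $a$, if $u(\hat x) > a$ then necessarily $\dist(\hat x, \partial\Omega) < \delta$; since the tubular diffeomorphism of $N = \partial\Omega \times \R$ factors as the tubular diffeomorphism of $\partial\Omega \subset \R^n$ times the identity in the $x^{n+1}$-direction, we have $N_\delta = \hat N_\delta \times \R$ with $\hat N_\delta := \{\dist(\cdot, \partial\Omega) < \delta\}$, and hence $M \cap \{x^{n+1} > a\} \subset N_\delta$ as required.

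Second, I would apply Corollary \ref{cor close hypersurfaces} with $M_a := M \cap \{x^{n+1} > a\}$ in the role of the ambient $M$, $M_{a+1} := M \cap \{x^{n+1} > a+1\}$ in the role of $M'$, and buffer radius $r = 1$. The buffer hypothesis is immediate: any curve in $M_a$ of length at most $1$ starting at a point of $M_{a+1}$ has $x^{n+1}$-coordinate $> a$ throughout, so it stays in the open set $M_a$ and is extendable. The corollary then furnishes local normal graph representations of $M_{a+1}$ over $N$ with gradient bounded by $\max\{\sqrt{12\,C\,\delta}, \sqrt{2}\,\pi\,\delta\}$ and makes the projection $\pi := \pi_N|_{M_{a+1}} : M_{a+1} \to N$ a local diffeomorphism.

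The main obstacle will be upgrading these local representations to a single global normal graph by showing that $\pi$ is injective; I would argue this via a monotonicity analysis along each inward normal fiber. Fix $p_0 \in \partial\Omega$ and set $\gamma(d) := p_0 - d\,\nu(p_0)$ for $d \in (0, \delta)$, which by the tubular property lies in $\hat N_\delta \cap \Omega$, and let $\phi(d) := u(\gamma(d))$. From $|\nabla^M d|^2 < 1/4$ on $M_{a+1}$ (guaranteed by the choice of $\delta$ in Corollary \ref{cor close hypersurfaces}) together with $\nabla d = (-\nu(p_0), 0) \in \R^{n+1}$ and $\nu_M = (-\nabla u, 1)/\sqrt{1 + |\nabla u|^2}$, one reads off
\begin{equation*}
\frac{(\nabla u \cdot \nu(p_0))^2}{1 + |\nabla u|^2} \;=\; \langle \nu_M, \nabla d \rangle^2 \;=\; 1 - |\nabla^M d|^2 \;\ge\; \tfrac{3}{4},
\end{equation*}
so $\phi'(d) = -\nabla u \cdot \nu(p_0)$ does not vanish wherever $\phi(d) > a+1$, and $\phi$ is strictly monotonic on every connected component of $\{\phi > a+1\} \subset (0, \delta)$. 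Since $\phi(d) \to +\infty$ as $d \to 0^+$ while $\phi(d) \to u(p_0 - \delta\,\nu(p_0)) \le a_0 \le a$ as $d \to \delta^-$ (because $p_0 - \delta\,\nu(p_0) \in K_\delta$), the leftmost component has the form $(0, d^*)$ on which $\phi$ decreases strictly from $+\infty$ to $a+1$; any further component $(d_1, d_2) \subset (d^*, \delta)$ would satisfy $\phi(d_1) = \phi(d_2) = a+1$ with $\phi$ strictly monotonic between, which is impossible. Hence for each $t > a+1$ there is a unique $d \in (0, \delta)$ with $u(\gamma(d)) = t$, making $\pi$ injective; the resulting function $v(p_0, t) := d$ realizes $M_{a+1}$ as a normal graph over $N$ with the claimed gradient bound.
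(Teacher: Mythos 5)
Your proposal is correct, and its skeleton matches the paper's: pick $a$ as a bound for $u$ on the $\delta$-interior $\{\dist(\cdot,\partial\Omega)\ge\delta\}$ (your $\max$ is indeed the right choice), apply Corollary \ref{cor close hypersurfaces} to $M'=M\cap\{x^{n+1}>a+1\}$ inside $M\cap\{x^{n+1}>a\}$ with buffer $r=1$, and then upgrade the local normal graph representations to a single one by proving injectivity of the nearest-point projection $p$ onto $N=\partial\Omega\times\R$. Where you genuinely diverge is in that last, central step. The paper lifts the vertical field via $X=(\mathrm{D}p)^{-1}(e_{n+1})$, flows it, uses $p\circ\alpha(x,s)=p(x)+s\,e_{n+1}$ to confine the flow line to a single normal fiber, and derives the contradiction $h=u(\hat y)=h+s'$ from the graph structure. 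You instead work fiberwise from the start: along $d\mapsto p_0-d\,\nu(p_0)$ you convert the bound $|\nabla^M d|^2<\tfrac14$ (available on $M'$ from Proposition \ref{prop close hypersurfaces}) into $\partial_d\,u\neq 0$ wherever $u>a+1$, and combine strict monotonicity with $u\to\infty$ at $\partial\Omega$ and $u\le a$ at distance $\delta$ to see that each fiber meets $M'$ at most once (in fact exactly once for heights above $a+1$). Your route is more elementary and more quantitative: it avoids introducing the flow of $X$ (whose existence on $M'\times[0,\infty)$ the paper uses without comment, which tacitly requires a completeness argument), and it yields for free that the representation function is defined on all of $\partial\Omega\times(a+1,\infty)$, not merely on a subset of $N$. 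The paper's flow argument, in turn, never needs the explicit graphical formula for $\langle\nabla d,\nu_M\rangle$ along fibers and stays closer to the abstract projection picture. Both then obtain the gradient bound $\max\{\sqrt{12\,C\,\delta},\,\sqrt{2}\,\pi\,\delta\}$ directly from Corollary \ref{cor close hypersurfaces}, so no gap there. (Incidentally, your choice $a>\max_{K_\delta}u$ quietly corrects the paper's ``$a=\min\{\dots\}$'', which must be a typo for the supremum, as in the proof of Theorem \ref{thm cb main}.)
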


\begin{proof}
We set $a=\min\{u(x)\colon x\in\Omega,\,\dist(x,\partial\Omega)\ge\delta\}$.
Then $M\cap\{x^{n+1}>a\}$ lies in the tubular neighborhood $N_{\delta}$.

We rename the hypersurfaces such that $M\cap\{x^{n+1}>a\}$ becomes
$M$ and $M\cap\{x^{n+1}>a+1\}$ becomes $M'$. On $M$ we consider
the projection $p$ onto $N$, which by Corollary \ref{cor close hypersurfaces}
is a local diffeomorphism on $M'$. Let $X\coloneqq(\mathrm{D}p)^{-1}(e_{n+1})$
and let $\alpha\colon M'\times[0,\infty)\to M'$ be the flow of the
vector field $X$. Then $p\circ\alpha(x,s)=p(x)+s\,e_{n+1}$ holds
because $\partial_{s}(p\circ\alpha)=\mathrm{D}p\cdot\partial_{s}\alpha=\mathrm{D}p\cdot\mathrm{D}p^{-1}(e_{n+1})=e_{n+1}$
and $\alpha(x,0)=x$.

Let $x=(\hx,x^{n+1})\in M'$ be arbitrary. We consider $x(s)\coloneqq\alpha(x,s)$.
Without loss of generality, we may assume that $e_{1}$ is the normal
vector to $N$ at $p(x)$ and that $p(x)=(0,\ldots,0,h)$ holds. Then
$x(s)$ is of the form 
\begin{equation}
x(s)=\big(x^{1}(s),0,\ldots,0,h+s\big)=\big(x^{1}(s),0,\ldots,0,u(\hx(s))\big)\;.
\end{equation}
It follows $x^{1}(s)\to0$ for $s\to\infty$. Suppose there is another
point $y\in M'$ with $p(y)=p(x)$. Then $y=(y^{1},0,\ldots,0,h)$
holds, and we may assume without loss of generality that $y^{1}$
is between $x^{1}=x^{1}(0)$ and $0$. Because $x^{1}(s)\to0$, there
is $s'>0$ such that $x^{1}(s')=y^{1}$. But then $h=u(\hat{y})=u(\hx)=h+s'$,
a contradiction.

The argument shows that $p$ is injective on $M'$. So $p$ is an
injective local diffeomorphism. As a consequence, $M'$ is a normal
graph over $N$.

The gradient bound follows from Corollary \ref{cor close hypersurfaces}.
\end{proof}

\section{Set flow, domain flow, and $\alpha$-noncollapsed mean curvature
flow}

\label{sec noncollapsedMCF} We follow \cite{HK}, which is based
on \cite{Ilm1,Ilm2,ES,CGG}. We also refer to \cite{Mau1}.
\begin{defn}[Set flow]
 Let $I\subset\R$ be an interval and let $(K_{t})_{t\in I}$ be
a family of closed subsets of $\R^{n+1}$. We say that \emph{$(K_{t})_{t\in I}$
is a set flow} if for any smooth mean curvature flow $(M_{t})_{t\in[t_{0},t_{1}]}$
of closed hypersurfaces and with $[t_{0},t_{1}]\subset I$, we have
\begin{equation}
K_{t_{0}}\cap M_{t_{0}}=\emptyset\quad\Longrightarrow\quad\forall t\in[t_{0},t_{1}]:K_{t}\cap M_{t}=\emptyset\;.
\end{equation}
\end{defn}

\begin{defn}[Level-set flow]
 The level-set flow is the maximal set flow, where ``maximal''
is understood with respect to inclusion of sets. 
\end{defn}

\begin{prop}
For any compact subset $K_{0}\subset\R^{n+1}$ there exists a unique
level-set flow $(K_{t})_{t\in[0,\infty)}$. It coincides with the
level-set flow from the definition of Evans-Spruck and Chen-Giga-Goto.
\end{prop}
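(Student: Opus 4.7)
The plan is to build the maximal set flow by unioning all set flows starting from (subsets of) $K_0$, then to identify the result with the viscosity-solution construction of Evans--Spruck and Chen--Giga--Goto.

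For \emph{existence}, let $\mathcal{F}$ denote the family of all set flows $(L_s)_{s \ge 0}$ with $L_0 \subseteq K_0$ (non-empty, since the constantly empty family lies in it), and set
\[
K_t \coloneqq \overline{\bigcup \bigl\{ L_t : (L_s)_{s\ge 0} \in \mathcal{F} \bigr\}}\,.
\]
I would then verify that $(K_t)_{t \ge 0}$ is itself a set flow. Given a smooth closed MCF $(M_t)_{t \in [t_0,t_1]}$ with $K_{t_0} \cap M_{t_0} = \emptyset$, compactness of $M_{t_0}$ and closedness of $K_{t_0}$ give $\dist(K_{t_0},M_{t_0}) \ge 3\eta > 0$. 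By short-time existence for smooth MCF one produces a thickened smooth MCF $(\widetilde M^\varepsilon_t)_t$ with $\widetilde M^\varepsilon_{t_0}$ lying in the $\varepsilon$-neighborhood of $M_{t_0}$ (and enclosing $M_{t_0}$) for some $\varepsilon < \eta$, existing on a short interval. Every $L \in \mathcal F$ satisfies $L_{t_0} \cap \widetilde M^\varepsilon_{t_0} = \emptyset$, so by the set-flow axiom each $L_t$ avoids $\widetilde M^\varepsilon_t$; hence $\bigcup L_t$, and therefore $K_t$, remains in the complement of $M_t \subset \widetilde M^\varepsilon_t$. Iterating this local argument finitely many times covers $[t_0,t_1]$ and shows $(K_t)$ is a set flow. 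By construction it is the inclusion-maximal one.

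\emph{Uniqueness} is immediate: if $(K_t)$ and $(K'_t)$ are both inclusion-maximal set flows with initial set $K_0$, then maximality applied to each in turn gives $K_t \supseteq K'_t \supseteq K_t$, so they coincide.

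For the \emph{identification with the Evans--Spruck/Chen--Giga--Goto level-set flow}, let $u$ be the unique viscosity solution of the level-set equation with $u(\cdot,0) = \dist(\cdot,K_0) - \sup_{K_0}\dist$ (or any continuous initial datum whose zero level set is $K_0$ and that is negative on $K_0$ and positive outside), and set $\Gamma_t \coloneqq \{u(\cdot,t) \le 0\}$. The classical comparison principle for this PDE, applied against the signed-distance function to any smooth closed MCF, shows that $(\Gamma_t)$ avoids every smooth closed MCF disjoint from it initially; hence $(\Gamma_t)$ is a set flow, so $\Gamma_t \supseteq K_t$. Conversely, any set flow $(L_t)$ with $L_0 \subseteq K_0$ is contained in $\Gamma_t$: approximating $\{u > 0\}$ from inside by smooth closed hypersurfaces that evolve smoothly by MCF on small time intervals, the set-flow property of $(L_t)$ forces $L_t \cap \{u(\cdot,t) > 0\} = \emptyset$. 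Combining these two inclusions with maximality of $(K_t)$ gives $K_t = \Gamma_t$ for all $t$.

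The main obstacle is the approximation step at the end of the last paragraph: one must approximate the (possibly singular) boundary of $\{u(\cdot,t) > 0\}$ from outside by genuinely smooth closed MCFs on short time intervals so that the set-flow axiom can be invoked. At fattening or singular times of $u$ this is delicate; the cleanest route, and the one implicit in \cite{HK} (building on \cite{Ilm1,Ilm2,ES,CGG}), is to invoke the already-established equivalence between viscosity-solution and barrier/avoidance formulations, which packages precisely this approximation.
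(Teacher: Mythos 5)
The paper itself offers no proof of this proposition: it is stated as a background fact with the attribution ``We follow \cite{HK}, which is based on \cite{Ilm1,Ilm2,ES,CGG},'' so there is no in-paper argument to compare against. Your outline is nevertheless the standard one found in those references: construct the maximal set flow as a (closure of a) union, verify closedness under the avoidance axiom by interposing nearby smooth MCFs, and then identify the result with the sublevel set $\{u\le 0\}$ of the Chen--Giga--Goto/Evans--Spruck viscosity solution by a two-sided comparison. The existence and uniqueness portions read correctly, modulo the details of the separating-hypersurface step that you gloss over (one needs that the distance between two disjoint smooth MCFs is nondecreasing, so the interposed $\widetilde M^\varepsilon_t$ really stays between $M_t$ and the $L_t$'s on the short interval; iterating over a compact time interval then works because $M_t$ has uniformly bounded geometry there).

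Two points need fixing, one small and one substantive. The small one is a direction error: since $(\Gamma_t)$ is a set flow with $\Gamma_0=K_0$, maximality of $(K_t)$ gives $\Gamma_t\subseteq K_t$, not $\Gamma_t\supseteq K_t$ as written; the reverse inclusion $K_t\subseteq\Gamma_t$ is exactly what your ``Conversely'' paragraph is meant to supply, and only then do the two inclusions combine to $K_t=\Gamma_t$. (Also, the suggested initial datum $\dist(\cdot,K_0)-\sup_{K_0}\dist$ does not have zero sublevel set equal to $K_0$ --- the plain $u_0=\dist(\cdot,K_0)$ already does, and your parenthetical description ``negative on $K_0$, zero level set equal to $K_0$'' is internally inconsistent.) The substantive gap is the one you flag yourself: establishing $L_t\subseteq\Gamma_t$ for every set flow $(L_t)$ requires sweeping out $\{u(\cdot,t)>0\}$ by smooth closed MCFs, and this is genuinely delicate at singular and fattening times of $u$. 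That is precisely the content of the viscosity/barrier equivalence theorems of Evans--Spruck, Chen--Giga--Goto, and Ilmanen; acknowledging and citing it is reasonable, but one should be clear that, as written, the ``Conversely'' step is a pointer to the literature rather than a self-contained proof.
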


\begin{defn}[Domain flow]
\label{def ncMCF domain flow} Let $I\subset\R$ be an interval and
let $(\Omega_{t})_{t\in I}$ be a family of open subsets of $\R^{n+1}$.
Then $(\Omega_{t})_{t\in I}$ is a \emph{domain flow} if for any family
$(K_{t})_{t\in[a,b]}$ of compact subsets of $\R^{n}$ whose boundaries
$(\partial K_{t})_{t\in[a,b]}$ form a classical mean curvature flow,
we have $K_{a}\subset\Omega_{a}\Longrightarrow\forall t\in[a,b]\colon K_{t}\subset\Omega_{t}$;
and the same holds with $\overline{\Omega}^{c}$ instead of $\Omega$.
\end{defn}

\begin{rem}
It can be shown with the help of the Jordan-Brouwer separation theorem
that for any domain flow $(\Omega_{t})_{t\in I}$, $(\overline{\Omega_{t}})_{t\ge0}$
and $(\partial\Omega_{t})_{t\ge0}$ are set flows. However, we cannot
have sudden vanishing, a problem which set flows may exhibit.
\end{rem}

\begin{prop}
Let $K\subset\R^{n+1}$ have a mean convex $C^{2}$-boundary $\partial K$.
Then the level-set flow $(K_{t})_{t\in[0,\infty)}$ starting from
$K_{0}=K$ satisfies $K_{t_{1}}\supset K_{t_{2}}$ for any $t_{1}\le t_{2}$.
\end{prop}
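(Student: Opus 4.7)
The strategy is to combine three standard properties of the level-set flow (semigroup, monotonicity with respect to inclusion, and agreement with classical mean curvature flow while the latter exists) with a short-time classical argument exploiting mean convexity. Once this gives nestedness over a small time step, nestedness on every interval follows by iteration.

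\textbf{Step 1 (short-time classical flow).} Since $\partial K$ is a compact $C^{2}$ mean convex hypersurface, classical mean curvature flow $(M_{t})_{t\in[0,\tau]}$ starting from $\partial K$ exists for some $\tau>0$. Along this flow each point of $\partial K$ moves with velocity $-H\nu$, where $\nu$ is the outward unit normal, and $H\ge 0$ is preserved under the flow (this follows from the parabolic maximum principle applied to the evolution $\partial_{t}H=\Delta H+|A|^{2}H$). Hence every point of $\partial K_{s}$ lies on the inward side of $\partial K_{0}$, so the closed region $K_{s}$ enclosed by $M_{s}$ satisfies $K_{s}\subseteq K_{0}$ for all $s\in[0,\tau]$.

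\textbf{Step 2 (identification with level-set flow).} On the interval $[0,\tau]$ the classical mean curvature flow coincides with the level-set flow starting from $K_{0}$: both are set flows, and by the avoidance principle the level-set flow (being the maximal set flow) cannot strictly contain the classical one, while the classical flow clearly is a set flow. Therefore the $K_{s}$ produced in Step~1 agrees with $K_{s}$ in the statement, and we have $K_{s}\subseteq K_{0}$ for $0\le s\le\tau$.

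\textbf{Step 3 (semigroup and monotonicity of level-set flow).} Denote by $\Phi_{t}(\cdot)$ the time-$t$ level-set flow map on compact subsets of $\R^{n+1}$. Two standard facts (see \cite{HK,Ilm1,ES,CGG}) are:
\begin{enumerate}
\item[(i)] $\Phi_{t+s}=\Phi_{t}\circ\Phi_{s}$, so in particular $K_{t+s}=\Phi_{t}(K_{s})$;
\item[(ii)] if $A\subseteq B$ are compact then $\Phi_{t}(A)\subseteq\Phi_{t}(B)$ for every $t\ge 0$.
\end{enumerate}
Combining Steps~1--2 with (i) and (ii), for every $s\in[0,\tau]$ and every $t\ge 0$ we obtain
\begin{equation*}
K_{t+s}=\Phi_{t}(K_{s})\subseteq\Phi_{t}(K_{0})=K_{t}.
\end{equation*}
Note that this step does \emph{not} require $\partial K_{t}$ to be smooth or mean convex for $t>0$; the regularity is only used for $t=0$.

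\textbf{Step 4 (iteration).} Given $0\le t_{1}\le t_{2}$, choose $N\in\mathbb{N}$ so large that $\delta\coloneqq(t_{2}-t_{1})/N\le\tau$, and set $T_{k}\coloneqq t_{1}+k\delta$ for $k=0,1,\ldots,N$. Applying Step~3 with $t=T_{k}$ and $s=\delta$ gives $K_{T_{k+1}}\subseteq K_{T_{k}}$ for every $k$, whence
\begin{equation*}
K_{t_{2}}=K_{T_{N}}\subseteq K_{T_{N-1}}\subseteq\cdots\subseteq K_{T_{0}}=K_{t_{1}},
\end{equation*}
which is the claim.

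\textbf{Main obstacle.} The substantive input is the inclusion $K_{s}\subseteq K_{0}$ for small $s$, which rests on mean convexity being preserved by the classical flow. Everything else is formal once the semigroup property and inclusion-monotonicity of the level-set flow are quoted from the cited references.
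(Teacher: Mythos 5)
The paper offers no proof of this proposition; it is stated as background quoted from \cite{HK,Ilm1,Ilm2,ES,CGG}, so there is no internal argument to compare against. Your strategy — short-time nestedness via the classical flow, coincidence of classical and level-set flows during smooth existence, then propagation by the semigroup property and inclusion-monotonicity, then iteration — is a perfectly reasonable way to organize the standard proof, and Steps~2--4 are correct once the background facts are granted.

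The one place that deserves scrutiny is the jump in Step~1 from ``the normal velocity $-H\nu$ points inward'' to ``every point of $\partial K_s$ lies on the inward side of $\partial K_0$, so $K_s\subseteq K_0$.'' The velocity $-H\nu_{M_t}$ is inward with respect to $M_t$, not $M_0$, so for $t>0$ the particle path $X(p,t)$ has no a priori reason to remain in $K_0$; moreover the maximum principle only gives $H\ge 0$, and points where $H$ vanishes do not move at all. The statement is of course true, but it is not immediate from the infinitesimal picture. A clean way to close the gap: by the strong maximum principle, either $H\equiv 0$ on $M_0$ (impossible for a compact hypersurface in $\R^{n+1}$) or $H>0$ on $M_t$ for all $t\in(0,\tau]$; with strict positivity the map $(p,t)\mapsto X(p,t)$ is an immersion transverse to each slice and one can define a genuine arrival-time function foliating a collar of $M_0$, which yields nestedness of the $K_t$ for $t\in(0,\tau]$, and the case $t_1=0$ follows by taking the limit. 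Alternatively one can argue directly in the level-set formulation (as in Evans--Spruck), where mean convexity translates into $\partial_t u\le 0$ in the viscosity sense at $t=0$, a sign condition preserved by the equation, which immediately gives monotonicity of the sublevel sets in time. Either route is standard; I only flag that the sentence as written is asserting the conclusion rather than deriving it. With that caveat, the overall architecture of your proof is sound.
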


\begin{prop}
\label{prop ncM weak avoidance} If $(K_{t})_{t\in[0,\infty)}$ and
$(L_{t})_{t\in[0,\infty)}$ are two set flows which are initially
disjoint, then they stay disjoint.
\end{prop}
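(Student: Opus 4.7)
The natural strategy is to reduce to avoidance for the level-set flow. By the preceding proposition, the level-set flow is the maximal set flow with given compact initial data; hence $K_t \subseteq \widetilde{K}_t$ and $L_t \subseteq \widetilde{L}_t$, where $\widetilde{K}$ and $\widetilde{L}$ denote the level-set flows starting from $K_0$ and $L_0$, respectively. It therefore suffices to prove $\widetilde{K}_t \cap \widetilde{L}_t = \emptyset$ for all $t \ge 0$. This is the standard avoidance principle for the Evans--Spruck / Chen--Giga--Goto level-set formulation and is obtained from the comparison principle for viscosity solutions of the degenerate parabolic level-set equation: one picks smooth auxiliary functions $u_0, v_0 : \R^{n+1} \to \R$ whose zero sets contain $K_0$ resp.\ $L_0$, arranged so that $u_0 \le v_0$ pointwise (with strict separation on a large ball, extended by a constant outside), evolves each by the level-set PDE in the viscosity sense, and invokes comparison to get $u(\cdot,t) \le v(\cdot,t)$ for all $t$. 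This inequality preserves the disjointness of the two zero sets, and hence the disjointness of $\widetilde{K}_t$ and $\widetilde{L}_t$ which lie in $\{u(\cdot,t) = 0\}$ and $\{v(\cdot,t) = 0\}$.

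Alternatively, one can proceed directly from the defining property of set flows by interposing a classical mean curvature flow. Suppose for contradiction that $t^* := \inf\{t : K_t \cap L_t \ne \emptyset\} < \infty$. For $\tau < t^*$ close to $t^*$, the compact sets $K_\tau$ and $L_\tau$ are disjoint, so a regular level set of a mollification of $d(\cdot, K_\tau) - d(\cdot, L_\tau)$ supplies a smooth closed hypersurface $M_\tau$ that strictly separates them. Let $(M_t)_{t\in[\tau,\tau+T]}$ be the classical smooth MCF issuing from $M_\tau$. Applying the defining avoidance property of set flows to both $K$ and $L$ yields $M_t \cap K_t = \emptyset$ and $M_t \cap L_t = \emptyset$ on $[\tau,\tau+T]$. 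A short-time forward upper semi-continuity of $t \mapsto K_t$, derived by applying the set flow property to small shrinking spheres around any point $x_0 \notin K_{t_0}$, then forces $K_t$ and $L_t$ to remain in opposite components of $\R^{n+1} \setminus M_t$ throughout the interval.

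\emph{Main obstacle.} The delicate point in the direct approach is making the smooth lifetime satisfy $T > t^* - \tau$ uniformly as $\tau \to t^*$: the gap $d(K_\tau, L_\tau)$ can shrink faster than $\sqrt{2n(t^* - \tau)}$, so a shrinking-sphere separator fitted into the gap need not survive long enough. This is exactly the point that makes the reduction to the level-set flow cleaner, since the comparison principle for the viscosity formulation sidesteps the need to produce a classical separator with controlled lifetime and immediately gives the conclusion on $[0,\infty)$.
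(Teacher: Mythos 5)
The paper does not prove this proposition; it is presented as a known fact imported from the references cited at the start of the appendix (Haslhofer--Kleiner, Ilmanen, Evans--Spruck, Chen--Giga--Goto), so there is no in-text proof to compare against. Your first approach --- pass to the level-set flows via maximality and then invoke the comparison principle for the viscosity formulation --- is the correct and standard route, and it is essentially how the literature establishes avoidance for weak set flows. Two points deserve tightening. Your setup with two auxiliary functions satisfying only $u_0 \le v_0$ is not quite enough: the comparison principle then gives $u(\cdot,t)\le v(\cdot,t)$, which does not by itself keep the two zero sets apart (they can touch where $u=v=0$). The cleaner device is a single level-set function, e.g.\ a truncation of $d(\cdot,K_0)-d(\cdot,L_0)$, combined with the invariance of the level-set equation under $u\mapsto u+c$, so that $\widetilde K_t\subset\{u(\cdot,t)\le-\delta\}$ and $\widetilde L_t\subset\{u(\cdot,t)\ge\delta\}$ persist with a fixed $\delta>0$. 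You should also record that the level-set existence/uniqueness you are quoting is for compact initial data, whereas the proposition is stated for general closed sets; either reduce to the compact case (sufficient for every use of this proposition in the paper) or cite the non-compact version.

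Your second approach contains a real gap, and you have correctly diagnosed it: the smooth separator obtained at time $\tau$ need not live for a time comparable to $t^*-\tau$, and the separation between $K_t$ and $L_t$ is not a priori controlled, so the restart argument may require infinitely many steps without reaching $t^*$. This is exactly the difficulty that the viscosity/level-set comparison bypasses, because it never needs a classical separator with a prescribed lifetime. As written, the second approach is not a proof; commit to the first. One further caution if you keep any version of the direct route: the forward upper semi-continuity you invoke via a single shrinking sphere centred at $x_0\notin K_{t_0}$ does not by itself rule out sudden appearance of pieces of $K_t$ near $x_0$ at later times; one needs shrinking spheres whose extinction points sweep out a full neighbourhood of $x_0$, which is a small but necessary refinement.
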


\begin{prop}
\label{prop ncM uniqueness} If $\Omega_{0}$ is bounded and has a
strictly mean convex $C^{2}$-boundary, then there is a unique domain
flow $(\Omega_{t})_{t\in[0,\infty)}$ starting from $\Omega_{0}$.
It satisfies $\Omega_{t_{1}}\Supset\Omega_{t_{2}}$ for $t_{1}<t_{2}$.
\end{prop}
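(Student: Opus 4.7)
\textbf{Plan of proof for Proposition \ref{prop ncM uniqueness}.}

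\emph{Existence.} The plan is to take $(K_t)_{t\ge 0}$ to be the level-set flow of $\overline{\Omega_0}$ and define $\Omega_t\coloneqq\mathrm{int}(K_t)$. Short-time classical existence for strictly mean convex $C^2$-initial data produces a smooth MCF $(M_s)_{s\in[0,\tau]}$ starting from $\partial\Omega_0$ that remains strictly mean convex; by uniqueness of level-set flows, $K_s$ coincides with the closed region bounded by $M_s$ for $s\in[0,\tau]$. Mean convexity (the preceding proposition) gives $K_{t_1}\supset K_{t_2}$ for $t_1\le t_2$, and the Evans--Spruck/White non-fattening theorem for mean convex initial data gives $\mathcal H^{n+1}(\partial K_t)=0$, so $\partial K_t$ has empty interior. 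To check that $(\Omega_t)$ is a domain flow, let $(L_t)_{t\in[a,b]}$ be a classical MCF of compact sets with $L_a\subset\Omega_a$; then $\partial L_t$ is a smooth closed MCF disjoint from $\partial K_a$, so by Proposition \ref{prop ncM weak avoidance} it remains disjoint from $\partial K_t$, and by continuity and connectedness $L_t$ stays on the inside of $\partial K_t$, i.e.\ $L_t\subset K_t$; because $\partial K_t$ has empty interior and $L_t$ is reached by a continuous deformation from inside $\mathrm{int}(K_a)$, we conclude $L_t\subset\mathrm{int}(K_t)=\Omega_t$. The analogous statement for $\overline{\Omega_t}^c$ follows from the same avoidance.

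\emph{Strict monotonicity.} For any $x\in\partial\Omega_0$, strict mean convexity provides an interior ball $\overline{B_\rho(y)}\subset\overline{\Omega_0}$ tangent to $\partial\Omega_0$ at $x$ whose classical MCF is a shrinking sphere; by the domain flow avoidance this sphere stays inside $K_t$ for $t\in[0,\rho^2/(2n)]$, while its radius strictly decreases, which forces $\partial K_t$ to have moved strictly inward from $x$. Iterating and using that the property of strict mean convexity propagates along the smooth part of the flow (strong maximum principle) yields $\overline{\Omega_{t_2}}\Subset\Omega_{t_1}$ for $0\le t_1<t_2$ short; the semigroup property of the level-set flow promotes this to all $t_1<t_2$.

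\emph{Uniqueness.} Given a second domain flow $(\tilde\Omega_t)$ starting from $\Omega_0$, I would prove the two inclusions separately. For $\tilde\Omega_t\subset\Omega_t$: observe that $(\overline{\tilde\Omega_t})$ is a set flow (the outer variant of the domain flow axiom says that any classical MCF initially disjoint from $\overline{\tilde\Omega_0}$ stays disjoint); by maximality of the level-set flow, $\overline{\tilde\Omega_t}\subset K_t$, and since $\tilde\Omega_t$ is open while $\partial K_t$ has empty interior (non-fattening), $\tilde\Omega_t\subset\mathrm{int}(K_t)=\Omega_t$. For $\Omega_t\subset\tilde\Omega_t$: approximate $\Omega_0$ from inside by strictly mean convex smooth domains $\Omega_0^{-\varepsilon}\Subset\Omega_0$ with $\Omega_0^{-\varepsilon}\nearrow\Omega_0$; let $(K_t^{-\varepsilon})$ be their level-set flows, which are smooth classical MCFs on some interval $[0,\tau_\varepsilon]$. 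Since $(\tilde\Omega_t^c)$ is a set flow starting from $\Omega_0^c$, which is disjoint from $\overline{\Omega_0^{-\varepsilon}}$, Proposition \ref{prop ncM weak avoidance} gives $K_t^{-\varepsilon}\subset\tilde\Omega_t$ for all $t\ge 0$; by stability of the mean convex level-set flow under this inner approximation, $\bigcup_\varepsilon K_t^{-\varepsilon}\supset\Omega_t$, and we conclude $\Omega_t\subset\tilde\Omega_t$.

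\emph{Main obstacle.} The delicate step is extending the inner-approximation argument past the first singular time of $K_t^{-\varepsilon}$: one must know that these level-set flows continue to avoid $\tilde\Omega_t^c$ even after they become non-classical, which relies on Proposition \ref{prop ncM weak avoidance} for \emph{set} flows (rather than the purely classical avoidance built into the definition of domain flow), and on the fact that non-fattening of the mean convex level-set flow makes the inner approximations exhaust $\Omega_t$ in the limit $\varepsilon\to 0$. Establishing that stability cleanly across singular times — in particular showing $\mathrm{int}\!\big(\bigcup_\varepsilon K_t^{-\varepsilon}\big)=\Omega_t$ — is where the weight of the argument sits.
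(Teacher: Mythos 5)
Your plan is a valid line of attack, but it takes a genuinely different and heavier route than the paper, and the weight you correctly identify as sitting on the inner-approximation step is precisely what the paper's argument avoids having to carry.

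The paper does not single out the level-set flow as a privileged representative and does not invoke maximality, non-fattening, or continuous dependence under inner approximation. Instead it runs a completely symmetric argument on an arbitrary pair of domain flows $(\Omega_t)$ and $(\Psi_t)$ with the same initial data: on the short initial interval where $\partial\Omega_t$ is a classical strictly mean convex MCF, every weak solution agrees with the smooth one, so $\Omega_\varepsilon=\Psi_\varepsilon\Subset\Omega_0$ for small $\varepsilon>0$. Applying Proposition \ref{prop ncM weak avoidance} to the boundary set flows and their time-shifts then gives the two-sided squeeze $\Omega_{t-\varepsilon}\Supset\Psi_t\Supset\Omega_{t+\varepsilon}$ for $t\ge\varepsilon$, with $\varepsilon$ arbitrary below the first singular time. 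Uniqueness is concluded by the ball-shrinking trick: for $x\in\Psi_t$, a small closed ball around $x$ lies inside the open set $\Psi_t$; letting it shrink for a short time $\tau$ (so that it still contains $x$) and using the domain flow property gives $x\in\Psi_{t+\tau}$, and the squeeze with $\varepsilon=\tau$ yields $\Psi_{t+\tau}\Subset\Omega_t$, hence $x\in\Omega_t$. Reversing roles gives $\Omega_t\subset\Psi_t$. This delivers both strict monotonicity and uniqueness in a few lines, with no reference to the level-set machinery beyond the weak avoidance principle.

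By contrast, your route proves the two inclusions $\tilde\Omega_t\subset\Omega_t$ and $\Omega_t\subset\tilde\Omega_t$ asymmetrically. The first half (maximality of the level-set flow; note you do not actually need non-fattening there — an open subset of $K_t$ is automatically a subset of $\mathrm{int}(K_t)$) is fine. The second half, however, reduces the proposition to the statement that $\bigcup_\varepsilon K_t^{-\varepsilon}\supset\Omega_t$ for the inner approximations across singular times. That is a nontrivial stability result for mean-convex level-set flows, and you rightly flag it as where the real difficulty sits; as stated your proposal leaves it open. The paper's formulation sidesteps this entirely by running the short-time uniqueness and avoidance forward from a small positive time $\varepsilon$ rather than pushing an approximation family through the singular times. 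So the proposal is not wrong in spirit, but it contains an acknowledged gap that the paper's simpler, symmetric argument does not have, and it imports considerably more level-set-flow technology than the statement requires.
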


\begin{proof}
For some time, the flow $(\partial\Omega_{t})_{t}$ is smooth before
singularities form. For this time, any weak flow is unique. By Proposition
\ref{prop ncM weak avoidance}, for any $\varepsilon>0$ smaller then
the first singular time, $\Omega_{t-\varepsilon}\Supset\Omega_{t}\Supset\Omega_{t+\varepsilon}$
holds for $t\ge\varepsilon$. Any other weak solution is strictly
contained between $\Omega_{t\pm\varepsilon}$ for $t\ge\varepsilon$
as well. Note that $\varepsilon>0$ is arbitrary. Let $(\Psi_{t})_{t}$
be another weak solution. Let $t>0$. If $x\in\Psi_{t}$ then, by
openness, there is a closed ball around $x$ that is completely contained
inside $\Psi_{t}$. It takes some time $\tau$ to shrink that ball
to half its radius. Hence, $x\in\Psi_{t+\tau}\Subset\Omega_{t}$.
Since $x\in\Psi_{t}$ was arbitrary, $\Psi_{t}\subset\Omega_{t}$
follows. The same argument can be made to show the reverse inclusion.
We have just shown $\Psi_{t}=\Omega_{t}$ for arbitrary $t$.
\end{proof}
\begin{defn}
A closed subset $K_{0}\subset\R^{n+1}$ is said to be \emph{mean convex}
if $K_{t_{1}}\supset K_{t_{2}}$ for $t_{1}\le t_{2}$, where $(K_{t})_{t}$
denotes the level-set flow starting from $K_{0}$.
\end{defn}

\begin{rem*}
Adopting this definition, it is obvious that mean convexity is preserved
along the level-set flow. 
\end{rem*}
\begin{defn}
Let $K\subset\R^{n+1}$ be a closed subset. We define \emph{the mean
curvature in the viscosity sense in points $x\in\partial K$} by 
\begin{equation}
H(x)=\inf\big\{ H_{\partial A}(x)\colon A\subset K\text{ is a smooth domain and }x\in\partial A\big\}\;.
\end{equation}
The mean curvature in the viscosity sense may be infinite.
\end{defn}

\begin{defn}[$\alpha$-Noncollapsedness]
\label{def alpha-nc} Let $\alpha>0$. A closed subset of $\R^{n+1}$
is called \emph{$\alpha$-noncollapsed} if for any point $x\in\partial K$
the (viscosity) mean curvature satisfies $H(x)\in[0,\infty]$ and
there exist closed balls $\overline{B}_{\mathrm{int}}$ and $\overline{B}_{\mathrm{ext}}$
of radius $r(x)=\frac{\alpha}{H(x)}$ that contain $x$ and such that
$\overline{B}_{\mathrm{int}}\subset K$ and $\overline{B}_{\mathrm{ext}}\subset\R^{n+1}\setminus\mathrm{Int}(K)$
(see Fig.\ \ref{fig noncollapsed}).

We also say that a mean convex hypersurface $M_{t}$ is $\alpha$-noncollapsed
if the bounded closed region it bounds is $\alpha$-noncollapsed in
the above sense.

A family $(K_{t})_{t\in I}$ is called ${\alpha}$-noncollapsed if
$K_{t}$ is ${\alpha}$-noncollapsed for all $t\in I$.
\end{defn}

\begin{figure}
\label{fig noncollapsed} \centering \includegraphics[scale=0.4]{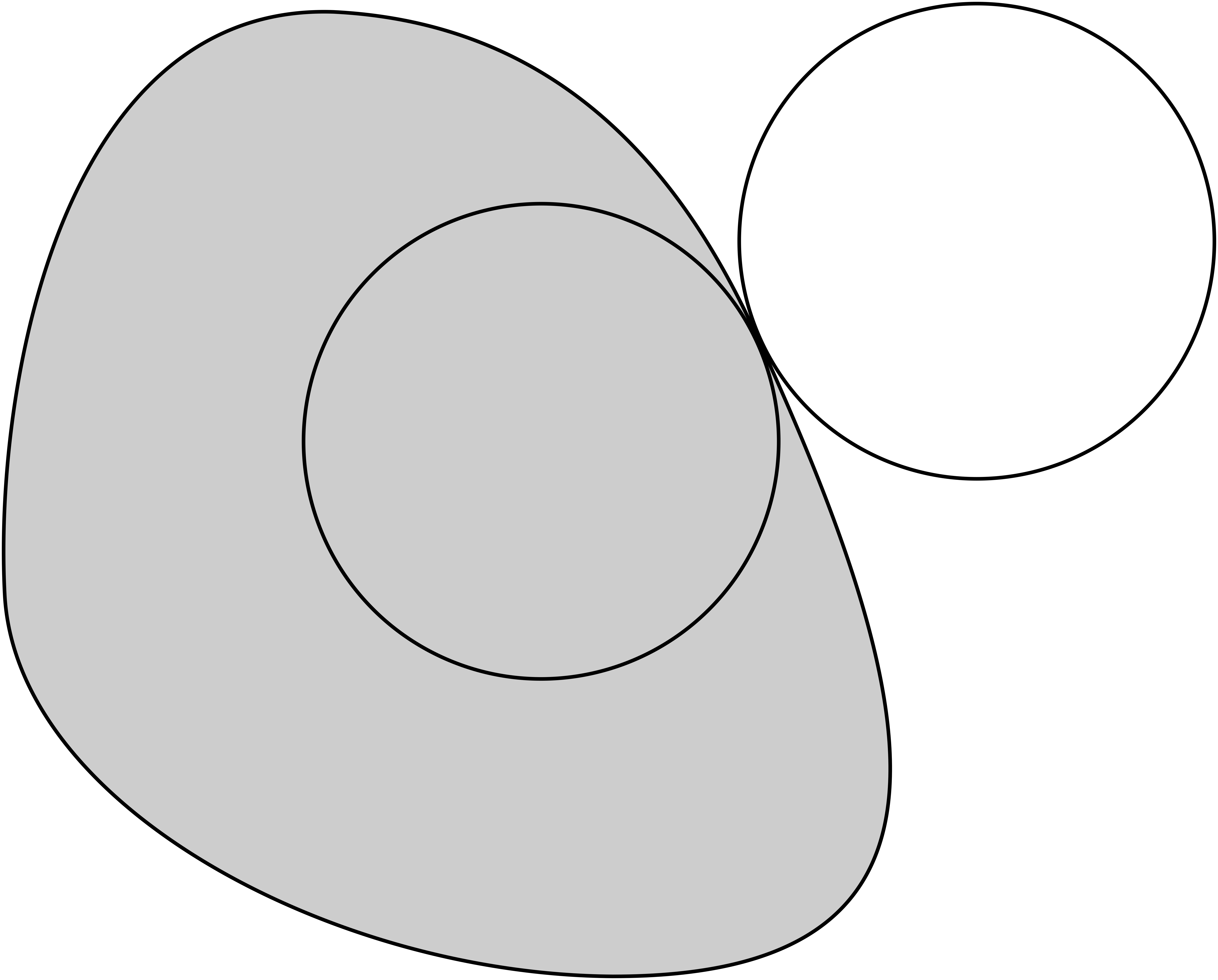}
\caption{Exterior and interior touching balls.}
\end{figure}

Smooth closed hypersurfaces with positive mean curvature $H>0$ are
$\alpha$-non\-col\-lapsed for some $\alpha>0$. B.~Andrews has
shown in \cite{And} that the hypersurface stays $\alpha$-non\-col\-lapsed
with the same $\alpha$ if one lets flow the hypersurface by its mean
curvature. This result holds even in a weak setting:
\begin{thm}
\label{thm ncMCF level-set flow alpha-noncollapsed} Let $K_{0}\subset\R^{n+1}$
be a compact, smooth, and mean convex domain that is $\alpha$-noncollapsed
for $\alpha>0$. Then the level-set flow that starts from $K_{0}$
is $\alpha$-noncollapsed. 
\end{thm}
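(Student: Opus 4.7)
The strategy is to reduce to the classical smooth result of Andrews \cite{And} and extend it to the weak setting by an approximation argument in the style of Haslhofer--Kleiner \cite{HK}. Andrews proved the analogous statement in the smooth category: if $(M_t)_{t \in [0, T]}$ is a classical mean curvature flow of closed mean convex hypersurfaces and $M_0$ is $\alpha$-noncollapsed, then $M_t$ is $\alpha$-noncollapsed for all $t \in [0, T]$. His proof applies a parabolic maximum principle to the two-point function
\[
Z(x, y, t) = \frac{2 \langle y - x, \nu(x) \rangle}{|y - x|^2} - \frac{H(x, t)}{\alpha}, \qquad x \neq y \in M_t,
\]
which encodes interior $\alpha$-noncollapsedness via $\sup_y Z \le 0$, together with its sign-reversed analogue for the exterior touching ball.

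Since $K_0$ is smooth, compact, and strictly mean convex, short-time existence produces a unique smooth mean curvature flow of $\partial K_0$ on a maximal interval $[0, T_1)$, and by Proposition \ref{prop ncM uniqueness} this smooth flow coincides with the level-set flow on $[0, T_1)$. Andrews' theorem then gives $\alpha$-noncollapsedness of $K_t$ for all $t < T_1$, which settles the claim whenever the flow remains smooth. The entire work therefore lies in extending the conclusion past the first singular time $T_1$.

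For this, the plan is to approximate $K_0$ from inside and outside by smooth closed mean convex $\alpha_\varepsilon$-noncollapsed domains $K_0^{\varepsilon, \pm}$ with $K_0^{\varepsilon, -} \Subset K_0 \Subset K_0^{\varepsilon, +}$ and $\alpha_\varepsilon \to \alpha$, and to transport the touching balls via avoidance and Hausdorff continuity. Given $x \in \partial K_t$ with viscosity mean curvature $H(x) < \infty$ (the case $H(x) = \infty$ is trivial), and any smooth domain $A \subset K_t$ with $x \in \partial A$, one selects a smooth mean curvature flow starting from some approximator $K_0^{\varepsilon, -}$ whose time-$t$ slice admits an interior touching ball at $x$ of radius at least $\alpha_\varepsilon / H_{\partial A}(x)$; the avoidance principle (Proposition \ref{prop ncM weak avoidance}) then places this ball inside $K_t$. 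Taking first the infimum over $A$ with $H_{\partial A}(x) \searrow H(x)$ and then $\varepsilon \to 0$ yields an interior ball of radius $\alpha / H(x)$; the exterior ball is produced symmetrically from $K_0^{\varepsilon, +}$. The main obstacle is constructing a smooth flow whose time-$t$ slice exhibits the correct touching behavior at the prescribed point $x$, and simultaneously controlling the noncollapsedness constant across the singular set; this is precisely where Ilmanen's elliptic regularization \cite{Ilm1, Ilm2} and the Haslhofer--Kleiner stability theory do the heavy lifting, ensuring the pinching $Z \le 0$ passes robustly to the Hausdorff limit.
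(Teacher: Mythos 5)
The paper itself gives no proof of this theorem: it is quoted from the literature (Andrews \cite{And} for the smooth case, Haslhofer--Kleiner \cite{HK} for the level-set flow), so there is no in-paper argument to compare against, and your proposal has to stand on its own. As such it has a genuine gap, located exactly where the theorem becomes nontrivial. Your reduction to Andrews' two-point function argument, via uniqueness of the flow before the first singular time (Proposition \ref{prop ncM uniqueness}), only covers $t<T_{1}$, as you say. For $t\ge T_{1}$ your scheme requires ``a smooth mean curvature flow starting from some approximator $K_{0}^{\varepsilon,-}$ whose time-$t$ slice admits an interior touching ball at $x$.'' No such smooth flow exists in general: the flows of the inner and outer approximators develop singularities at essentially the same times as the flow of $K_{0}$ (the inner one no later, by inclusion/avoidance), so Andrews' theorem cannot be applied to them on $[0,t]$ either. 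The approximation does not buy you smoothness past $T_{1}$, which is the whole difficulty.

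There is a second, related problem even if one grants some smooth approximating flows: their boundaries are strictly disjoint from $\partial K_{t}$, so an interior ball of $K_{t}^{\varepsilon,-}$ touches at a point different from the prescribed $x\in\partial K_{t}$ and its radius is governed by the mean curvature of $\partial K_{t}^{\varepsilon,-}$ at that point, not by $H(x)$. To pass to the limit you would need Hausdorff convergence of the boundaries \emph{together with} convergence of the relevant mean curvatures to the viscosity mean curvature $H(x)$, and the latter is a regularity/stability statement that does not come for free (the viscosity mean curvature of a set flow can behave badly under Hausdorff limits). This is precisely the content of the Haslhofer--Kleiner argument: they prove an Andrews-type noncollapsing estimate directly for the smooth solutions of Ilmanen's elliptic regularization \cite{Ilm2} and then show the estimate survives the limit to the level-set flow. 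Your closing sentence, which defers the ``heavy lifting'' to elliptic regularization and Haslhofer--Kleiner stability theory, therefore does not patch the gap; it invokes the proof of the theorem itself as if it were an auxiliary tool. To make the argument complete you would either have to carry out the noncollapsing estimate for the regularized solutions and the limiting argument, or simply cite \cite{HK} for the whole statement, as the paper does.
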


\begin{thm}
\label{thm haslhofers curvature bound} For any $\alpha>0$, there
are $\rho=\rho(\alpha)>0$ and $C_{l}=C_{l}(\alpha)$ ($l=0,1,2,\ldots$)
with the following property. If $(M_{t})_{t\in I}$ is an $\alpha$-noncollapsed
mean curvature flow in a parabolic ball $P(x,t,r)\subset\R^{n+1}\times\R$
with $x\in M_{t}$ and $H(x,t)\le r^{-1}$, then 
\begin{equation}
\sup_{P(x,t,\rho\,r)}|\nabla^{l}A|\le C_{l}\,r^{-(l+1)}\;.
\end{equation}
\end{thm}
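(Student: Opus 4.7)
The plan is to follow the approach of \cite{HK}. By the parabolic rescaling $(y,s)\mapsto(r^{-1}(y-x),r^{-2}(s-t))$, which preserves both mean curvature flow and $\alpha$-noncollapsedness, I reduce to the case $r=1$. Thus it suffices to produce $\rho>0$ and constants $C_l$ such that any $\alpha$-noncollapsed flow on $P(0,0,1)$ with $H(0,0)\le1$ satisfies $|\nabla^l A|\le C_l$ on $P(0,0,\rho)$.

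The first step is the pointwise estimate of Andrews: because at every point the interior and exterior balls of radius $\alpha/H$ bound the principal curvatures, one gets $|A|\le C(n,\alpha)\,H$ wherever $H>0$. Hence it is enough to bound $H$ uniformly in a smaller parabolic ball; the bound on $|A|$ follows immediately. To obtain the $H$-bound, I argue by contradiction. If no uniform $\rho$ works, there are flows $(M^k_s)$ and points $(y_k,s_k)\in P(0,0,\rho_k)$ with $\rho_k\to0$ and $Q_k:=H(y_k,s_k)\to\infty$. Parabolically rescaling each flow by $Q_k$ about $(y_k,s_k)$ produces $\alpha$-noncollapsed flows $\tilde M^k$ with $H(0,0)=1$, defined on parabolic balls of radius $Q_k(1-\rho_k)\to\infty$. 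By the compactness theorem for $\alpha$-noncollapsed flows of \cite{HK} (whose heart is the one-sided minimization property together with Andrews' estimate, which upgrades weak convergence to smooth convergence where $H$ is bounded), a subsequence converges locally smoothly to an eternal $\alpha$-noncollapsed mean curvature flow $\tilde M^{\infty}$ with $H(0,0)=1$. On the other hand the hypothesis $H\le1$ on $P(0,0,1)$ rescales to $H\le Q_k^{-1}\to0$ at every point of bounded parabolic distance (since the rescaled observation points drift toward the origin of the unrescaled ball), forcing $H\equiv0$ on $\tilde M^{\infty}$ and contradicting $H(0,0)=1$.

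Once $H$, and therefore $|A|$, is bounded on $P(0,0,\rho)$, the derivative bounds $|\nabla^l A|\le C_l$ are standard: the local interior estimates of Ecker--Huisken for smooth mean curvature flows of bounded curvature upgrade a $C^0$ bound on $A$ on a parabolic ball to bounds on all covariant derivatives on a slightly smaller one. Alternatively, repeating the blow-up argument with the quantity $|\nabla^l A|^{2/(l+1)}+|A|+\dots$ in the role of $H$ yields the same conclusion via smooth compactness.

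The main obstacle is the compactness theorem for weak $\alpha$-noncollapsed flows used in the contradiction step: one needs that along a sequence of $\alpha$-noncollapsed flows with uniformly bounded $H$ at the origin, a subsequence converges smoothly on compact sets to a smooth limit flow. This is the technical core of \cite{HK} and relies crucially on Andrews' estimate $|A|\le C(n,\alpha)H$ (which turns $H$-control into full curvature control) and on a local regularity theorem for weak mean curvature flows that enjoy the $\alpha$-noncollapsing property. Everything else in the argument is bookkeeping via parabolic rescaling and standard interior estimates.
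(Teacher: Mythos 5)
Your reduction steps are fine (parabolic rescaling to $r=1$, Andrews' pointwise estimate $|A|\le C(n,\alpha)H$ to reduce to an $H$-bound, and Ecker--Huisken interior estimates for the higher derivatives), but the central blow-up step has a genuine gap. First, you misquote the hypothesis: the theorem assumes $H(x,t)\le r^{-1}$ only at the \emph{center point} $(x,t)$ of the parabolic ball, not $H\le r^{-1}$ throughout $P(x,t,r)$. Your final contradiction ("the hypothesis $H\le 1$ on $P(0,0,1)$ rescales to $H\le Q_k^{-1}\to 0$, forcing $H\equiv 0$ on the limit") therefore uses an assumption that is not there; with control of $H$ at a single point, and that point possibly escaping to infinity in the rescaled picture (you have no control on $Q_k\rho_k$), no such conclusion can be drawn. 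Second, the compactness statement you invoke is circular at this stage: smooth subconvergence of the rescaled flows $\tilde M^k$ with only $H(0,0)=1$ at the basepoint requires local curvature bounds near the basepoint, which is precisely the local curvature estimate being proven; in \cite{HK} the global convergence/compactness theorem is a \emph{consequence} of this estimate, not an input to it. Without a Perelman-type point-picking argument (which you do not perform) there is no a priori curvature bound on any neighborhood of the rescaled basepoints.

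The paper's (and Haslhofer--Kleiner's) argument is arranged exactly so as to avoid this: one rescales about the \emph{center} point, where the hypothesis $H(x,t)\le r^{-1}$ becomes $H(0,0)\to 0$, while the assumed bad points with $|A|\ge 1$ sit at bounded rescaled parabolic distance. Then $\alpha$-noncollapsedness squeezes the hypersurfaces between interior and exterior balls of radii $\alpha/H(0,0)\to\infty$, which is what makes the primitive tool --- the halfspace convergence theorem, based on one-sided minimization and a local regularity theorem rather than on a general compactness theorem --- applicable: the flows converge locally smoothly to a static hyperplane, so $|A|\to 0$ on compact sets, contradicting $|A|\ge 1$ at the bad points. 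So the blow-up must be centered where the hypothesis forces the curvature to \emph{vanish} in the limit, not normalized to $H=1$ at the bad points; as written, your contradiction step does not go through.
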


\begin{proof}[Idea of the proof]
 The theorem is proven with a blow-up argument. One considers a rescaled
sequence of counterexamples with $\sup|A|\ge1$ and $H(0)\to0$. Using
the $\alpha$-noncollapsedness, one can show that the sequence converges
locally smoothly to a hyperplane. This is the halfspace convergence
result of \cite{HK}. Contradiction with $\sup|A|\ge1$. 
\end{proof}
\begin{cor}
\label{cor alpha-nc curvature bound} For an $\alpha$-noncollapsed
mean curvature flow $(M_{t})_{t\in(0,T)}$, there are constants $C_{l}=C_{l}(\alpha)$
($l=0,1,2,\ldots$) such that 
\begin{equation}
|\nabla^{l}A|^{2}(x,t)\le C_{l}\,\big(t^{-1}+H^{2}(x,t)\big)^{l+1}\;.
\end{equation}
In particular, for $t_{0}>0$ there are constants $C_{l}=C_{l}(\alpha,t_{0})$
such that 
\begin{equation}
\frac{|\nabla^{l}A|^{2}}{(1+H^{2})^{l+1}}\le C_{l}(\alpha,t_{0})\qquad\text{holds for }t\ge t_{0}\;.
\end{equation}
\end{cor}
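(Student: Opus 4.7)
The strategy is a direct application of Theorem \ref{thm haslhofers curvature bound} at a scale $r$ chosen to simultaneously dominate $H^{-1}$ and $\sqrt{t}$. The natural choice, which yields exactly the right-hand side of the claim, is
\[
r = r(x,t) \coloneqq \bigl( t^{-1} + H^2(x,t) \bigr)^{-1/2}.
\]

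With this choice I verify the two hypotheses of Theorem \ref{thm haslhofers curvature bound}. First, $r^{-2} = t^{-1} + H^2(x,t) \ge H^2(x,t)$, so $H(x,t) \le r^{-1}$. Second, $r^2 = (t^{-1}+H^2)^{-1} \le t$, so the backward-in-time extent $t-r^2$ of the parabolic ball $P(x,t,r)$ is nonnegative and the $\alpha$-noncollapsed flow is defined on all of $P(x,t,r)$. Theorem \ref{thm haslhofers curvature bound} then gives the pointwise bound
\[
|\nabla^l A|(x,t) \le \sup_{P(x,t,\rho r)} |\nabla^l A| \le C_l \, r^{-(l+1)} = C_l \bigl(t^{-1} + H^2(x,t)\bigr)^{(l+1)/2},
\]
and squaring yields the first assertion with constant $C_l^2$ (absorbed into $C_l$).

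For the second assertion, fix $t_0 > 0$ and restrict to $t \ge t_0$. Then $t^{-1} \le t_0^{-1}$, and consequently
\[
t^{-1} + H^2 \le t_0^{-1} + H^2 \le \max\{1, t_0^{-1}\}\,\bigl(1+H^2\bigr),
\]
so raising to the $(l+1)$-st power and dividing the first estimate by $(1+H^2)^{l+1}$ gives a bound depending only on $\alpha$, $l$, and $t_0$, as claimed. The entire argument is a two-line computation once the scale is chosen correctly; there is no real obstacle, only the observation that balancing $t^{-1}$ against $H^2$ inside the square root of $r^{-1}$ is what makes both hypotheses of Theorem \ref{thm haslhofers curvature bound} hold simultaneously.
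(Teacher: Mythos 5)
Your proof is correct and follows exactly the paper's argument: the same choice of scale $r = (t^{-1}+H^{2})^{-1/2}$, the same two verifications that the hypotheses of Theorem \ref{thm haslhofers curvature bound} hold, and the same squaring step. Your explicit derivation of the second assertion is the obvious elementary consequence that the paper leaves implicit.
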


\begin{proof}
For $x\in M_{t}$ we set $r\coloneqq\big(t^{-1}+H^{2}(x,t)\big)^{-1/2}$.
Then there hold $r\le H(x,t)^{-1}$ and $r\le t^{1/2}$, which implies
$P(x,t,r)\subset\R^{n+1}\times(0,T)$. By Theorem \ref{thm haslhofers curvature bound}
there are constants $C_{l}(\alpha)$ such that $|\nabla^{l}A|(x,t)\le C_{l}\,r^{-(l+1)}=C_{l}\,\big(t^{-1}+H^{2}(x,t)\big)^{\frac{l+1}{2}}$
($l=0,1,\ldots$) holds. The assertion follows with $C_{l}^{2}$ instead
of $C_{l}$, which is just as fine.
\end{proof}

\lyxaddress{\begin{center}
Wolfgang A.\ Maurer, Fachbereich Mathematik und Statistik, Universität
Konstanz, 78457 Konstanz, Germany\\
e-mail: wolfgang.maurer@uni-konstanz.de
\par\end{center}}

\begin{thebibliography}{10}
\bibitem{And}\emph{Ben Andrews}, Noncollapsing in mean -convex mean
curvature flow, Geom. Topol. \textbf{16} (2012), 1413--1418.

\bibitem{CY}\emph{Bing-Long Chen} and \emph{Le Yin}, Uniqeuness and
pseudolocality theorems of the mean curvature flow, Comm.\ Anal.\ Geom.\ \textbf{15.3}
(2007), 435--490.

\bibitem{CGG}\emph{Yun Gang Chen}, \emph{Yoshikazu Giga}, and \emph{Shun'ichi
Goto}, Uniqueness and existence of viscosity solutions of generalized
mean curvature flow equations, J.\ Differential Geom.\ \textbf{33.3}
(1991), 749--786.

\bibitem{CM}\emph{Tobias H.\ Colding} and \emph{William P.\ Minicozzi,
II.}, Differentiability of the arrival time, arXiv e-prints (2015),
\texttt{arXiv:1501.07899}\textsf{.}

\bibitem{EH}\emph{Klaus Ecker} and \emph{Gerhard Huisken}, Interior
estimates for hypersurfaces moving by mean curvature, Invent.\ Math.\ \textbf{105.3}
(1991), 547--569.

\bibitem{ES}\emph{Lawrence C.\ Evans} and \emph{Joel Spruck}, Motion
of level sets by mean curvature. I, J.\ Differential Geom.\ \textbf{33.3}
(1991), 635--681.

\bibitem{GH}\emph{Michael Gage} and \emph{Richard S.\ Hamilton},
The heat equations shrinking convex plane curves, J.\ Differential
Geom.\  \textbf{23.1} (1986), 69--96.

\bibitem{Gra}\emph{Matthew A.\ Grayson}, The heat equation shrinks
embedded plane curves to round points, J.\ Differential Geom.\ 
\textbf{26.2} (1987), 285--314.

\bibitem{HK}\emph{Robert Haslhofer} and \emph{Bruce Kleiner}, Mean
Curvature Flow of Mean Convex Hypersurfaces, Comm Pure Appl. Math.
\textbf{70.3} (2016), 511--546.

\bibitem{Hui}\emph{Gerhard Huisken}, Nonparametric mean curvature
evolution with boundary conditions, J.\ Differential Equat.\  \textbf{77.2}
(1989), 369--378.

\bibitem{Ilm1}\emph{Tom Ilmanen}, The Level-set Flow on a Manifold,
Proc.\ Symp.\ Pure\ Math.\ \textbf{54.1} (1993), 193--204.

\bibitem{Ilm2}\emph{Tom Ilmanen}, Elliptic regularization and partial
regularity for motion by mean curvature, Mem.\ Amer.\ Math.\ Soc.\ \textbf{108.520}
(1994).

\bibitem{INS}\emph{Tom Ilmanen}, \emph{André Neves}, and \emph{Felix
Schulze}, On short time existence for the planar network flow, J.\ Differential
Geom.\ \textbf{111.1} (2019), 39--89.

\bibitem{IW}\emph{James Isenberg} and\emph{ Haotian Wu}, Mean curvature
flow of noncompact hypersurfaces with Type-II curvature blow-up, J.\ Reine
Angew.\ Math.\ \textbf{754} (2019), 225--251.

\bibitem{IWZ}\emph{James Isenberg}, \emph{Haotian Wu}, and \emph{Zhou
Zhang}, Mean curvature flow of non-compact hypersurfaces with Type-II
curvature blow-up. II, arXiv e-Prints (2019), \texttt{arXiv:1911.07282}\textsf{.}

\bibitem{Lah}\emph{Ananda Lahiri}, Almost graphical hypersurfaces
become graphical under mean curvature flow, arXiv e-Prints (2015),
\texttt{arXiv:1505.00543}\textsf{.}

\bibitem{Mau1}\emph{Wolfgang Maurer}, Shadows of graphical mean curvature
flow, Comm. Anal. Geom. \textbf{29.1} (2021), 183--206

\bibitem{Mau2}\emph{Wolfgang Maurer},\emph{ }Mean curvature flow
of symmetric double graphs only develops singularities on the hyperplane
of symmetry, arXiv e-Prints (2021), \texttt{arXiv:2103.06072}.

\bibitem{SS}\emph{Mariel Sáez Trumper} and \emph{Oliver Schnürer},
Mean curvature flow without singularities, J.\ Differential Geom.\ \textbf{97.3}
(2014), 545--570.
\end{thebibliography}
\end{document}